\DeclarePairedDelimiterX{\set}[1]{\{}{\}}{\setargs{#1}}
\NewDocumentCommand{\setargs}{>{\SplitArgument{1}{;}}m}
{\setargsaux#1}
\NewDocumentCommand{\setargsaux}{mm}
{\IfNoValueTF{#2}{#1} {#1\,\delimsize|\,\mathopen{}#2}}%{#1\:;\:#2}
\DeclarePairedDelimiter\abs{\lvert}{\rvert}
\DeclarePairedDelimiter\ceil{\lceil}{\rceil}
\DeclarePairedDelimiter\floor{\lfloor}{\rfloor}
\DeclarePairedDelimiter\parenv{\lparen}{\rparen}
\DeclarePairedDelimiter\sparenv{\lbrack}{\rbrack}
\newcommand{\cB}{\mathcal{B}}
\newcommand{\cG}{\mathcal{G}}
\newcommand{\bai}{\bar{i}}
\newcommand{\baj}{\bar{j}}
\renewcommand{\leq}{\leqslant}
\renewcommand{\geq}{\geqslant}
\newcommand{\ppmod}[1]{~({\rm mod~}#1)}
\newdefinition{definition}{Definition}
\newdefinition{remark}{Remark}
\newtheorem{theorem}{Theorem}
\newtheorem{example}{Example}
\newtheorem{corollary}{Corollary}
\newtheorem{lemma}{Lemma}
\newproof{proof}{Proof}
\newcommand{\F}{\mathbb{F}}
\newcommand{\R}{\mathbb{R}}
\newcommand{\Z}{\mathbb{Z}}
\newcommand{\ve}{\mathbf{e}}
\newcommand{\vr}{\mathbf{r}}
\newcommand{\vv}{\mathbf{v}}
\newcommand{\vu}{\mathbf{u}}
\newcommand{\vx}{\mathbf{x}}
\newcommand{\vy}{\mathbf{y}}
\newcommand{\vz}{\mathbf{z}}
\newcommand{\va}{\mathbf{a}}
\newcommand{\vb}{\mathbf{b}}
\newcommand{\vc}{\mathbf{c}}
\newcommand{\vd}{\mathbf{d}}
\newcommand{\vf}{\mathbf{f}}
\newcommand{\Zero}{{\mathbf{0}}}
\newcommand{\One}{{\mathbf{1}}}
\DeclareMathOperator{\wt}{wt}
\newcommand{\kp}{k_+}
\newcommand{\km}{k_-}
\newcommand{\BALL}{{\mathcal B}(n,t,\kp,\km)}
\newcommand{\eqdef}{\triangleq}
\newcommand{\splt}{\diamond}
\begin{document}

\title{On Tilings of Asymmetric Limited-Magnitude Balls\tnoteref{t1}}
\tnotetext[t1]{This work was supported in part by an Israel Science Foundation (ISF) grant 270/18.}
\author[1]{Hengjia Wei\corref{cor1}}
\ead{hjwei05@gmail.com}

\author[2]{Moshe Schwartz}
\ead{schwartz@ee.bgu.ac.il}

\cortext[cor1]{Corresponding author}
\address[1]{School of Electrical and Computer Engineering,
  Ben-Gurion University of the Negev,\\
  Beer Sheva 8410501, Israel}
\address[2]{
  School of Electrical and Computer Engineering,
  Ben-Gurion University of the Negev,\\
  Beer Sheva 8410501, Israel}

\begin{abstract}
  We study whether an asymmetric limited-magnitude ball may tile
  $\Z^n$. This ball generalizes previously studied shapes: crosses,
  semi-crosses, and quasi-crosses. Such tilings act as perfect
  error-correcting codes in a channel which changes a transmitted
  integer vector in a bounded number of entries by limited-magnitude
  errors.

  A construction of lattice tilings based on perfect codes in the
  Hamming metric is given. Several non-existence results are proved,
  both for general tilings, and lattice tilings. A complete
  classification of lattice tilings for two certain cases is proved.
\end{abstract}

\begin{keyword}
  Error-correcting codes \sep Tiling \sep Limited-magnitude errors \sep Group splitting
\end{keyword}
%\subclass{ x \and y }
\maketitle

\section{Introduction}

In some applications, information is encoded as a vector of integers,
$\vx\in\Z^n$, most notably, flash memories (e.g., see
\cite{CasSchBohBru10}). Additionally, a common noise affecting these
applications is a limited-magnitude error affecting some of the
entries. Namely, at most $t$ entries are increased by as much as $\kp$
or decreased by as much as $\km$. Thus, for integers $n\geq t\geq 1$, and
$\kp\geq \km\geq 0$, we define the \emph{$(n,t,\kp,\km)$-error-ball}
as
\[\BALL\triangleq \set*{\vx=(x_1,x_2,\ldots,x_n)\in \Z^n ; -\km \leq x_i \leq \kp \text{ and  } \wt(\vx) \leq t  },\]
where $\wt(\vx)$ denotes the Hamming weight of $\vx$. It now follows
that an error-correcting code in this setting is equivalent to a
packing of $\Z^n$ by $\BALL$, and the subject of interest for this
paper, a perfect code is equivalent to a tiling of $\Z^n$ by $\BALL$.
An example of $\cB(3,2,2,1)$ is shown in Fig.~\ref{fig:qc}.

\begin{figure}
  \begin{center}
    \includegraphics{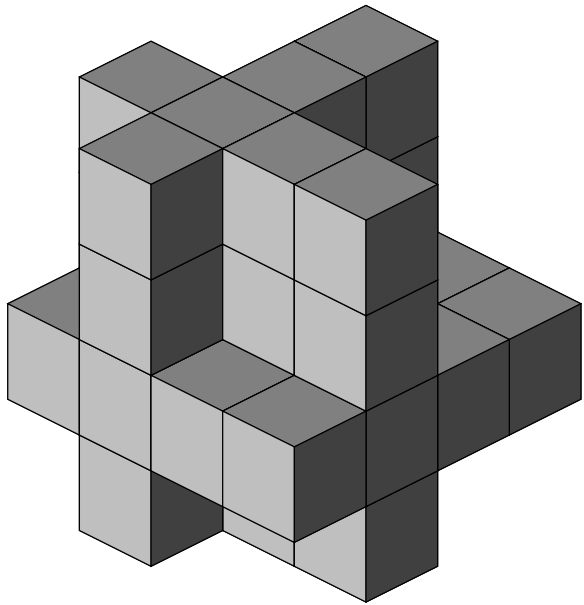}
  \end{center}
  \caption{A depiction of $\cB(3,2,2,1)$ where each point in
    $\cB(3,2,2,1)$ is shown as a unit cube.}
  \label{fig:qc}
\end{figure}

Previous works on tiling these shapes almost exclusively studied the
case of $t=1$. The \emph{cross}, $\cB(n,1,k,k)$, and semi-cross,
$\cB(n,1,k,0)$ have been extensively researched, e.g., see
\cite{Ste84,HamSte84,HicSte86,SteSza94,KloLuoNayYar11} and the many
references therein. This was recently extended to
\emph{quasi-crosses}, $\cB(n,1,\kp,\km)$, in \cite{Sch12}, creating a
flurry of activity on the subject
\cite{YarKloBos13,Sch14,ZhaGe16,ZhaZhaGe17,ZhaGe18,YeZhaZhaGe20}. To
the best of our knowledge, \cite{Ste90} and later \cite{BuzEtz12}, are
the only works to consider $t\geq 2$, by considering a notched cube
(or a ``chair''), which for certain parameters becomes
$\cB(n,n-1,k,0)$. Tilings of these shapes have been constructed in
\cite{Ste90,BuzEtz12}. Additionally, \cite{BuzEtz12} showed that
$\cB(n,n-2,k,0)$, $n\geq 4$, $k\geq 1$, can never lattice-tile $\Z^n$.

The goal of this paper is to study tilings of $\BALL$ for $t\geq
2$. Our main contributions are a construction of lattice tilings from
perfect codes in the Hamming metric, and a sequence of non-existence
results, both for lattice tilings and for general non-lattice
tilings. We use both algebraic techniques and geometric ones. In
particular, we provide a complete classification of lattice tilings
with $\cB(n,2,1,0)$ and $\cB(n,2,2,0)$.

The paper is organized as follows: In Section~\ref{sec:prelim} we
provide the notation used throughout the paper, as well as definitions
and basic results concerning lattice tilings and group splittings. We
construct lattice tilings in Section~\ref{sec:tiling}, and prove
non-existence results in Section~\ref{sec:notiling}. A short
discussion and open questions are given in Section~\ref{sec:conclude}.

\section{Preliminaries}
\label{sec:prelim}

Throughout the paper we let $n$ and $t$ be integers such that $n\geq
t\geq 1$. We further assume $\kp$ and $\km$ are non-negative integers
such that $\kp\geq \km\geq 0$. For integers $a\leq b$ we define
$[a,b]\eqdef\set*{a,a+1,\dots,b}$ and $[a,b]^*\eqdef
[a,b]\setminus\set*{0}$. We use $\Z_m$ to denote the cyclic group of
integers with addition modulo $m$, and $\F_q$ to denote the finite
field of size $q$. Since we shall almost always use just the additive
group of the finite field, when $p$ is a prime we shall sometimes
write $\F_p$ and sometimes $\Z_p$.

A \emph{lattice} $\Lambda\subseteq\Z^n$ is an additive subgroup of
$\Z^n$. A lattice $\Lambda$ may be represented by a matrix
$\cG(\Lambda)\in\Z^{n\times n}$, the span of whose rows (with integer
coefficients) is $\Lambda$. A \emph{fundamental region} of $\Lambda$
is defined as
\[ \set*{ \sum_{i=1}^n c_i\vv_i ; c_i\in\R, 0\leq c_i < 1 },\]
where $\vv_i$ is the $i$-th row of $\cG(\Lambda)$. It is well known
that the volume of the fundamental region is $\abs{\det(\cG(\Lambda))}$, and
is independent of the choice of $\cG(\Lambda)$.

We say $\cB\subseteq\Z^n$ \emph{packs} $\Z^n$ by $\Lambda\subseteq\Z^n$, if
the translates of $\cB$ by elements from $\Lambda$ do not intersect,
namely, for all $\vv,\vv'\in\Lambda$, $\vv\neq\vv'$,
\[ (\vv+\cB)\cap(\vv'+\cB)=\varnothing.\]
We say $\cB$ \emph{covers} $\Z^n$ by $\Lambda$ if
\[ \bigcup_{\vv\in\Lambda} (\vv+\cB) = \Z^n.\]
If $\cB$ both packs and covers $\Z^n$ by $\Lambda$, then we say $\cB$
\emph{tiles} $\Z^n$ by $\Lambda$. It is well known that if $\cB$ packs
$\Z^n$ by $\Lambda$, and $\abs*{\cB}=\abs{\det(\cG(\Lambda))}$, then $\cB$
tiles $\Z^n$ by $\Lambda$.

\subsection{Lattice Tiling and Group Splitting}

Lattice tiling of $\Z^n$ with $\cB(n,t,\kp,\km)$, in connection with
group splitting, has a long history when $t=1$ (e.g., see
\cite{Ste67}), called lattice tiling by crosses if $\kp=\km$ (e.g.,
\cite{Ste84}), semi-crosses when $\km=0$ (e.g.,
\cite{Ste84,HamSte84,HicSte86}), and quasi-crosses when
$\kp\geq\km\geq 0$ (e.g., \cite{Sch12,Sch14}). For an excellent
treatment and history, the reader is referred to~\cite{SteSza94} and
the many references therein. Other variations, keeping $t=1$
include~\cite{Tam98,Tam05}. More recent results may be found
in~\cite{YeZhaZhaGe20} and the references therein.

Since we are interested in codes that correct more than one error,
namely, $t\geq 2$, an extended definition of group splitting is
required.

\begin{definition}
  \label{def:split}
  Let $G$ be a finite Abelian group, where $+$ denotes the group
  operation. For $m\in\Z$ and $g\in G$, let $mg$ denote $g+g+\dots+g$
  (with $m$ copies of $g$) when $m>0$, which is extended in the
  natural way to $m\leq 0$. Let $M\subseteq\Z\setminus\set*{0}$ be a
  finite set, and $S=\set*{s_1,s_2,\dots,s_n}\subseteq G$. We say the
  set $M$ $t$-splits $G$ with splitter set $S$, denoted
  \[G = M\splt_t S\]
  if the following two conditions hold:
  \begin{enumerate}
  \item
    The elements $\ve\cdot (s_1,\dots,s_n)$, where $\ve\in
    (M\cup\set{0})^n$ and $1\leq \wt(\ve)\leq t$, are all distinct and
    non-zero in $G$.
  \item
    For every $g\in G$ there exists a vector $\ve\in
    (M\cup\set{0})^n$, $\wt(\ve)\leq t$, such that $g=\ve\cdot
    (s_1,\dots,s_n)$.
  \end{enumerate}
\end{definition}

Intuitively, $G=M\splt_t S$ means that the non-trivial linear
combinations of elements from $S$, with at most $t$ non-zero
coefficients from $M$, are distinct and give all the non-zero elements
of $G$ exactly once. We note that when $t=1$, this definition
coincides with the definition of splitting used in previous papers.

The following two theorems show the equivalence of $t$-splittings and
lattice tilings, summarizing Lemma 3, Lemma 4, and Corollary 1
in \cite{BuzEtz12}. They generalize the treatment for $t=1$ in previous
works (e.g., see \cite{SteSza94}).

\begin{theorem}[Lemma 4 and Corollary 1 in \cite{BuzEtz12}]
  \label{th:lattotile}
  Let $G$ be a finite Abelian group, $M\eqdef [-\km,\kp]^*$, and
  $S=\set*{s_1,\dots,s_n}\subseteq G$, such that $G = M\splt_t
  S$. Define $\phi:\Z^n\to G$ as
  $\phi(\vx)\eqdef\vx\cdot(s_1,\dots,s_n)$ and let
  $\Lambda\eqdef\ker\phi$ be a lattice. Then $\cB(n,t,\kp,\km)$ tiles
  $\Z^n$ by $\Lambda$.
\end{theorem}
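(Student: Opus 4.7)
The plan is to exploit the natural identification between the ball $\cB(n,t,\kp,\km)$ and the set of coefficient vectors that appear in Definition~\ref{def:split}. Since $M\cup\set{0}=[-\km,\kp]$, we have
\[
\cB(n,t,\kp,\km)=\set*{\ve\in (M\cup\set{0})^n ; \wt(\ve)\le t},
\]
so conditions~1 and~2 of the $t$-splitting are statements about the map $\phi$ restricted to the ball.

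First I would note that $\phi\colon \Z^n\to G$ is a group homomorphism (each coordinate contributes a scalar multiple of some $s_i\in G$), so $\Lambda=\ker\phi$ is indeed a subgroup of $\Z^n$, i.e., a sublattice. This also makes translations $\vv+\cB(n,t,\kp,\km)$ for $\vv\in\Lambda$ well-defined.

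Next I would verify packing. Suppose $\vv,\vv'\in\Lambda$ with $\vv\ne\vv'$ and assume for contradiction that $(\vv+\cB)\cap(\vv'+\cB)\ne\varnothing$. Then there exist distinct $\vx,\vy\in\cB(n,t,\kp,\km)$ with $\vv-\vv'=\vy-\vx$, hence $\phi(\vx)=\phi(\vy)$. But $\vx,\vy$ lie in $(M\cup\set{0})^n$ with Hamming weight at most $t$, so condition~1 of the splitting forces $\vx=\vy$ (using that $0$ is the unique preimage of $0\in G$ inside the ball, since all other coefficient vectors of weight between $1$ and $t$ give nonzero images), a contradiction. Thus the translates are pairwise disjoint.

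For the covering part I would invoke condition~2 directly: given any $\vz\in\Z^n$, set $g=\phi(\vz)\in G$; by condition~2 there exists $\vb\in(M\cup\set{0})^n$ with $\wt(\vb)\le t$ (i.e., $\vb\in\cB(n,t,\kp,\km)$) such that $\phi(\vb)=g$. Then $\phi(\vz-\vb)=0$, so $\vz-\vb\in\ker\phi=\Lambda$, and therefore $\vz\in(\vz-\vb)+\cB(n,t,\kp,\km)\subseteq \Lambda+\cB(n,t,\kp,\km)$. Combining this with the packing property yields that $\cB(n,t,\kp,\km)$ tiles $\Z^n$ by $\Lambda$. As a sanity check, since $\phi$ is a surjective homomorphism with kernel $\Lambda$ and $\phi|_{\cB}$ is a bijection onto $G$, one obtains $\abs{\cB(n,t,\kp,\km)}=\abs{G}=[\Z^n:\Lambda]=\abs{\det\cG(\Lambda)}$, consistent with the packing-to-tiling criterion recalled in Section~\ref{sec:prelim}.

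There is no substantive obstacle here: the theorem is essentially a translation of Definition~\ref{def:split} into geometric language. The only care required is to match the two conditions of the splitting with the packing and covering requirements, and to be precise about the identification of $\cB(n,t,\kp,\km)$ with the set of admissible coefficient vectors.
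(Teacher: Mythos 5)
Your proof is correct: the identification of $\cB(n,t,\kp,\km)$ with $\set*{\ve\in(M\cup\set{0})^n ; \wt(\ve)\le t}$, the packing argument via condition~1 of Definition~\ref{def:split} (including the observation that $\Zero$ is the only element of the ball mapping to $0\in G$), and the covering argument via condition~2 are exactly what is needed. The paper itself states this theorem without proof, importing it from \cite{BuzEtz12}, and your argument is the standard one underlying that equivalence, so there is nothing to fault and no genuinely different route being taken.
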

%% \begin{proof}
%%   Denote $\cB\eqdef\cB(n,t,\kp,\km)$. We first prove packing. Assume
%%   to the contrary that there exist $\vv,\vv'\in\Lambda$,
%%   $\vv\neq\vv'$, and $\ve,\ve'\in\cB$, such that
%%   $\vv+\ve=\vv'+\ve'$. Taking $\phi$ on both sides and remembering
%%   that $\Lambda=\ker\phi$ we obtain $\phi(\ve)=\phi(\ve')$. Since the
%%   entries of $\ve,\ve'$ are from $[-\km,\kp]$, and their weight is at
%%   most $t$, by the definition of $t$-splitting we must have
%%   $\ve=\ve'$. However, this implies $\vv=\vv'$, a contradiction.

%%   We now show covering. Assume $\vx\in\Z^n$. If $\vx\in\ker\phi$ then
%%   obviously it is covered by some translate of $\cB$ by
%%   $\Lambda$. Otherwise, $\vx\not\in\ker\phi$, and by the properties of
%%   $t$-splitting there exists a (unique) vector $\ve\in\cB$ such that
%%   $\phi(\vx)=\phi(\ve)$. Then $\vv\eqdef \vx-\ve\in\Lambda$, and $\vx$
%%   is covered by the translate $\vv+\cB$.  \qed\end{proof}

\begin{theorem}[Lemma 3 and Corollary 1 in \cite{BuzEtz12}]
  \label{th:tiletolat}
  Let $\Lambda\subseteq\Z^n$ be a lattice, and assume
  $\cB(n,t,\kp,\km)$ tiles $\Z^n$ by $\Lambda$. Then there exists a
  finite Abelian group $G$ and $S=\set*{s_1,s_2,\dots,s_n}\subseteq G$
  such that $G=M\splt_t S$, where $M\eqdef [-\km,\kp]^*$.
\end{theorem}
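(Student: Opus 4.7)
The plan is to read the splitting off directly from the quotient of $\Z^n$ by $\Lambda$. Concretely, I would set
\[G \eqdef \Z^n/\Lambda, \qquad s_i \eqdef \ve_i+\Lambda \in G \text{ for } i=1,\dots,n,\]
where $\ve_1,\dots,\ve_n$ is the standard basis of $\Z^n$, and then take $S=\{s_1,\dots,s_n\}$. The first thing to check is that $G$ is a finite Abelian group. Since $\cB(n,t,\kp,\km)$ is finite and tiles $\Z^n$ by $\Lambda$, the lattice $\Lambda$ must have finite index $\abs{\cB(n,t,\kp,\km)}$ in $\Z^n$; in particular $\Lambda$ is full rank and $G$ is finite Abelian.

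With these choices, for any $\ve=(e_1,\dots,e_n)\in\Z^n$,
\[\ve\cdot(s_1,\dots,s_n) = \sum_{i=1}^n e_i(\ve_i+\Lambda) = \ve+\Lambda \in G.\]
Thus the elements $\ve\cdot(s_1,\dots,s_n)$ with $\ve\in (M\cup\{0\})^n$ and $\wt(\ve)\leq t$ are exactly the cosets of the points of $\cB(n,t,\kp,\km)$ in $G$, and the weight-$0$ vector corresponds to the identity coset.

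Now I would translate the two tiling conditions into the two clauses of Definition~\ref{def:split}. The packing condition says that distinct $\vb,\vb'\in\cB(n,t,\kp,\km)$ satisfy $\vb-\vb'\notin\Lambda$, which is exactly the statement that distinct vectors in $\cB(n,t,\kp,\km)$ yield distinct elements in $G$; restricting to $\wt(\ve)\geq 1$ further excludes the identity, giving clause (1). The covering condition says that every $\vx\in\Z^n$ lies in $\vv+\cB(n,t,\kp,\km)$ for some $\vv\in\Lambda$, so $\vx+\Lambda=\vb+\Lambda$ for some $\vb\in\cB(n,t,\kp,\km)$; writing $\vb=\ve\in (M\cup\{0\})^n$ with $\wt(\ve)\leq t$ then gives $g=\vx+\Lambda=\ve\cdot(s_1,\dots,s_n)$, yielding clause (2).

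There is no real obstacle here; the only subtle point is the preliminary observation that $\Lambda$ is full-rank so that $G$ is finite, which is forced by the fact that $\abs{\cB(n,t,\kp,\km)}$ equals the index $[\Z^n:\Lambda]$. Once $G$ is known to be finite, the verification of Definition~\ref{def:split} is a direct repackaging of the packing and covering halves of the tiling hypothesis.
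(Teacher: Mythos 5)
Your proof is correct: taking $G=\Z^n/\Lambda$ with $s_i=\ve_i+\Lambda$ and translating packing into clause (1) and covering into clause (2) of Definition~\ref{def:split} is exactly the standard argument, and your preliminary observation that tiling forces $[\Z^n:\Lambda]=\abs{\cB(n,t,\kp,\km)}<\infty$ correctly handles the only subtlety (finiteness of $G$). The paper itself states this theorem without proof, citing Lemma~3 and Corollary~1 of \cite{BuzEtz12}, and your quotient-group construction is the same route taken there.
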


\section{Construction of Lattice Tilings}
\label{sec:tiling}

In this section we describe a construction for tilings with
$\BALL$. The method described here takes a linear perfect code in the
well known and extensively studied Hamming metric, and uses it to
construct the tiling. The obvious downside to this method is the fact
that very few perfect codes exist in the Hamming metric (see
\cite{MacSlo78} for more on perfect codes).

\begin{theorem}
  \label{th:perfectcode}
  In the Hamming metric space, let $C$ be a perfect linear
  $[n,k,2t+1]$ code over $\F_p$, with $p$ a prime. If $\kp+\km+1=p$,
  then
  \[ \Lambda\eqdef \set*{ \vx\in\Z^n ; (\vx\bmod p)\in C}\]
  is a lattice, and $\cB(n,t,\kp,\km)$ lattice-tiles $\Z^n$ by
  $\Lambda$.
\end{theorem}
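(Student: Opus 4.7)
The plan is to apply Theorem~\ref{th:lattotile} with a carefully chosen group and splitter set derived from the code $C$. Specifically, I would take $G \eqdef \F_p^n/C$ as a finite Abelian group, and let $s_i \in G$ be the coset of the $i$-th standard basis vector $\ve_i \in \F_p^n$ modulo $C$. Setting $S = \set*{s_1,\dots,s_n}$, the goal reduces to showing that $G = M \splt_t S$ where $M = [-\km,\kp]^*$; once this is established, Theorem~\ref{th:lattotile} immediately produces the desired lattice tiling, and one only has to verify that the kernel $\Lambda' \eqdef \ker \phi$ of the homomorphism $\phi(\vx) = \sum_i x_i s_i$ coincides with the claimed $\Lambda$.

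The crucial preliminary observation, which I would isolate first, is that the hypothesis $\kp + \km + 1 = p$ forces the reduction map $[-\km,\kp] \to \F_p$ to be a bijection. This bijection sends $0$ to $0$, so nonzero entries go to nonzero entries and the integer Hamming weight of any $\ve \in (M\cup\set*{0})^n$ equals the $\F_p$-Hamming weight of its reduction. In particular, in the $t$-splitting definition we may freely identify $\ve \cdot (s_1,\dots,s_n) \in G$ with the coset of $\ve \bmod p$ in $\F_p^n/C$.

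With this identification, the two splitting conditions become pure statements about $C$. For condition~(1), if two distinct vectors $\ve,\ve' \in (M\cup\set*{0})^n$ with Hamming weight at most $t$ yield the same coset, then $\ve - \ve' \bmod p$ is a nonzero codeword of Hamming weight at most $2t$, contradicting the minimum distance $2t+1$; the same argument with $\ve'=\Zero$ gives nonvanishing. For condition~(2), perfectness of $C$ says that every vector of $\F_p^n$ lies within Hamming distance $t$ of a unique codeword, equivalently every coset of $C$ contains a unique representative of Hamming weight at most $t$; pulling this representative back through the bijection $[-\km,\kp] \to \F_p$ gives the required $\ve \in (M\cup\set*{0})^n$ with $\wt(\ve) \le t$.

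Finally, Theorem~\ref{th:lattotile} yields a lattice tiling by $\Lambda' = \ker\phi$, and $\vx \in \ker\phi$ is equivalent to $\vx \bmod p \in C$, so $\Lambda' = \Lambda$; in particular $\Lambda$ is a subgroup of $\Z^n$, i.e.\ a lattice. I do not anticipate any serious obstacle: the main step is the mental shift from the ``integer'' formulation of the $\BALL$ (with entries in $[-\km,\kp]$) to the ``modular'' formulation over $\F_p$, justified entirely by the size hypothesis $\kp+\km+1=p$. The only point requiring care is to confirm that Hamming weight is preserved under the reduction, since everything else then follows directly from the standard perfect-code argument.
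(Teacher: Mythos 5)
Your proof is correct, but it takes a genuinely different route from the paper's. You reduce the theorem to exhibiting a $t$-splitting $\F_p^n/C = M \splt_t S$ with $S$ the cosets of the standard basis vectors, and then invoke Theorem~\ref{th:lattotile}; the paper instead proves packing and covering of $\Z^n$ by $\Lambda$ directly, with no appeal to the splitting machinery. The mathematical content is essentially the same in both cases --- the minimum distance $2t+1$ rules out two translates meeting (your condition (1), the paper's packing step), and perfectness supplies covering (your condition (2), the paper's covering step), with the hypothesis $\kp+\km+1=p$ used in both to identify $[-\km,\kp]$ bijectively with $\F_p$ and, crucially, to see that a nonzero integer vector with entries in $[-(\kp+\km),\kp+\km]$ cannot reduce to $\Zero$ modulo $p$. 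What your approach buys is that the equivalent group splitting is produced explicitly as part of the proof, which is exactly what the paper reconstructs by hand in Examples~\ref{ex:ex2}--\ref{ex:ex4}; what the paper's direct approach buys is self-containedness (it does not lean on Theorem~\ref{th:lattotile}) and a cleaner symmetry with the converse Theorem~\ref{th:revperfectcode}. The one point you flag as needing care --- that reduction modulo $p$ preserves Hamming weight on $(M\cup\set{0})^n$ --- is indeed the right thing to check, and your justification via the bijection $[-\km,\kp]\to\F_p$ sending $0$ to $0$ is exactly right; just be sure to note separately that for the \emph{difference} $\ve-\ve'$ the entries live in the larger range $[-(\kp+\km),\kp+\km]$, where injectivity into $\F_p$ fails but the preimage of $0$ is still only $\set{0}$, which is all that condition (1) needs.
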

\begin{proof}
  Directly from its definition, $\Lambda$ is closed under addition and
  under multiplication by integers. Thus, $\Lambda$ is a
  lattice. Denote $\cB\eqdef\cB(n,t,\kp,\km)$, and we now prove $\cB$
  tiles $\Z^n$ by $\Lambda$.

  To show packing, assume $\vv+\ve=\vv'+\ve'$, for some
  $\vv,\vv'\in\Lambda$ and $\ve,\ve'\in\cB$. But then
  $\ve-\ve'=\vv'-\vv\in\Lambda$, and by the definition of $\Lambda$,
  also $\ve''\eqdef ((\ve-\ve')\bmod p)\in C$. We note that
  $\wt(\ve)\leq t$ and $\wt(\ve')\leq t$, hence $\wt(\ve'')\leq
  2t$. By the minimum distance of $C$ this implies that
  $\ve''=\Zero$. Now, since each entry of $\ve-\ve'$ is in the range
  $[-(\kp+\km),\kp+\km]$, and since $\kp+\km+1=p$, we necessarily have
  that $\ve-\ve'=\Zero$, which in turn implies $\vv-\vv'=\Zero$. It
  follows that translates of $\cB$ by $\Lambda$ pack $\Z^n$.

  To show covering, let $\vx\in\Z^n$ be any integer vector. Then
  $\vx'\eqdef(\vx\bmod p)\in\F_p^n$. Since $C$ is a perfect code,
  there exists $\vv'\in C$ and $\ve'\in\F_p^n$, $\wt(\ve')\leq t$,
  such that $\vx'\equiv \vv'+\ve'\pmod{p}$. Since $\kp+\km+1=p$, there
  exists $\ve\in\cB$ such that $\ve\bmod p=\ve'$. But then
  $\vx-\ve\equiv \vv'\pmod{p}$ and by definition
  $\vx-\ve\in\Lambda$. Hence, the translates of $\cB$ by $\Lambda$
  cover $\Z^n$.  \qed\end{proof}

\begin{example}
  Take the $[\frac{p^m-1}{p-1},\frac{p^m-1}{p-1}-m,3]$ $p$-ary Hamming
  code ($p$ a prime), together with Theorem~\ref{th:perfectcode}, to
  obtain a tiling of $\Z^{(p^m-1)/(p-1)}$ by
  $\cB(\frac{p^m-1}{p-1},1,\kp,\km)$, where $\kp+\km+1=p$. This
  particular tiling was already described in~\cite{Sch12} together
  with the lattice generator matrix and equivalent splitting.
\end{example}

\begin{example}
  \label{ex:ex2}
  If we use Theorem~\ref{th:perfectcode} with the perfect binary
  linear $[2t+1,1,2t+1]$ repetition code, we obtain a lattice tiling
  of $\Z^{2t+1}$ by $\cB(2t+1,t,1,0)$. The lattice is spanned by
  \[ \cG=\begin{pmatrix}
    1 & 1 & 1 & \dots  & 1 \\
      & 2 &   &        &   \\
      &   & 2 &        &   \\
      &   &   & \ddots &   \\
      &   &   &        & 2
  \end{pmatrix}.\]
  When viewed as a splitting, the additive group $\F_2^{2t}$ is
  $t$-split as $\F_2^{2t}=\set{1}\splt_t S$, where $S=\set{\ve_i ;
    1\leq i\leq 2t}\cup\set{\One}$, and where $\ve_i$ is the $i$-th
  unit vector of length $2t$.
\end{example}

\begin{example}
  Again using Theorem~\ref{th:perfectcode} with the $[23,12,7]$ binary
  Golay code, we obtain a lattice tiling of $\Z^{23}$ by
  $\cB(23,3,1,0)$. The lattice $\Lambda$ is spanned by
  \[\cG=\begin{pmatrix}
  I_{12} & G_b \\
  \Zero & 2 I_{11}
  \end{pmatrix},\]
  where $\begin{pmatrix} I_{12} & G_b \end{pmatrix}$ is a generator
  matrix of the $[23,12,7]$ binary Golay code, and $2I_{11}$ is an
  $11\times 11$ matrix with entries on the diagonal being $2$ and all
  the others being $0$. Now, we look at the corresponding group
  splitting. Since $\Z^{23}$ can be spanned by the matrix
  \[\begin{pmatrix}
  I_{12} & G_b \\
  \Zero & I_{11}
  \end{pmatrix},\]
  the quotient group $\Z^{23} / \Lambda$ is isomorphic to the additive
  group $\F_2^{11}$. Note that
  \[ \begin{pmatrix}
    I_{12} & G_b \\
    \Zero & 2 I_{11}
  \end{pmatrix}
  \begin{pmatrix}
    G_b \\
    I_{11}
  \end{pmatrix}
  \]
  is a $23 \times 11$ all-zero matrix over $\F_2$.  The natural
  homomorphism $\phi: \Z^{23} \to \F_2^{11}$ sends the standard basis
  to the rows of $\begin{pmatrix} G_b \\ I_{11} \end{pmatrix}$. It
  follows that $\F_2^{11}=\set{1}\splt_3 S$, where $S=\set{\ve_i ;
    1\leq i\leq 11}\cup\set{\vr; \vr \text{ is a row of }G_b }$.
\end{example}

\begin{example}\label{ex:ex4}
  Finally, using Theorem~\ref{th:perfectcode} with the $[11,6,5]$
  ternary Golay code, we obtain a lattice tiling of $\Z^{11}$ by
  $\cB(11,2,2,0)$ or $\cB(11,2,1,1)$. The lattice is spanned by 
  \[\cG=\begin{pmatrix}
  I_6 & G_t \\
  \Zero & 3 I_5
  \end{pmatrix},\]
  where $\begin{pmatrix} I_6 & G_t \end{pmatrix}$ is a generator
  matrix of the $[11,6,5]$ ternary Golay code, and $3I_5$ is a
  $5\times 5$ matrix with entries on the diagonal being $3$ and all
  the others being $0$. When viewed as a splitting, the additive group
  $\F_3^{5}$ is $2$-split as $\F_3^{5}=\set{1,2}\splt_2 S$, where
  $S=\set{\ve_i ; 1\leq i\leq 5}\cup\set{\vr; \vr \textup{ is a row of
    }G_t }$.
\end{example}

Theorem~\ref{th:perfectcode} has its dual as well, as shown in the
following theorem.

\begin{theorem}
  \label{th:revperfectcode}
  Assume $\BALL$ lattice-tiles $\Z^n$ by the lattice
  $\Lambda$, with an equivalent $t$-splitting $\F_p^m= M\splt_t S$,
  where $M\eqdef[-\km,\kp]^*$, $p$ is a prime, and $p=\kp+\km+1$. Then
  $\Lambda\cap\F_p^n$ is a perfect linear $[n,k,2t+1]$ code over
  $\F_p$ in the Hamming metric space.
\end{theorem}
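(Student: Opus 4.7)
The plan is to interpret $\Lambda\cap\F_p^n$ as the image of $\Lambda$ under reduction mod $p$, so one first verifies $p\Z^n\subseteq\Lambda$. Let $\phi:\Z^n\to\F_p^m$ be the splitting homomorphism $\phi(\vx)=\vx\cdot(s_1,\dots,s_n)$, so that $\Lambda=\ker\phi$. Because $\F_p^m$ has exponent $p$, $\phi(p\ve_i)=ps_i=0$, hence $p\Z^n\subseteq\Lambda$ and $\phi$ factors as $\bar\phi\circ\pi$ through the reduction $\pi:\Z^n\to\F_p^n$. Setting $C\eqdef\pi(\Lambda)=\ker\bar\phi$ gives a linear subspace of $\F_p^n$. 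Since $M\cup\set{0}=[-\km,\kp]$ is a complete set of residues mod $p=\kp+\km+1$ and condition (ii) of the $t$-splitting realizes every element of $\F_p^m$, the map $\bar\phi$ is surjective, so $|C|=p^{n-m}$ and $C$ is an $[n,n-m]$ linear code.

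The heart of the proof is showing $d(C)\geq 2t+1$. Given a nonzero $\vc\in C$ of Hamming weight $w\leq 2t$, lift each coordinate canonically into $[-\km,\kp]$ to obtain $\tilde\vc\in\Z^n$ with $\sum_i\tilde c_i s_i=0$ in $\F_p^m$. Partition the support as $\mathrm{supp}(\tilde\vc)=A_1\sqcup A_2$ with $|A_j|\leq t$, taking $A_2=\varnothing$ when $w\leq t$ and otherwise splitting so both parts are nonempty. Define $\ve_1\in(M\cup\set{0})^n$ by $(\ve_1)_i=\tilde c_i$ on $A_1$ and $0$ elsewhere, and $\ve_2\in(M\cup\set{0})^n$ by letting $(\ve_2)_i$ on $A_2$ be the canonical representative in $[-\km,\kp]$ of $-\tilde c_i\bmod p$, and $0$ elsewhere. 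Since $ps_i=0$ in $\F_p^m$, the modular adjustments do not change images, hence $\ve_1\cdot(s_1,\dots,s_n)=\ve_2\cdot(s_1,\dots,s_n)$. When $w\leq t$ we have $\ve_2=0$ and $\ve_1$ has weight in $[1,t]$, contradicting condition (i) which forbids such an $\ve_1$ from mapping to $0$. When $t<w\leq 2t$, uniqueness in (i) forces $\ve_1=\ve_2$, impossible since their nonempty supports are disjoint.

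To finish, a sphere-packing count closes the argument: uniqueness plus coverage in the splitting give $\sum_{i=0}^t\binom{n}{i}(p-1)^i=|G|=p^m$, so $|C|\cdot V_t(n,p)=p^{n-m}\cdot p^m=p^n$, where $V_t(n,p)$ is the Hamming-ball volume. Combined with $d(C)\geq 2t+1$, the Hamming balls of radius $t$ around codewords partition $\F_p^n$, showing $C$ is perfect with $d(C)=2t+1$. The main obstacle I anticipate is the asymmetric case $\kp>\km$ in the minimum-distance step: the naive decomposition $\vc=\vc^+-\vc^-$ leaves coordinates in $[-\kp,\km]$, outside $M$ when $\kp>\km$. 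The mod-$p$ adjustment hidden in the definition of $\ve_2$, valid because $ps_i=0$, is the device that keeps $\ve_2\in(M\cup\set{0})^n$ without changing its image under $\phi$, allowing the uniqueness of the splitting to deliver the contradiction.
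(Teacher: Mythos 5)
Your proof is correct, but it takes a genuinely different route from the paper's. The paper argues geometrically from the tiling itself: to bound the minimum distance, it assumes $\vc+\ve\equiv\vc'+\ve'\pmod p$ for codewords $\vc,\vc'$ and weight-$\le t$ patterns $\ve,\ve'$, lifts $\ve,\ve'$ to the unique elements $\vf,\vf'\in\BALL$ congruent to them mod $p$, and derives two overlapping translates of $\BALL$, contradicting the packing property; perfectness then follows from the covering property of the tiling (covering radius $\le t$). You instead work entirely inside the splitting: you take a single nonzero codeword of weight $w\le 2t$, lift it canonically into $[-\km,\kp]^n$, split its support into two parts of size at most $t$, and move one part to the other side of the equation --- the mod-$p$ adjustment you flag for the asymmetric case $\kp>\km$ is exactly the right device, and it is legitimate because $ps_i=0$ keeps the images unchanged while returning the coefficients to $M$. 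Condition (i) of the $t$-splitting (distinctness and non-vanishing of the combinations) then gives the contradiction, and perfectness comes from the counting identity $\abs{G}=\sum_{i=0}^{t}\binom{n}{i}(p-1)^i$ rather than from the covering property. The two arguments are of course equivalent in substance, since the tiling and the splitting are interchangeable by Theorems~\ref{th:lattotile} and~\ref{th:tiletolat}, but yours is self-contained at the level of the group $\F_p^m$ and never re-enters $\Z^n$, while the paper's makes the geometric mechanism (two balls of the tiling colliding) explicit. One cosmetic point: you should note, as the paper does, that $\Lambda\cap\F_p^n$ coincides with $\pi(\Lambda)$ precisely because $p\Z^n\subseteq\Lambda$; you do verify the containment, so this is only a matter of stating the identification.
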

\begin{proof}
  By Theorem~\ref{th:lattotile} and Theorem~\ref{th:tiletolat},
  $\Lambda=\ker\phi$, where $\phi:\Z^n\to\F_p^m$, with
  $S=\set{s_1,\dots,s_n}\subseteq\F_p^m$, and
  $\phi(\vx)=\vx\cdot(s_1,\dots,s_n)$. Let $\ve_i\in\Z^n$ be the
  $i$-th standard unit vector. Due to the characteristic of $\F_p^n$,
  for all $\vx\in\Z^n$, $\phi(\vx)=\phi(\vx+p\ve_i)$. It follows that
  \begin{equation}
    \label{eq:movelambda}
    \Lambda=\Lambda+p\ve_i,
  \end{equation}
  for all $i=1,2,\dots,n$. In turn, this implies that
  \begin{equation}
    \label{eq:lambdamod}
    \Lambda\cap\F_p^n = \Lambda\bmod p \eqdef \set*{\vx\bmod p ; \vx\in\Lambda}.
  \end{equation}
  Since $\Lambda$ is a lattice, we then have that $C\eqdef\Lambda\cap\F_p^n$
  is a vector space, namely, a linear code.

  It remains to show $C$ is a perfect code with the claimed
  parameters. Let $\vc,\vc'\in C$ be two distinct codewords, and
  $\ve,\ve'\in\F_p^n$ be two error patterns, $\wt(\ve),\wt(\ve')\leq
  t$. Assume to the contrary that
  \[\vc+\ve\equiv \vc'+\ve' \pmod{p},\]
  where we emphasize that addition here is in $\F_p^n$ by writing that
  the equivalence holds modulo $p$. Since $\kp+\km+1=p$, there are
  unique vectors $\vf,\vf'\in\BALL$ such that
  \[ \vf\equiv \ve\pmod{p} \qquad\text{and}\qquad \vf'\equiv\ve'\pmod{p}.\]
  We now have
  \[\vc+\vf\equiv \vc'+\vf' \pmod{p},\]
  hence there exists $\vv\in\Z^n$ such that
  \[\vc+\vf=\vc'+\vf'+p\vv.\]
  If we define $\vc''=\vc'+p\vv$, then by \eqref{eq:movelambda},
  $\vc''\in\Lambda$. But then
  \[ \vc+\vf=\vc''+\vf',\]
  contradicting the fact that $\BALL$ tiles $\Z^n$ by $\Lambda$. Thus,
  $C$ is a linear $[n,k,\geq 2t+1]$ code over $\F_p$.

  Finally, we show $C$ is perfect. Let $\vu\in\F_p^n$ be any
  vector. Since $\BALL$ tiles $\Z^n$ by $\Lambda$, there exist
  $\vv\in\Lambda$ and $\ve\in\BALL$ such that $\vu=\vv+\ve$. Taking
  the equation modulo $p$, we get that
  \[ \vu \equiv \vv+\ve \pmod{p},\]
  where we emphasize that $\vu\bmod p=\vu$. By~\eqref{eq:lambdamod},
  $\vv\bmod p\in C$. Additionally, since $\kp+\km+1=p$, we have that
  $\wt(\ve)=\wt(\ve\bmod p)\leq t$. Thus $C$ has covering radius at
  most $t$, and it is therefore a perfect code, as claimed.
  \qed\end{proof}

\section{Nonexistence Results}
\label{sec:notiling}

The nonexistence results we present in this section are divided into
results on general tilings, and results on lattice tilings. The former
use mainly geometric arguments, whereas the latter employ algebraic
ones.

\subsection{Nonexistence of General Tilings}

The first result we present uses a comparison between the density of a
tiling of $\BALL$ with that of a tiling of a certain notched cube of a
lower dimension.

\begin{theorem}
  \label{th:non1}
  For any $n\geq t+1$, and $\kp\geq \km\geq 0$ not both $0$, if
  \[\sum_{i=0}^t \binom{n}{i} (\kp+\km)^i <(\kp+1)^{t+1}-(\kp-\km)^{t+1}\]
  then $\Z^n$ cannot be tiled by translates of $\BALL$.
\end{theorem}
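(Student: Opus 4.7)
I would proceed by contradiction, assuming that $\BALL$ tiles $\Z^n$ by some set $\Lambda$ of translates, normalized (by translating) so that $\Zero \in \Lambda$. The overall plan is to restrict the tiling to the $(t+1)$-dimensional coordinate slice $H = \Z^{t+1} \times \{\Zero\}^{n-t-1}$ and compare the induced partition of $H$ with a hypothetical tiling by a notched cube.

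First I would verify that the restriction of the tiling to $H$ is itself a tiling of $H \cong \Z^{t+1}$. Writing $\vv = (\vv^{(1)}, \vv^{(2)}) \in \Z^{t+1} \times \Z^{n-t-1}$ for a tile center, the tile $\vv+\BALL$ meets $H$ exactly when $-\vv^{(2)} \in \cB(n-t-1, t, \kp, \km)$, in which case the intersection $(\vv+\BALL) \cap H$ is a translate of $\cB(t+1, t - w, \kp, \km)$ with $w = \wt(\vv^{(2)})$. Every such slice has size at most $|\cB(t+1, t, \kp, \km)|$, and since $\cB(t+1, t, \kp, \km)$ embeds as the subset $\BALL \cap H \subseteq \BALL$ (valid because $n \geq t+1$), each slice has size at most $|\BALL|$.

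Next, I would focus on the axis-aligned cube $\hat K = [0, \kp]^{t+1} \subseteq H$ together with the inner sub-cube $[0, \kp - \km - 1]^{t+1}$, whose complement $\mathcal N = \hat K \setminus [0, \kp - \km - 1]^{t+1}$ is the notched cube of size exactly $V = (\kp+1)^{t+1} - (\kp - \km)^{t+1}$. The induced slice at $\Zero$ is the full $\cB(t+1, t, \kp, \km)$, and covers precisely the weight-at-most-$t$ points of $\hat K$; the remaining points of $\mathcal N$ must be covered by additional induced slices whose individual sizes are bounded by $|\BALL|$. A careful counting, combining the per-slice size bound of the first step with the notched-cube structure of $\mathcal N$, should produce the inequality $V \leq |\BALL|$, contradicting the hypothesis.

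The hard part will be the final counting step, where the ``comparison between densities'' alluded to in the introduction enters: one must compare the density $1/|\BALL|$ of the $\BALL$-tiling of $\Z^n$ with the density $1/V$ of a hypothetical notched-cube tiling of $\Z^{t+1}$. Quantifying this comparison, namely showing that the induced partition of $H$ must be ``at least as efficient'' in covering $\mathcal N$ as a genuine $\mathcal N$-tiling would be, is precisely what upgrades the obvious slice-size bound into the sharp inequality $V \leq |\BALL|$.
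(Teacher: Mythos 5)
Your setup is sound as far as it goes, but the proof has a genuine gap exactly where you defer to ``a careful counting'': that counting is the entire content of the theorem, and the slice-restriction framework you build does not support it. Restricting the tiling to $H\cong\Z^{t+1}$ only gives you information about the tile centers lying in the bounded slab $\Z^{t+1}\times[-\kp,\km]^{n-t-1}$ (those whose balls meet $H$), and the density of the tiling set inside such a slab is not controlled by the global density $1/\abs{\BALL}$; a density comparison carried out inside $H$ can therefore only yield an inequality polluted by the slab thickness (roughly, the maximal piece size times the number of admissible values of $\vv^{(2)}$ against $\abs{\BALL}$), not the sharp bound $V\leq\abs{\BALL}$. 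Likewise, knowing that the induced pieces partition $H$ and that each has size at most some $s$ gives a \emph{lower} bound on how many pieces meet a given region, but no upper bound, so no contradiction can be extracted from examining a single copy of the notched cube.

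The paper's proof supplies the two ingredients you are missing. First, it invokes the known fact (Stein; Buzaglo--Etzion) that the notched cube tiles $\Z^{t+1}$, hence its embedding $A\subseteq\Z^n$ (padded with $n-t-1$ zero coordinates) tiles all of $\Z^n$ --- this is what converts a statement about one notched cube into a global density statement. Second, and this is the real combinatorial work, it proves that every translate $\vv+A$ contains at most one point of the tiling set $T$: assuming $\vx,\vy\in(\vv+A)\cap T$ are distinct, one explicitly constructs, by a two-case analysis on where the ``notch'' coordinates of $\vx$ and $\vy$ sit, a point $\vz\in(\vx+\cB)\cap(\vy+\cB)$, contradicting the packing property. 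These two facts together force the density of $T$ to be at most $1/\abs{A}$, while the tiling hypothesis forces it to equal $1/\abs{\BALL}$, whence $\abs{A}\leq\abs{\BALL}$, the contrapositive of the theorem. Neither ingredient appears in your proposal, and without the ``at most one center per notched cube'' lemma I do not see how your plan closes.
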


\begin{proof}
Given integers $n\geq t+1$, assume that there is a set $T \subseteq
\Z^n$ such that $\cB\eqdef\cB(n,t,\kp,\km)$ tiles $\Z^n$ by $T$. Consider the
set
\[
A=\set*{(x_1,x_2,\ldots,x_{t+1},  0, \ldots, 0) ; (x_1,\dots,x_{t+1})\in [0,\kp]^{t+1}\setminus [\km+1,\kp]^{t+1} }. 
\]
Hence, if we remove the last $n-t-1$ zero coordinates, the elements of
$A$ are exactly a notched cube, as defined in
\cite{Ste90,BuzEtz12}. Thus, by \cite{Ste90,BuzEtz12}, translates of
$A$ tile the space\footnote{While \cite{Ste90,BuzEtz12} discuss a
  tiling of $\R^n$, it is easily seen that the tiling constructed
  there is in fact a tiling of $\Z^n$ as in our setting.}
\[\set*{(x_1,x_2,\ldots,x_{t+1},  0, \ldots, 0) ;  x_i  \in \Z   \text{\ for all }  1\leq i \leq t+1 } .\]
Trivially, it follows that translates of $A$ can tile the space
$\Z^n$.

We now claim that any translate of $A$ contains at most one point from
$T$.  Suppose to the contrary that both $\vx=(x_1,x_2,\ldots,x_n)$ and
$\vy=(y_1,y_2, \ldots, y_n)$ belong to the intersection $(\vv+A) \cap
T$, where $\vv=(v_1,v_2,\ldots,v_n)\in\Z^n$, and $\vx\neq\vy$. Then
$v_i\leq x_i,y_i \leq v_i + \kp$ for $1\leq i \leq t+1$, $x_i=y_i=v_i$
for $t+2 \leq i \leq n$, and there are indices $1\leq j_x, j_y \leq
t+1$ such that $x_{j_x} \leq v_{j_x} +\km$ and $y_{j_y} \leq
v_{j_y}+\km$. W.l.o.g., assume that $x_1\leq v_1+\km$. We proceed in
two cases.
\begin{enumerate}
\item If $y_1\leq v_1+\km$, let $\vz=(z_1,z_2,\ldots,z_{t+1},v_{t+2},v_{t+3}\ldots,v_n)$, where
\begin{equation*}
\begin{split}
  z_1 = & \begin{cases}
    x_1, & \text{if $x_i\leq y_i$ for all $i=2,3,\ldots,t+1$,} \\
    y_1, & \text{otherwise,}
\end{cases}
\end{split}
\end{equation*}
and
\[z_i=\max\set{x_i,y_i} \text{\ for } i=2,3,\ldots, t+1.\] 
Then it is easy to see that
\[ \vz\in (\vx+\cB)\cap (\vy+\cB),\]
a contradiction.
\item If $y_1> v_1+ \km$, then there is $2\leq j \leq t+1$ such that
  $y_j\leq v_j+\km$. W.l.o.g., assume that $y_2\leq v_2+ \km$ and let
  $\vz=(y_1,z_2,z_3,\ldots,z_{t+1},v_{t+2},v_{t+3}\ldots,v_n)$, where
\begin{equation*}
\begin{split}
  z_2 = & \begin{cases}
    x_2, & \text{if $x_i\leq y_i$ for all $i=2,3,\ldots,t+1$,} \\
    \max\set{x_2,y_2}, &\text{otherwise,}
\end{cases}
\end{split}
\end{equation*}
and
\[z_i=\max\set{x_i,y_i} \text{\ for } i=3,4,\ldots, t+1.\] 
Again,
\[ \vz\in (\vx+\cB)\cap (\vy+\cB),\]
a contradiction.
\end{enumerate}

We have shown that any translate of $A$ contains at most one point
from $T$, and so the tiling by $A$ is denser than the tiling by $\cB$.
It follows that the reciprocal of the volume of $\cB$ cannot exceed
the reciprocal of the volume of $A$, i.e.,
\[\frac{1}{\sum_{i=0}^t \binom{n}{i} (\kp+\km)^i}\leq \frac{1}{(\kp+1)^{t+1}-(\kp-\km)^{t+1}}.\]
Rearranging gives us the desired result. 
\qed\end{proof}

\begin{remark}
  If  $\km\geq c\kp$ for some real number $c>0$, while $n$ and $t$ are fixed, then according
  to Theorem~\ref{th:non1}, there is an upper bound on $\kp$ for which
  $\BALL$ can tile $\Z^n$.
\end{remark}

  Next, we study a case which is analogous to that of proper
  quasi-crosses when $t=1$, namely, the case when $\kp>\km>0$. The
  main tool is a geometric one, studying the two translates of $\BALL$
  that cover the all-zero and all-one vectors.

\begin{theorem}\label{thm:nge2t-1}
Let $2t\geq n\geq t+1$ and $\kp > \km > 0$. Then $\Z^n$ cannot be
tiled by $\BALL$.
\end{theorem}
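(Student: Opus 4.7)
The plan is to suppose, for a contradiction, that $\cB \eqdef \BALL$ tiles $\Z^n$ by some set of tile-centers $T \subseteq \Z^n$. There are unique $\vv_0, \vv_1 \in T$ with $\Zero \in \vv_0 + \cB$ and $\One \in \vv_1 + \cB$; write $\vv_0 = -\ve_0$ and $\vv_1 = \One - \ve_1$ with $\ve_0, \ve_1 \in \cB$. I would then split into two subcases depending on whether $\vv_0 = \vv_1$.

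If $\vv_0 = \vv_1$, then $\ve_1 = \ve_0 + \One$; the requirement $\ve_0, \ve_1 \in \cB$ forces each coordinate $e_{0,i} \in [-\km, \kp - 1]$, and the two weight bounds $\wt(\ve_0) \leq t$ and $\wt(\ve_1) \leq t$ give at least $n - t$ coordinates equal to $0$ and at least $n - t$ coordinates equal to $-1$. These disjoint sets reprove $n \leq 2t$, and pin $\vv_0$ to be ``almost'' a $\{0,1\}$-vector, with at most $2t - n$ ``slack'' coordinates taking values in $[-\km, \kp - 1] \setminus \{-1, 0\}$. I would then use the asymmetry $\kp > \km$ to identify an auxiliary point on which to pivot; a natural candidate is $\kp \ve_j$ for an index $j$ with $e_{0,j} = 0$, which by $\kp > \km$ sits just past the ``$-\km$-arm'' of $\vv_0 + \cB$, and argue that the unique tile of $T$ covering it is forced, by the rigidity of $\vv_0$, to overlap $\vv_0 + \cB$.

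If $\vv_0 \neq \vv_1$, then $(\vv_0 + \cB) \cap (\vv_1 + \cB) = \varnothing$, equivalently $\vd \eqdef \vv_1 - \vv_0 = \One + \ve_0 - \ve_1 \notin \cB - \cB$. Parsing membership in $\cB - \cB$ coordinatewise (each entry of a difference $\vg - \vf$ must lie in $[-\kp-\km, \kp+\km]$, and any entry $> \km$ forces the corresponding $g_i \neq 0$ while any entry $< -\kp$ forces the corresponding $f_i \neq 0$), together with $\wt(\ve_0), \wt(\ve_1) \leq t$ and $n \leq 2t$, one is forced into one of two rigid structural patterns — in each, there is a ``gap'' coordinate $j$ with $e_{0,j}=0,\, e_{1,j}=-\km$ (or the symmetric $\km\leftrightarrow\kp$ analogue). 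Consider the indicator vector $\vy$ of this gap coordinate (or gap set); I would verify directly that $\vy \notin \vv_0 + \cB$ and $\vy \notin \vv_1 + \cB$, and then argue that any tile of $T$ containing $\vy$ must overlap one of $\vv_0 + \cB, \vv_1 + \cB$, since the asymmetry $\kp > \km$ blocks the only ``free'' direction in which a disjoint third tile could sit.

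The main obstacle is identifying the right auxiliary point in each subcase and verifying that it cannot be covered without a collision. The asymmetry $\kp > \km$ is crucial: it creates the forbidden band on the $-\km$ side of $\vv_0+\cB$ (and on the $+\kp$ side of $\vv_1+\cB$) that no disjoint third tile can fill, while the upper bound $n \leq 2t$ keeps the coordinate budgets of $\ve_0, \ve_1$ tight enough to force the collision.
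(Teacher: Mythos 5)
Your proposal is a plan rather than a proof: the two steps on which everything hinges --- identifying the auxiliary point and verifying that covering it forces a collision --- are exactly the steps you leave as ``I would argue,'' and as stated they do not go through. In the subcase $\vv_0=\vv_1$, the candidate point $\kp\ve_j$ (with $e_{0,j}=0$) does not ``sit just past the $-\km$-arm'' of $\vv_0+\cB$: its displacement from $\vv_0$ is $\kp\ve_j+\ve_0$, whose $j$-th entry is $+\kp\in[-\km,\kp]$, so whenever $\wt(\ve_0)<t$ this point is simply \emph{inside} $\vv_0+\cB$ and yields no information. In the subcase $\vv_0\neq\vv_1$, the assertion that $\One+\ve_0-\ve_1\notin\cB-\cB$ forces ``one of two rigid structural patterns'' with a gap coordinate where $e_{0,j}=0$ and $e_{1,j}=-\km$ is not justified; non-membership in $\cB-\cB$ is a disjunction over coordinate ranges \emph{and} weight budgets, and extracting usable structure from it is the entire difficulty. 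You also take on a complication the paper avoids: by translating so that the tile covering $\Zero$ is centered at $\Zero$, the case $\vv_0=\vv_1$ disappears, since $\wt(\One)=n>t$ gives $\One\notin\cB$.

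More importantly, your overall strategy --- find a point $\vy$ covered by neither ball and argue that the third tile covering it must collide with one of the first two --- is the harder route; it is essentially what the paper resorts to only in the symmetric case $\kp=\km$, where three tiles genuinely must be juggled. For $\kp>\km>0$ no third tile is needed: after normalizing $\Zero\in T$ and sorting the coordinates of the center $\va$ of the tile covering $\One$ (so that $a_i=1$ for $i\leq n-t$ and $a_{n-t+1}\geq\cdots\geq a_n$), one first shows $a_{t+1}=1+\km$ (else the vector with $t$ ones followed by $n-t$ zeros lies in both balls), and then, with $i_0$ the largest index where $a_{i_0}=1+\km$, exhibits an explicit vector lying in $(\va+\cB)\cap(\Zero+\cB)$; the inequality $1+\km\leq\kp$ is precisely what lets that vector fit inside $\Zero+\cB$, and $\km\geq 1$ is what lets it fit inside $\va+\cB$. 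Until you produce and verify such an explicit witness (or the analogous one in your uncovered-point scheme), the argument is not complete.
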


\begin{proof}
  Denote $\cB \triangleq \BALL$, and assume to the contrary that there
  is a set $T\subseteq \Z^n$ such that $\cB$ tiles $\Z^n$ by
  $T$. W.l.o.g., we may assume that the all-zero vector $\Zero$ is in
  $T$.

  We consider the all-one vector $\One$. Since $\One \not \in \cB$,
  there is a non-zero vector $\va=(a_1,a_2,\ldots, a_n) \in T$ such
  that $\One\in\va+\cB$, where $1-\kp\leq a_i \leq 1+\km$ for $1\leq
  i\leq n$.  By interchanging the coordinates, we may assume,
  w.l.o.g., that
  \[a_i=1 \text{\ for }1\leq i\leq n-t, \text{\ and\ } a_i \geq a_{i+1} \text{ for } n-t+1 \leq i \leq n-1.\]
  If $a_{t+1} <1+\km$, then $1-\kp\leq a_i \leq \km$ for $t+1 \leq i
  \leq n$. Since by assumption $n-t\leq t$, it follows
  that
  \[(\underbrace{1,1,\ldots, 1}_{t}, \underbrace{0,0,
    \ldots,0}_{n-t}) \in \parenv*{ \va + \cB } \cap \parenv*{
    \Zero+\cB },\]
  which contradicts the assumption that $\cB$ tiles $\Z^n$ by
  $T$. Hence, $a_{t+1}=1+\km$.

  Now, let $i_0$ be the largest index such that $a_{i_0}=1+\km$. Then
  $i_0-t\geq 1$ as $a_{t+1}=1+\km$. Consider the vector 
  \[\vv\triangleq (\underbrace{1,1,\ldots, 1}_{n-i_0}, \underbrace{0,0,\ldots, 0}_{i_0-t}, a_{n-t+1}, a_{n-t+2}, \ldots, a_{i_0}, \underbrace{0,0,\ldots, 0}_{n-i_0}).\]

  We first compare $\vv$ with $\va$.  Note that $(n-i_0)+(i_0-t)= n-t$
  and $a_i=1$ for $1 \leq i \leq n-t$. Hence, $\vv$ can be obtained
  from $\va$ by changing $n-t$ $a_i$'s to $0$, i.e., those $a_i$'s
  with $n-i_0+1 \leq i \leq n-t$ or $i_0+1 \leq i \leq n$.  Since
  $n-t\leq t$, $a_i=1\leq \km$ for $n-i_0+1 \leq i \leq n-t$, and
  $1-\kp\leq a_i \leq \km$ for $i_0+1 \leq i\leq n$, we have $\vv \in
  \va+\cB$.
  
  Second, we compare $\vv$ with $\Zero$. Note that
  $(i_0-t)+(n-i_0)=n-t$. These two vectors differ in at most $t$
  positions. Hence, $\vv$ can be obtained from $\Zero$ by changing the
  first $n-i_0$ $0$'s to $1$ and the $i$-th $0$ to $a_i$ for $n-t+1
  \leq i\leq i_0$. Since $-\km\leq a_i \leq 1+\km$ for $n-t+1 \leq
  i\leq i_0$, $\kp\geq 1$ and $1+\km \leq \kp$, we have that $\vv \in
  \Zero+\cB$.

  It follows that
  \[ \vv\in (\va+\cB)\cap(\Zero+\cB),\]
  which again contradicts the assumption that $\cB$ tiles $\Z^n$ by
  $T$.
  \qed\end{proof}

  For the last result concerning general tiling, we study the case of
  equal arm length, $\kp=\km$. The method used is an elaboration of
  the one used in the proof of Theorem~\ref{thm:nge2t-1}: instead of
  considering only the all-zero and all-one vectors, we consider a
  third vector as well.

\begin{theorem}
  Let $\kp=\km\geq 2$ and $n> t\geq (4n-2)/5$.  Then for any $n\geq 3$, $\Z^n$ cannot be
  tiled by $\BALL$.
\end{theorem}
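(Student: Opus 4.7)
The plan is to elaborate the argument of Theorem~\ref{thm:nge2t-1} by introducing a third anchor vector $\vz := 2\One$ alongside $\Zero$ and $\One$. Assume for contradiction that $\cB := \BALL$ tiles $\Z^n$ via some $T$ with $\Zero \in T$ (WLOG). Running the proof of Theorem~\ref{thm:nge2t-1} with $\km = \kp$ produces some $\va \in T$ covering $\One$; after permuting coordinates, $a_i = 1$ for $i \in [1, n-t]$ and $a_{n-t+1}\geq\dots\geq a_n$, and the test vector $(1,\ldots,1,0,\ldots,0)$ with $t$ ones forces $a_i = \kp+1$ for some $i \geq t+1$. Hence $\va$ has at least $k := 2t-n+1$ coordinates equal to $\kp+1$; call this index set $I_a$.

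Next, I introduce $\vz = 2\One$. Since $\wt(\vz) = n > t$ we have $\vz \notin \Zero + \cB$, and $\wt(\vz - \va) \geq (n-t) + k > t$ gives $\vz \notin \va + \cB$. Hence some $\vc \in T \setminus \{\Zero, \va\}$ covers $\vz$, so $\vc = 2\One - \vh$ for some $\vh \in \cB$, and $\vc$ has at least $n-t$ entries equal to $2$. An analogous test-vector argument, using $(2,\ldots,2,0,\ldots,0)$ with $t$ twos instead of the $t$-ones vector, forces $\vc$ to have at least $k$ entries in $\{\kp+1, \kp+2\}$ (otherwise a conflict between $\vc + \cB$ and $\Zero + \cB$ arises); denote these positions by $I_c$. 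The hypothesis $t \geq (4n-2)/5$ is equivalent to $2k = 4t-2n+2 > n$, so $|I_a| + |I_c| \geq 2k > n$ and hence $I_a \cap I_c \neq \varnothing$.

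The final step constructs $\vy \in (\va + \cB) \cap (\vc + \cB)$, which contradicts $T$ being a tiling. The canonical choice is $y_i := 2$ at each 2-position of $\vc$ and $y_i := \kp + 1$ at every other position (in particular at $I_a \cap I_c$, where $\va$ and $\vc$ are both ``large''). Then $\vy - \vc$ has entries in $\{0, -1\}$ with weight at most the number of $(\kp+2)$-entries of $\vc$, hence $\leq t$; and $\vy - \va$ has non-zero entries only at the 1-positions of $\va$ and at the 2-positions of $\vc$ lying in $I_a$, with magnitudes in $\{1, \kp, \kp-1\}$, all lying in $[-\kp, \kp]$ because $\kp \geq 2$. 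The total weight of $\vy - \va$ is at most $2(n-t) \leq t$, using $t \geq 2n/3$, which is implied by $t \geq (4n-2)/5$ for $n \geq 3$.

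The main obstacle is the case analysis on how the 1-positions of $\va$ overlap the 2-positions of $\vc$, and on the ``residual'' coordinates in $A_3 := [1,n] \setminus (A_1 \cup I_a)$ and $C_3 := [1,n] \setminus (C_1 \cup I_c)$, each of size at most $n-t-1$. At residual positions the canonical $y_i$ may push $|y_i - a_i|$ or $|y_i - c_i|$ outside $[-\kp, \kp]$; $y_i$ must then be chosen from the feasible interval $[a_i - \kp, a_i + \kp] \cap [c_i - \kp, c_i + \kp]$, and the trickiest sub-case is when this intersection shrinks (e.g., at a position where $a_i$ attains $1-\kp$ and $c_i$ attains $\kp+2$). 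The hypothesis $t \geq (4n-2)/5$ is precisely what is needed to guarantee both $I_a \cap I_c \neq \varnothing$ and that weight contributions across $A_1$, $I_a \cap C_1$, and the residuals all fit within the budget $t$.
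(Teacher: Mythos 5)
Your overall strategy --- elaborating Theorem~\ref{thm:nge2t-1} by bringing in a third vector and forcing two of the three covering balls to overlap --- is the same as the paper's, and several of your intermediate steps are sound: the counting that gives $\va$ at least $2t-n+1$ coordinates equal to $\kp+1$, the test-vector argument showing the tile $\vc$ covering $2\One$ has at least $2t-n+1$ coordinates in $\{\kp+1,\kp+2\}$, and the observation that $t\geq(4n-2)/5$ makes the various index sets overlap. But there is a genuine gap at the final step, and it is not the bookkeeping issue you flag as ``the main obstacle.'' Your third anchor $2\One$ places no constraint on $\vc$ at the \emph{residual} coordinates of $\va$, i.e.\ the up to $n-t-1$ coordinates $i>i_0$ where $1-\kp\leq a_i\leq\kp$. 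At such a coordinate you can have $a_i=1-\kp$ while $c_i=\kp+2$, whence $\abs{a_i-c_i}=2\kp+1>2\kp$ and $[a_i-\kp,a_i+\kp]\cap[c_i-\kp,c_i+\kp]=\varnothing$. Since membership in $\va+\cB$ requires $y_i-a_i\in[-\kp,\kp]$ in \emph{every} coordinate (zero entries of the error vector are still constrained to lie in the range), the intersection $(\va+\cB)\cap(\vc+\cB)$ is then empty outright: no choice of $\vy$, canonical or adjusted, can exist, and your argument produces no contradiction. You correctly identify this sub-case (``$a_i$ attains $1-\kp$ and $c_i$ attains $\kp+2$'') but the feasible interval there is not merely ``shrunk'' --- it vanishes, and no amount of the hypothesis $t\geq(4n-2)/5$ repairs it.

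The paper sidesteps exactly this by choosing the third anchor to be $\vv=(2,\dots,2,1,\dots,1,a_{i_0+1},\dots,a_n)$, which \emph{copies $\va$'s entries on the residual coordinates}; the tile $\vb$ covering $\vv$ is then automatically within $\kp$ of $a_i$ there. Moreover, even with that fix a single target intersection does not suffice: the paper splits on the number $c$ of coordinates in the middle block where $b_i=1+k$, aiming for a point of $(\Zero+\cB)\cap(\vb+\cB)$ when $c$ is small and of $(\va+\cB)\cap(\vb+\cB)$ when $c$ is large, and the hypothesis $t\geq(4n-2)/5$ is consumed by the Hamming-weight counts in those two cases (e.g.\ $n-i_0+3\tau-1\leq t$ and $3n-3t-1\leq t$). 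Your single-intersection plan would need an analogous dichotomy even after the anchor is repaired. To fix your proof you would have to both (i) replace $2\One$ by an anchor agreeing with $\va$ on its residual coordinates (or otherwise argue those coordinates cannot be extremal), and (ii) add a case split deciding which pair of balls to intersect.
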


\begin{proof} Let $k\triangleq \kp=\km$ and    $\tau\eqdef n-t$.
  Suppose to the contrary that there is a set $T\subseteq \Z^n$ such
  $\cB\eqdef\BALL$ tiles $\Z^n$ by $T$. W.l.o.g., we assume that
  $\Zero \in T$. Since $t\geq (4n-2)/5$ and $n\geq 3$, we have $t\geq n/2$. According to the first three paragraphs in the proof of
  Theorem~\ref{thm:nge2t-1}, we may assume that $\One \in \va+\cB$,
  where
  \[\va\triangleq (\underbrace{1,1,\ldots,1}_{\tau}, \underbrace{1+k, 1+k, \ldots, 1+k}_{i_0-\tau}, a_{i_0+1},\ldots,a_n)    \in T,\]
  with $i_0\geq t+1$, and $1-k\leq a_i \leq k$ for $i_0+1\leq i \leq
  n$. 

  We consider the vector
  \[\vv\triangleq  (\underbrace{2,2,\ldots,2}_{\tau}, \underbrace{1, 1, \ldots, 1}_{i_0-\tau}, a_{i_0+1},\ldots,a_n).\]
  It is not contained in $(\Zero+\cB)\cup (\va+\cB)$ as the Hamming
  distance between $\vv$ and $\Zero$ or $\vv$ and $\va$ is at least
  $i_0 \geq t+1$. We assume that $\vv$ is contained in another ball
  centred at $\vb=(b_1,b_2, \ldots, b_n) \in T$, where $1-k\leq b_i
  \leq 1+k$ for $\tau+1 \leq i \leq i_0$. Let $c\triangleq \abs{\set{ i ;
    \tau+1\leq i\leq i_0, b_i=1+k}}$.  We proceed in the following two
  cases.
  
  \begin{enumerate}
  \item
    If $c \leq i_0-3\tau$, by interchanging all the coordinates
    between $\tau+1$ and $i_0$, we may assume that $1-k\leq b_i \leq
    k$ for $i_0-2\tau+1\leq i\leq i_0$. We consider the vector
    \[\vx\triangleq (\underbrace{2,\ldots,2}_{\tau}, \underbrace{1,1,\ldots,1}_{i_0-3\tau}, \underbrace{0,\ldots,0}_{\tau}, \underbrace{b_{i_0-\tau+1} \ldots, b_{i_0}}_{\tau}, a_{i_0+1}, \ldots, a_n).\]
    We first compare $\vx$ with $\Zero$. These two vectors agree in at
    least $\tau=n-t$ positions. Noting that $k\geq 2$, $1-k\leq b_i
    \leq k$ for $i_0-\tau+1\leq i\leq i_0$ and $1-k\leq a_i \leq k
    \textrm{ for } i_0+1\leq i \leq n$, we have $\vx \in
    \Zero+\cB$. Second, we compare $\vx$ with $\vb$. They differ in
    the first $i_0-\tau$ positions and the last $n-i_0$ positions, and
    so in total $n-\tau=t$ positions. Noting that $\vx$ and $\vv$
    agree in the first $i_0-2\tau$ positions and the last $n-i_0$
    positions and $\vv\in \vb+\cB$, the symbols of $\vx$ in these
    positions can be obtained from the corresponding symbols of $\vb$
    by adding or subtracting up to $k$ units.  For the remaining
    $\tau$ positions where $i_0-2\tau+1\leq i\leq i_0-\tau$, we have
    $1-k\leq b_i \leq k$.  It follows that $\vx \in \vb +\cB$ and then
    \[ \vx\in (\vb+\cB)\cap(\Zero+\cB).\]

  \item
    If $c > i_0-3\tau$, we may assume that $b_i=1+k$ for $\tau+1 \leq
    i \leq i_0-2\tau+1$.  Consider the vector
    \[\vy\triangleq (\underbrace{2,\ldots,2}_{\tau}, \underbrace{1+k,\ldots,1+k}_{i_0-3\tau+1}, 1,\ldots,1, a_{i_0+1}, \ldots,a_n).\]
    We first compare $\vy$ with $\vb$. These two vectors differ in the
    first $\tau$ positions and the last $n-i_0+2\tau-1$
    positions. Since $t\geq (4n-2)/5$, they differ in a total of
    $n-i_0+3\tau-1 \leq 4\tau-2 = 4 n-2 -4t \leq t$ positions. Noting
    that $\vy$ and $\vv$ agree in these positions and $\vv \in
    \vb+\cB$, we have $\vy \in \vb+\cB$. Second, we compare $\vy$ with
    $\va$. They differ in a total of $\tau+
    (i_0-\tau)-(i_0-3\tau+1)=3\tau-1=  3n-3t - 1$ positions.  Note that
    $t\geq (4n-2)/5 \geq (3n-1)/4$ as $n\geq 3$. Thus we have
    $3n-3t - 1\leq t$. Furthermore, in these $3n-3t - 1$ positions, the
    corresponding symbols differ by at most $k$ units. It follows that $\vy
    \in \va +\cB$ and then
    \[ \vy\in (\va+\cB)\cap( \vb+\cB).\]
  \end{enumerate}
  
  In both cases above we obtain a contradiction to the assumption that
  $\cB$ tiles $\Z^n$ by $T$.
%Obviously, $\vb\neq  \Zero$ nor $(1,1+k, \ldots, 1+k).$  We claim that $b_i \leq k$ for all $2\leq i \leq n$. Otherwise, by interchanging  the coordinates, we may assume that $b_i=1+k$  for $2\leq i \leq i_0$ and $b_i \leq k$ for $i_0< i \leq n$. Then \[(2,\underbrace{1+k, \ldots,1+k}_{i_0-1},1,\ldots, 1) \in (\va+\cB) \cap (\vb +\cB),\] a contradiction. So, we have that $b_i \leq k$ for all $2\leq i \leq n$. Since $k\geq 2$, then $(2,0, 0,\ldots, 0) \in (\Zero+\cB) \cap (\vb + \cB)$, a contradiction. 
%So $k=1$. We proceed in the following three cases, depend on the value of $b_1$.
%\begin{enumerate}
%\item If $b_1=1$, then $\One \in (\va+\cB) \cap (\vb+\cB)$, a contradiction.
%\item If $b_1=2$, then either $(1,0,\ldots, 0) \in (0+\cB) \cap (\vb+\cB)$  (when $b_i=0$ for some $2\leq i \leq n$), or  $\One \in (\va+\cB) \cap (\vb+\cB)$ (when $b_i=1$ for all $2\leq i \leq n$).
%\item If $b_2=3$, then consider the point $(1,2)$
%\end{enumerate}
\qed\end{proof}

\subsection{Nonexistence of Lattice Tilings}

We now turn to the more specific case of lattice tilings. Some of the
nonexistence results presented in this section are stated as necessary
conditions. The main tool used is Theorem~\ref{th:tiletolat}, and the
algebraic study of the $t$-splitting. 
We begin with the lattice-tiling
equivalent of Theorem~\ref{th:non1}.

\begin{theorem}
For any $n\geq t+1$, and $\kp\geq \km\geq 0$ not both $0$, if $\BALL$ lattice-tiles $\Z^n$ then
\[\sum_{i=1}^t \binom{n}{i}(\kp+\km)^{i-1} \geq (\km+1)^t.\]
\end{theorem}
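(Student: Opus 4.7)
The plan is to invoke Theorem~\ref{th:tiletolat} to obtain a group splitting $G=M\splt_t S$ with $M=[-\km,\kp]^*$ and $S=\set{s_1,\ldots,s_n}\subseteq G$, and then exhibit a large set inside $G$. Since $|G|=|\BALL|=\sum_{i=0}^{t}\binom{n}{i}(\kp+\km)^i$, multiplying both sides of the desired inequality by $\kp+\km$ shows that it is equivalent to the lower bound $|G|\ge(\kp+\km)(\km+1)^t+1$. I will use throughout that because the weight-$\le 1$ elements $ms_j$, $m\in[-\km,\kp]$, are pairwise distinct under the splitting, every $s_j$ has order at least $\kp+\km+1$ in $G$.

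I would then consider the set
\[V\eqdef\set*{\sum_{i=1}^{t}c_is_i+v\,s_{t+1} ; \vc\in[0,\km]^t,\ v\in[0,\kp+\km]}\subseteq G,\]
whose enumeration $(\vc,v)\mapsto \sum_{i=1}^{t} c_is_i+v s_{t+1}$ has $(\km+1)^t(\kp+\km+1)$ pairs. If this enumeration is injective, then
\[|G|\ge(\km+1)^t(\kp+\km+1)=(\kp+\km)(\km+1)^t+(\km+1)^t\ge(\kp+\km)(\km+1)^t+1,\]
which is precisely the required bound. For injectivity, suppose two distinct pairs $(\vc,v)\ne(\vc',v')$ give the same image. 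Then $\vw\eqdef(\vc-\vc',v-v',0,\ldots,0)$ is a nonzero element of $\Lambda$, and I would construct $\ve,\ve'\in\BALL$ with $\ve\ne\ve'$ and $\ve-\ve'=\vw$; this gives $\phi(\ve)=\phi(\ve')$, contradicting the packing property of the tiling.

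The construction of $\ve,\ve'$ separates the positive and negative parts of the first $t$ entries of $\vw$ between $\ve$ and $\ve'$, and chooses $\ve_{t+1},\ve'_{t+1}\in[-\km,\kp]$ with difference $v-v'$. The delicate case is $\wt(\vw)=t+1$ together with $|v-v'|>\kp$: position $t+1$ must then contribute to both supports, and one rebalances by reassigning a single first-$t$ coordinate across the split. This rebalancing leaves $\wt(\ve),\wt(\ve')\le t$ precisely when $t\ge 2$, and I expect verifying this bookkeeping in every sub-case to be the main obstacle. For $t=1$ the claimed inequality reduces to $n\ge\km+1$, and the enumeration above genuinely has collisions (as visible in the cross $\cB(2,1,1,1)$ with $|V|=6>5=|G|$); for that case one would invoke the classical nonexistence results for quasi-cross lattice tilings~\cite{Ste67,SteSza94,Sch12}.
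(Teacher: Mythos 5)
Your proposal is correct and is essentially the paper's own argument in contrapositive form: both lower-bound $\abs{G}$ by $(\km+1)^t(\kp+\km+1)$ by counting sums whose coefficients lie in $[0,\km]$ on $t$ splitter elements and range over $\kp+\km+1$ values on one further element, both convert a collision into two distinct weight-$\leq t$ combinations with coefficients in $[-\km,\kp]$ (which is exactly where $t\geq 2$ is needed), and both defer $t=1$ to prior work. The only substantive difference is that the paper sidesteps your sign-splitting and rebalancing bookkeeping with one algebraic rearrangement --- adding $\kp s_1$ to both sides and moving a single small term across --- which works uniformly because any coefficient of absolute value at most $\km$ already lies in $[-\km,\kp]$; note also that your ``delicate case'' is stated too narrowly, since an overloaded support can occur even when $\abs{v-v'}\leq\kp$ (namely when all entries of $\vc-\vc'$ share one sign), though the same reassignment of a single coordinate repairs it.
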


\begin{proof}  For $t=1$, see \cite[Theorem 11]{Sch12}. In the following, we focus on  the cases  $t\geq 2$.
  Assume that $\cB\eqdef\cB(n,t,\kp,\km)$ lattice-tiles $\Z^n$. By
  Theorem~\ref{th:tiletolat} there is an Abelian group $G$ with
  $\abs{G}= \sum_{i=0}^t \binom{n}{i} (\kp+\km)^i$ and a subset
  $S=\set{s_1,s_2,\ldots, s_n} \subseteq G$ such that $G=M\splt_t S$,
  where $M\eqdef[-\km,\kp]^*$.

%Let $0\leq \km\leq \kp$ and let ${\cal B}=\{\vx=(x_1,x_2,\ldots,x_n)\in \Z^n: -\km \leq x_i \leq \kp \text{\ for } i=1,2,\ldots,n,  \text{ and } \wt(\vx)\leq t \}$. Assume there is a splitting of an Abelian group $G$ by $\cal B$ and $S=\{s_1,s_2,\ldots, s_n\}$ which induces a lattice tiling by $(n,t,\kp,\km)$-error-balls, i.e., $\abs{G}= \sum_{i=0}^t \binom{n}{i} (\kp+\km)^i$.

  We first claim that for all $2\leq i_1<i_2<\cdots <i_t \leq n$ there
  are integers $x_1^{i_1,i_2,\ldots, i_t}, x_{i_1}^{i_1,i_2,\ldots,
    i_t}, \ldots, x_{i_t}^{i_1,i_2,\ldots, i_t}$ such that $0 
  \leq x_1^{i_1,i_2,\ldots, i_t} \leq \floor*{
    \frac{\abs{G}}{(\km+1)^t}}$, $\abs{x_{i_j}^{i_1,i_2,\ldots,i_t}}\leq \km$
  for $j=1,2,\ldots,t$, and
  \[s_1x_1^{i_1,i_2,\ldots, i_t}+s_{i_1} x_{i_1}^{i_1,i_2,\ldots, i_t} +\cdots + s_{i_t} x_{i_t}^{i_1,i_2,\ldots, i_t} =0.\]

  To prove this, fix $i_1,i_2,\ldots, i_t$ and look at the integers $0
  \leq a_1 \leq \floor*{ \frac{\abs{G}}{(\km+1)^t}}$, $0\leq a_{i_j}\leq
  \km$ for $j=1,2,\ldots,t$ and the sums $s_1a_1+s_{i_1}a_{i_1}+\cdots
  +s_{i_t}a_{i_t}$. Since
  \begin{align*}
    \parenv*{  \floor*{  \frac{\abs{G}}{(\km+1)^t}} +1 }(\km+1)^t &\geq \abs{G} - ((\km+1)^t-1) +(\km+1)^t \\
    &=\abs{G}+1 >\abs{G},
  \end{align*}
  by the pigeonhole principle there exist two sequences of integers,
  $(b_1,b_{i_1}, \ldots, b_{i_t})$ and $(c_1,c_{i_1}, \ldots,
  c_{i_t})$, such that
  \[s_1b_1+s_{i_1}b_{i_1}+\cdots +s_{i_t}b_{i_t} =
  s_1c_1+s_{i_1}c_{i_1}+\cdots +s_{i_t}c_{i_t}.\]

  Assume, w.l.o.g., that $b_1\geq c_1$ and define $d_1\eqdef b_1-c_1$ and
  $d_{i_j}\eqdef b_{i_j}-c_{i_j}$ for $j=1,2,\ldots,t.$ We now get
  \[s_1d_1+s_{i_1}d_{i_1}+\cdots +s_{i_t}d_{i_t} = 0,\]
  where $(d_1,d_{i_1},\ldots, d_{i_t})\neq (0,0,\ldots,0)$. In addition 
  \[0\leq d_1 \leq  \floor*{  \frac{\abs{G}}{(\km+1)^t}} \text{\ and }  \abs{d_{i_j}}\leq \km \text{\ for }  j=1,2,\ldots,t,\]
which prove  our claim.
 
 Now, if $ \floor*{  \frac{\abs{G}}{(\km+1)^t}} \leq \kp+\km$, then $0\leq d_1 \leq  \floor*{  \frac{\abs{G}}{(\km+1)^t}} \leq \kp+\km$, and
  \[s_1\kp+s_{i_1}d_{i_1}+\cdots+s_{i_{t-1}}d_{i_{t-1}} = s_1 (\kp-d_1)
  -s_{i_t}d_{i_t},\]
  which contradicts the fact that $G=M\splt_t S$, since $t\geq 2$. 
  
  Hence, we have that 
  \begin{equation*}%\label{ineq-1}
    \kp+\km+1\leq \floor*{  \frac{\abs{G}}{(\km+1)^t}}.
  \end{equation*}
  It follows  that
  \begin{align*}
    (\km+1)^t &\leq \frac{\abs{G}}{\kp+\km+1 } <  \frac{\sum_{i=0}^t \binom{n}{i} (\kp+\km)^i }{ \kp+\km} \\
    &= \sum_{i=1}^t \binom{n}{i} (\kp+\km)^{i-1}+\frac{1}{\kp+\km}.
  \end{align*}
  Since both $(\km+1)^t$ and $\sum_{i=1}^t \binom{n}{i}
  (\kp+\km)^{i-1}$ are integers and $\frac{1}{\kp+\km}$ is at most
  $1$, we have
  \[(\km+1)^t \leq \sum_{i=1}^t \binom{n}{i}(\kp+\km)^{i-1}.\]
\qed\end{proof}

Using similar arguments to the previous theorem, the next one
specializes in the case of $n\geq 2t$.

\begin{theorem}\label{thm:lattice-sym}
Let $n\geq 2t$, and $\kp\geq\km\geq 0$.  If $\BALL$ lattice-tiles $\Z^n$ then
\[\frac{(\km+1)^2}{\kp+\km+1} < \binom{n}{t}^{1/t},\]
\end{theorem}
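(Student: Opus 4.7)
The strategy is to pass to the splitting picture via Theorem~\ref{th:tiletolat}, and then exploit the slack $n\geq 2t$ by a pigeonhole/uniqueness argument over sums built from the first $2t$ splitter elements. Theorem~\ref{th:tiletolat} supplies a finite Abelian group $G$ with $\abs{G}=\sum_{j=0}^{t}\binom{n}{j}(\kp+\km)^j$ and a set $S=\set{s_1,\dots,s_n}\subseteq G$ satisfying $G=M\splt_t S$, where $M=[-\km,\kp]^*$; the feature I will use repeatedly is that every nonzero $\ve\in(M\cup\set{0})^n$ with $\wt(\ve)\leq t$ represents a nonzero element of $G$, and distinct such $\ve$ represent distinct elements.

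The core step is to prove that the $(\km+1)^{2t}$ sums $T(\va)\eqdef\sum_{i=1}^{2t}a_is_i$, as $\va$ ranges over $[0,\km]^{2t}$, are all distinct in $G$. Suppose for contradiction that $T(\va)=T(\vb)$ with $\va\neq\vb$, and let $\vd\eqdef\va-\vb$, extended by zeros to length $n$. Since $\km\leq\kp$, every entry of $\vd$ lies in $[-\km,\km]\subseteq M\cup\set{0}$, and $\vd\neq\Zero$ satisfies $\sum d_is_i=0$. If $\wt(\vd)\leq t$, this already contradicts the splitting axioms. Otherwise $t+1\leq\wt(\vd)\leq 2t$; pick any $t$-subset $A$ of $\mathrm{supp}(\vd)$ and set $B\eqdef\mathrm{supp}(\vd)\setminus A$, so that $1\leq\abs{B}\leq t$. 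Rewriting $\sum d_is_i=0$ as $\sum_{i\in A}d_is_i=-\sum_{i\in B}d_is_i$ produces two weight-$\leq t$ representations in $(M\cup\set{0})^n$ of the same element of $G$, which are genuinely different because their supports $A,B$ are disjoint and both nonempty—again contradicting the splitting. Hence $(\km+1)^{2t}\leq\abs{G}$.

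To finish, I would upgrade this to the claimed strict bound. Using $\binom{n}{t}\binom{t}{j}=\binom{n}{j}\binom{n-j}{t-j}$ and the binomial theorem,
\[
\binom{n}{t}(\kp+\km+1)^t=\sum_{j=0}^{t}\binom{n}{j}\binom{n-j}{t-j}(\kp+\km)^j,
\]
which termwise dominates $\abs{G}=\sum_{j=0}^{t}\binom{n}{j}(\kp+\km)^j$, strictly so already at $j=0$ (surplus $\binom{n}{t}-1\geq 1$, since $n\geq 2t\geq 2$). Chaining $(\km+1)^{2t}\leq\abs{G}<\binom{n}{t}(\kp+\km+1)^t$ and extracting $t$-th roots yields the stated inequality. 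The main obstacle is the case $\wt(\vd)\geq t+1$: one has to simultaneously arrange that each half of the split identity is a legitimate weight-$\leq t$ representation (which is precisely where the hypothesis $n\geq 2t$ is used, ensuring $\abs{\mathrm{supp}(\vd)}\leq 2t$ so the support can be cut into two blocks of size at most $t$) and that the two halves are genuinely different vectors; everything else is routine bookkeeping.
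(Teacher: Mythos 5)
Your proof is correct. It rests on the same underlying mechanism as the paper's: both arguments work with the first $2t$ splitter elements supplied by Theorem~\ref{th:tiletolat} and exploit the fact that coefficients of absolute value at most $\km$ lie in $M\cup\set{0}$, so that a single zero relation can be split across two disjoint supports of size at most $t$ to violate the distinctness condition of the splitting. The packaging, however, differs. The paper runs a threshold pigeonhole: it considers sums in which the coefficients of $s_1,\dots,s_t$ range over an interval of length roughly $\frac{\kp+\km+1}{\km+1}\binom{n}{t}^{1/t}$ and those of $s_{t+1},\dots,s_{2t}$ over $[0,\km]$, forces a collision by comparing the count with $\abs{G}$, and concludes that the first interval must exceed $\km+1$ in length; there the disjointness of the two supports is built in by construction. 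You instead prove the cleaner intermediate statement that the $(\km+1)^{2t}$ sums $\sum_{i=1}^{2t}a_is_i$ with $a_i\in[0,\km]$ are pairwise distinct, which needs the small extra step (absent from the paper) of cutting an arbitrary support of size between $t+1$ and $2t$ into two nonempty blocks of size at most $t$; this yields $(\km+1)^{2t}\le\abs{G}$ directly and avoids the ceiling-function bookkeeping. Both routes then invoke the same estimate $\abs{G}<\binom{n}{t}(\kp+\km+1)^t$ (your identity $\binom{n}{t}\binom{t}{j}=\binom{n}{j}\binom{n-j}{t-j}$ just makes the termwise domination explicit), and your chaining delivers the strict inequality cleanly. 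The injectivity bound $(\km+1)^{2t}\le\abs{G}$ is arguably the more quotable intermediate result.
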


\begin{proof}
  If $\BALL$ lattice-tiles $\Z^n$, by Theorem~\ref{th:tiletolat} there
  is an Abelian group $G$ with $\abs{G}= \sum_{i=0}^t \binom{n}{i}
  (\kp+\km)^i$ and a subset $S=\set{s_1,s_2,\ldots, s_n} \subseteq G$
  such that $G=M\splt_t S$, where $M\eqdef[-\km,\kp]^*$. We consider
  the sums
  \[x_1s_1+x_2s_2+\cdots + x_t s_t+y_1s_{t+1}+y_2s_{t+2} +\cdots + y_t s_{2t},\]
  where $0\leq x_i < \frac{\kp+\km+1}{\km+1} \binom{n}{t}^{1/t}$ and
  $0\leq y_i \leq \km$ for $i=1,2,\ldots, t$. The total number of such
  sums is at least $\binom{n}{t}(\kp+\km+1)^t$.  Noting that
  \[\binom{n}{t}(\kp+\km+1)^t =  \sum_{i=0}^t \binom{n}{t}\binom{t}{i} (\kp+\km)^i  > \sum_{i=0}^t \binom{n}{i} (\kp+\km)^i=\abs{G},\]
  there are two sums which are equal. Namely, there are
  \[\va, \va'\in \sparenv*{0, \ceil*{\frac{\kp+\km+1}{\km+1} \binom{n}{t}^{1/t}}-1}^t
  \quad\text{and}\quad \vb,\vb'\in [0,\km]^t,\]
  with $(\va,\vb)\neq (\va',\vb')$, such that
  \begin{multline*}
    \va \cdot (s_1,s_2,\ldots, s_t)+\vb \cdot(s_{t+1},s_{t+2}, \ldots, s_{2t})\\
    = \va' \cdot (s_1,s_2,\ldots, s_t)+\vb' \cdot(s_{t+1},s_{t+2}, \ldots, s_{2t}).
  \end{multline*}
  Let $\vc=\va-\va'$ and $\vd=\vb'-\vb$. Rearranging the terms, we have 
  \[\vc \cdot (s_1,s_2,\ldots, s_t)=\vd \cdot(s_{t+1},s_{t+2}, \ldots, s_{2t}).\] 
  Since $\vc \in \sparenv*{- \ceil*{\frac{\kp+\km+1}{\km+1}
      \binom{n}{t}^{1/t}}+1, \ceil*{\frac{\kp+\km+1}{\km+1}
      \binom{n}{t}^{1/t}}-1}^t$, $\vd\in [-\km,\km]^t$, and
  $(\vc,\vd)\neq(\Zero,\Zero)$, to avoid contradicting the assumption
  $G=M\splt_t S$, necessarily
  \[\km< \ceil*{\frac{\kp+\km+1}{\km+1} \binom{n}{t}^{1/t}}-1,\]
which implies
  \[\km \leq \frac{\kp+\km+1}{\km+1} \binom{n}{t}^{1/t}-1.\]
  The claim now follows by rearranging.
  \qed\end{proof}

  Theorem~\ref{thm:lattice-sym} is particularly useful in an
  asymptotic regime where $t=\Theta(n)$, as shown in the following
  corollary.

  \begin{corollary}
    If $\alpha \leq \frac{t}{n} \leq \frac{1}{2}$, $\kp\geq\km\geq 0$, and
    \[ \frac{(\km+1)^2}{\kp+\km+1} \geq \frac{e}{\alpha},\]
    then $\BALL$ does not lattice-tile $\Z^n$.
  \end{corollary}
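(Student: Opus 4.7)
The plan is to deduce the corollary directly from Theorem~\ref{thm:lattice-sym} via the standard binomial upper bound $\binom{n}{t}\leq (en/t)^{t}$. I will argue by contrapositive: assume that $\BALL$ does lattice-tile $\Z^n$ and derive that $(\km+1)^2/(\kp+\km+1) < e/\alpha$, contradicting the hypothesis.

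First, since the hypothesis gives $t/n\leq 1/2$, we have $n\geq 2t$, so Theorem~\ref{thm:lattice-sym} applies and yields
\[
\frac{(\km+1)^2}{\kp+\km+1} < \binom{n}{t}^{1/t}.
\]
Next I would invoke the elementary inequality $\binom{n}{t}\leq (en/t)^t$, which holds for all $1\leq t\leq n$ and is an immediate consequence of $t!\geq (t/e)^t$. Taking $t$-th roots gives $\binom{n}{t}^{1/t}\leq en/t$.

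Finally, I would use the lower bound $t/n\geq \alpha$ from the hypothesis to conclude $en/t\leq e/\alpha$. Chaining the three inequalities produces
\[
\frac{(\km+1)^2}{\kp+\km+1} < \binom{n}{t}^{1/t} \leq \frac{en}{t} \leq \frac{e}{\alpha},
\]
which directly contradicts the assumption $(\km+1)^2/(\kp+\km+1)\geq e/\alpha$. Hence no such lattice tiling can exist.

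There is no real obstacle here: the whole argument is a two-line application of Theorem~\ref{thm:lattice-sym} combined with the textbook bound on binomial coefficients. The only thing worth noting is the need to verify $n\geq 2t$, which is exactly the hypothesis $t/n\leq 1/2$, ensuring Theorem~\ref{thm:lattice-sym} is applicable.
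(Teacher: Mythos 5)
Your argument is correct and is essentially the same as the paper's: both reduce the corollary to Theorem~\ref{thm:lattice-sym} via the chain $\binom{n}{t}^{1/t}\leq en/t\leq e/\alpha$, with the paper merely writing the contrapositive as a single string of inequalities. No issues.
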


  \begin{proof}
    We observe that
    \[
    \frac{(\km+1)^2}{\kp+\km+1} \geq \frac{e}{\alpha}
    \geq \frac{ne}{t}>\binom{n}{t}^{1/t},\]
    and the claim now follows by Theorem~\ref{thm:lattice-sym}.
\qed\end{proof}

We continue on to a few more specific cases. The next two theorems
deal with the analogue of semi-crosses when $t=1$, namely, the case of
$\km=0$. First a technical lemma is required.

\begin{lemma}\label{lem-settwice}
Let $A\subseteq [0,\binom{n}{t}-1]$ be a subset of size $\binom{n-1}{t}$.  If 
\[\parenv*{\frac{n}{4t}-1}\binom{n-1}{t-1}> \frac{1}{2},\]
 then $A$ contains two elements $a$ and $b$ such that $b=2a\neq 0$.
\end{lemma}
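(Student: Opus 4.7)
The plan is to argue by contradiction through a simple doubling-pairs counting. Assume $A$ contains no pair $a,b$ with $b=2a\neq 0$. Set $N\eqdef \binom{n}{t}$. By Pascal's identity, $\binom{n}{t}=\binom{n-1}{t}+\binom{n-1}{t-1}$, so the complement $B\eqdef [0,N-1]\setminus A$ has size exactly $m\eqdef \binom{n-1}{t-1}$. I will show $|B|$ must be at least roughly $N/4$, which forces the inequality in the hypothesis to fail.

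Next, I would enumerate the ``doubling pairs'' of the form $P_a\eqdef \set{a,2a}$ ranging over odd integers $a$ with $1\leq a$ and $2a\leq N-1$; denote the number of such $a$ by $K$. By the contradiction hypothesis, each $P_a$ contains at least one element of $B$. The crucial observation is that these pairs are pairwise disjoint: the smaller element of $P_a$ is odd (and equals $a$), while the larger element is even (and equals $2a$), so comparing any two pairs $P_a,P_{a'}$ with $a\neq a'$ in either position gives an immediate contradiction. Because the pairs are disjoint and each forces one complement element, $|B|\geq K$.

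Then I would estimate $K$. Writing $K=\lceil \lfloor (N-1)/2\rfloor/2\rceil$ and doing a short case analysis on $N\bmod 4$ yields $K\geq (N-2)/4$ in every case. Combining with $|B|=m$, this gives $m\geq (N-2)/4$, i.e.,
\[ \binom{n-1}{t-1}\geq \frac{\binom{n}{t}-2}{4}.\]
Substituting $\binom{n}{t}=\tfrac{n}{t}\binom{n-1}{t-1}$ and rearranging produces $\binom{n-1}{t-1}\parenv*{\tfrac{n}{4t}-1}\leq \tfrac{1}{2}$, contradicting the assumed hypothesis.

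There is no real obstacle here; the only point requiring care is selecting the right family of ``witness pairs'' so the bound is tight enough to land exactly at the $1/2$ in the hypothesis rather than at a weaker constant. Restricting to pairs indexed by odd $a$ (as opposed to, say, using the entire chain $a,2a,4a,\dots$ and losing a factor in the rounding) is exactly what matches the constant $1/2$, and the parity argument for disjointness is what makes this approach clean.
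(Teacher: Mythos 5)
Your proof is correct and rests on the same pigeonhole idea as the paper's: both count doubling pairs $\{i,2i\}$ inside $[0,\binom{n}{t}-1]$ and show that $A$ is too large (equivalently, its complement of size $\binom{n-1}{t-1}$ is too small) to avoid containing such a pair, arriving at the same threshold $\frac{1}{4}\bigl(\binom{n}{t}-2\bigr)$. The only difference is bookkeeping: the paper takes all pairs $\{i,2i\}$ for $1\leq i\leq \lfloor(\binom{n}{t}-1)/2\rfloor$, which overlap (hence its $2m-\lfloor m/2\rfloor$ count for the union) and bounds $\abs{A\cap B}$ by inclusion--exclusion, whereas you restrict to odd $i$ to obtain a disjoint family and count the complement directly --- a slightly cleaner route to the same estimate.
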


\begin{proof}
Define \[m\eqdef  \floor*{\frac{1}{2}\parenv*{\binom{n}{t} -1}},\] and
\[B \eqdef \bigcup_{i=1}^{m}  \{i,2i\}.\] 
Then $B$ is a subset of $[0,\binom{n}{t}-1]$ with
$\abs{B}=2m-\floor{m/2}$. Consider the intersection of $A$ and $B$,
\begin{align*}
  \abs{A\cap B} &= \abs{A}+\abs{B} -\abs{A\cup B} \geq \binom{n-1}{t} + 2m-\lfloor m/2\rfloor - \binom{n}{t} \\
  &\geq m+ m/2 - \binom{n-1}{t-1}
  \geq m+ \frac{1}{2} \cdot\frac{ \binom{n}{t} -2}{2}- \binom{n-1}{t-1} \\
  &= m +  \parenv*{\frac{n}{4t}-1}\binom{n-1}{t-1} -\frac{1}{2} > m.
\end{align*}
Then $A$ contains at least one pair, $i$ and $2i$, from $B$. 
\qed\end{proof}

\begin{theorem}
Let $2\leq t < n/4$ and $\kp> \km=0$.  Then $\cB(n,t,\kp,0)$
cannot lattice-tile $\Z^n$ when
\[\kp\geq  2 \binom{n}{t}-2.\] 
\end{theorem}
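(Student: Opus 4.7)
The plan is to derive a contradiction via Lemma~\ref{lem-settwice}. Assume $\cB(n,t,\kp,0)$ lattice-tiles $\Z^n$; then Theorem~\ref{th:tiletolat} produces a splitting $G = [1,\kp] \splt_t S$ with $S = \set{s_1, \ldots, s_n}$. Because $\kp \geq 2\binom{n}{t} - 2$, every $a \in [0, \binom{n}{t}-1]$ satisfies $a, 2a \in [0, \kp]$, so the domain of Lemma~\ref{lem-settwice} is naturally compatible with the splitting, and the hypothesis of the lemma holds since $t < n/4$ and $t \geq 2$.

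To each $a \in [1, \kp]$ I attach an invariant extracted from the splitting. A natural choice is to fix $T \in \binom{\set{2,\ldots,n}}{t}$ and let $\mathbf{g}_a$ denote the unique weight-at-most-$t$ splitting representation of the element $\vs_T - a s_1 \in G$. The natural expression $-a\ve_1 + \One_T$ for this element has weight $t + 1$ and so cannot be a splitting representation. Comparing it with $\mathbf{g}_a$, writing their difference as $\mathbf{w}^+ - \mathbf{w}^-$ with disjoint non-negative supports and applying splitting uniqueness to the identity $\mathbf{w}^+ \cdot S = \mathbf{w}^- \cdot S$, one forces $\mathbf{g}_a$ to have first coordinate $0$, weight exactly $t$, support $U_a \in \binom{\set{2,\ldots,n}}{t}$, and entries at positions in $T \cap U_a$ that are at least $2$.

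Next I define a set $A \subseteq [0,\binom{n}{t}-1]$ of size $\binom{n-1}{t}$. A natural candidate is to enumerate the $\binom{n}{t}$ $t$-subsets of $[n]$ by the indices in $[0,\binom{n}{t}-1]$ and take $A$ to consist of those indices whose associated subset avoids coordinate $1$ (equivalently, lies in $\binom{\set{2,\ldots,n}}{t}$); this choice should be compatible with the structural invariant $U_a$ just extracted. Lemma~\ref{lem-settwice} then yields $a, 2a \in A$ with $a \neq 0$.

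The contradiction is extracted from the identity $(\vs_T - a s_1) - (\vs_T - 2a s_1) = a s_1$. Manipulating the representations $\mathbf{g}_a$ and $\mathbf{g}_{2a}$ (both supported in $\set{2,\ldots,n}$), the difference gives $(\mathbf{g}_a - \mathbf{g}_{2a}) \cdot (s_2, \ldots, s_n) = a s_1$, yielding a representation of $a s_1$ involving only $s_2, \ldots, s_n$. Combined with the trivial representation $(a, 0, \ldots, 0)$ of $a s_1$, this produces two distinct weight-at-most-$t$ splitting representations of the same non-zero element, contradicting splitting uniqueness. The main obstacle is ensuring that the definition of $A$ genuinely produces $a, 2a \in A$ that force this contradiction: the structural invariants must be chosen so that $\mathbf{g}_{2a} - \mathbf{g}_a$ decomposes into non-negative parts of weight at most $t$ each, which requires carefully controlling the supports $U_a$ and $U_{2a}$.
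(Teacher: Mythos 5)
Your proposal correctly identifies Lemma~\ref{lem-settwice} and the overall strategy of the paper's proof (locate $a$ and $2a$ among a collection of $\binom{n-1}{t}$ splitting-derived integers and extract two legal representations of one element), but the objects you feed into that strategy do not work, and the two places you flag as delicate are genuine gaps rather than technicalities. First, the structural claim about $\mathbf{g}_a$ is not forced: from $(\mathbf{g}_a+a\ve_1)\cdot(s_1,\dots,s_n)=\One_T\cdot(s_1,\dots,s_n)$ you can only conclude that $\mathbf{g}_a+a\ve_1$ is not a legal representation, and it can fail to be legal because its first entry $g_1+a$ exceeds $\kp$ (it can be as large as $2\kp$) rather than because its weight is $t+1$; nothing in your argument excludes, say, $g_1=\kp$ and $a=1$, so you cannot conclude that $\mathbf{g}_a$ has first coordinate $0$ or support of size exactly $t$ inside $\{2,\dots,n\}$. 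Second, and more fundamentally, your set $A$ carries no information from the splitting: ``the indices whose associated subset avoids coordinate $1$'' is a fixed combinatorial set of size $\binom{n-1}{t}$, so applying Lemma~\ref{lem-settwice} to it yields a pair $a,2a$ of labels with no algebraic content. What is needed is an \emph{injective} map from the $\binom{n-1}{t}$ $t$-subsets of $\{2,\dots,n\}$ to integers in $[0,\binom{n}{t}-1]$ defined by relations in $G$; your map goes the other way (from coefficients $a$ to supports $U_a$), and no injectivity is available. Third, your final cancellation needs $a\ve_1+\mathbf{g}_{2a}-\mathbf{g}_a$ to split into two parts of weight at most $t$ with coefficients in $[0,\kp]$, which requires $U_a\cap U_{2a}\neq\varnothing$ together with a common coordinate where $\mathbf{g}_a$ dominates $\mathbf{g}_{2a}$; you call this ``the main obstacle'' and it is not resolved.

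For comparison, the paper avoids all three issues. A pigeonhole count on sums $x_1s_1+\sum_j x_{i_j}s_{i_j}$ with $x_1\in[0,\binom{n}{t}-1]$ produces, for every $t$-subset $\{i_1,\dots,i_t\}\subseteq\{2,\dots,n\}$, a nonzero relation $v_1s_1+\sum_j v_{i_j}s_{i_j}=0$ with \emph{all} coefficients in $[0,\binom{n}{t}-1]$ (pushing every coefficient into this range takes a separate inductive step). The first coefficients $v_1$ are pairwise distinct over the $\binom{n-1}{t}$ subsets---otherwise subtracting two relations gives two equal legal representations with different supports---and this set of $v_1$'s is the $A$ to which Lemma~\ref{lem-settwice} is applied. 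Once $v_1=2v_1'\neq 0$, eliminating $s_1$ amounts to doubling the second relation, and the hypothesis $\kp\geq 2\binom{n}{t}-2$ is used precisely so that the doubled coefficients $2v'_{i_j}$ still lie in $[0,\kp]$; the resulting two legal representations live on two different $t$-subsets, so no support-overlap condition is ever needed. To make your version work you would essentially have to rebuild this machinery.
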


\begin{proof}
  By Theorem~\ref{th:tiletolat}, suppose to the contrary that there is
  an Abelian group $G$ with $\abs{G}= \sum_{i=0}^t \binom{n}{i} \kp^i$ and
  a subset $S=\set{s_1,s_2,\ldots, s_n} \subseteq G$ such that
  $G=M\splt_t S$, where $M\eqdef[1,\kp]$. We consider the sums
  \[x_1s_1+x_2s_2+\cdots + x_t s_t+x_{t+1}s_{t+1},\]
  where $0\leq x_1 < \binom{n}{t}$ and $0\leq x_i \leq \kp$ for
  $i=2,3,\ldots, t+1$. The total number of such sums is $\binom{n}{t}
  (\kp+1)^t$.  Noting that
  \[ \binom{n}{t} (\kp+1)^t =  \sum_{i=0}^t  \binom{n}{t}  \binom{t}{i}  \kp^i    > \sum_{i=0}^t \binom{n}{i} \kp^i=\abs{G},\]
  there are two sums which are equal. Namely, there are two distinct
  vectors, $\va=(a_1,a_2,\ldots,a_{t+1})$ and $\va'=(a_1',a_2',\ldots,
  a_{t+1}')$, from $[0,\binom{n}{t}-1]\times [0, \kp]^{t}$, such that
  \[ a_1 s_1+a_2s_2+\ldots+a_{t+1} s_{t+1}=a_1' s_1+a_2's_2+\ldots+a_{t+1}'s_{t+1}.\]

  W.l.o.g., assume $a_1 \geq a_1'$.  Let $b_i=a_i-a_i'$ for
  $i=1,2,\ldots, t+1$. Rearranging the terms, we have
  \begin{equation}\label{eq-idc-1}
    b_1 s_1+b_2s_2+\ldots+b_{t+1} s_{t+1}=0,
  \end{equation}
  where $\vb=(b_1,b_2,\ldots, b_{t+1})$ is a non-zero vector from
  $[0,\binom{n}{t}-1]\times [-\kp,\kp]^t$. Since $\binom{n}{t}-1 \leq
  \kp$, to avoid contradicting the assumption $G=M\splt_t S$,
  necessarily $b_i\geq 0$ for all $i=2,3,\ldots,t+1$, i.e., $\vb \in
  [0, \kp]^{t+1}$.

  We now claim that there is a non-zero vector $\vv \in [0,
    \binom{n}{t}-1]^{t+1}$ such that $\vv\cdot (s_1,s_2,\ldots,
  s_{t+1})=0$. As a first step, we show that there is a non-zero vector
  $\vv=(v_1,v_2,\ldots, v_{t+1})\in [0, \kp]^{t+1}$ such that $v_1,v_2
  < \binom{n}{t}$ and $\vv\cdot (s_1,s_2,\ldots,
  s_{t+1})=0$. In~\eqref{eq-idc-1}, if $b_2 < \binom{n}{t}$, then
  $\vb$ is the desired vector. Otherwise, $b_2 \geq \binom{n}{t}$. By
  symmetry (repeating the same arguments arriving in
  \eqref{eq-idc-1}), there is a non-zero vector $\vc=(c_1,c_2,\ldots,
  c_{t+1}) \in [0, \kp]^{t+1}$ with $c_2 < \binom{n}{t}$ such that
  $\vc\cdot (s_1,s_2,\ldots, s_{t+1})=0$. If $c_1 \geq \binom{n}{t}$,
  we consider the equation
  \[  (b_2-c_2)s_2+ b_3s_3+b_4s_4+ \cdots + b_{t+1}s_{t+1}= (c_1-b_1)s_1+ c_3s_3+c_4s_4+ \cdots + c_{t+1}s_{t+1},\]
  which is obtained by rearranging $\vb \cdot (s_1,s_2,\ldots,
  s_{t+1}) = \vc \cdot (s_1,s_2,\ldots, s_{t+1}).$ Note that
  $0<c_1-b_1 \leq \kp$, $0<b_2-c_2\leq \kp$, and $b_i,c_i\in [0,\kp]$
  for all $i=3,4,\ldots,t+1$. This contradicts the assumption
  $G=M\splt_t S$. Thus, necessarily, $c_1 <\binom{n}{t}$ and $\vc$ is
  the desired vector.  By using induction on the first $j$ elements,
  $s_1,s_2,\ldots, s_j$, we are able to show our claim.

  Extending the arguments presented thus far, for any $2\leq
  i_1<i_2<\cdots <i_t \leq n$, there is a non-zero vector
  \[\vv^{i_1,i_2,\ldots, i_t} =(v_1^{i_1,i_2,\ldots, i_t}, v_{i_1}^{i_1,i_2,\ldots,i_t}, \ldots, v_{i_t}^{i_1,i_2,\ldots, i_t}) \in \left[0, \binom{n}{t}-1\right]^{t+1}\] such that  
   \begin{equation*}\label{eq-ls}
   v_1^{i_1,i_2,\ldots, i_t}s_1+ v_{i_1}^{i_1,i_2,\ldots, i_t} s_{i_1}+\cdots +  v_{i_t}^{i_1,i_2,\ldots, i_t}s_{i_t} =0.
   \end{equation*}

   Take any $2\leq i_1'<i_2'<\cdots <i_t' \leq n$ such that
   $(i_1,i_2,\dots,i_t)\neq(i_1',i_2',\dots,i_t')$.  If there are two
   integers $v_1^{i_1,i_2,\ldots, i_t}$ and $v_1^{i_1',i_2',\ldots,
     i_t'}$ which are equal, then we have
   \begin{multline*}
     v_{i_1}^{i_1,i_2,\ldots, i_t} s_{i_1}+v_{i_2}^{i_1,i_2,\ldots, i_t} s_{i_2}+\cdots +  v_{i_t}^{i_1,i_2,\ldots, i_t}s_{i_t}\\
     =v_{i_1'}^{i_1',i_2',\ldots, i_t'} s_{i_1'}+v_{i_2'}^{i_1',i_2',\ldots, i_t'} s_{i_2'}+\cdots +  v_{i_t'}^{i_1',i_2',\ldots, i_t'}s_{i_t'}.
   \end{multline*}
   To avoid contradicting the assumption that $G=M\splt_t S$,
   necessarily, $v_{i_j}^{i_1,i_2,\ldots,
     i_t}=v_{i_j'}^{i_1',i_2',\ldots, i_t'}=0$ for all $1\leq j \leq
   t$, which in turn implies $v_{1}^{i_1,i_2,\ldots, i_t}=0$. This
   contradicts the fact that $\vv^{i_1,i_2,\ldots, i_t}$ is a non-zero
   vector.  Therefore, the $\binom{n-1}{t}$ integers
   $v_1^{i_1,i_2,\ldots, i_t}$ must be pairwise distinct.
    
   Note that when $2\leq t < n/4$, we have  
   \[\parenv*{\frac{n}{4t}-1}\binom{n-1}{t-1}> \frac{1}{2}.\] 
   By Lemma~\ref{lem-settwice}, there are
   $v_1^{i_1,i_2,\ldots, i_t}$ and $v_1^{i_1',i_2',\ldots, i_t'}$ such
   that $v_1^{i_1,i_2,\ldots, i_t} =2 v_1^{i_1',i_2',\ldots,
     i_t'}\neq 0$. Therefore,
   \begin{multline*}
     v_{i_1}^{i_1,i_2,\ldots, i_t}
   s_{i_1}+v_{i_2}^{i_1,i_2,\ldots, i_t} s_{i_2}+\cdots +
   v_{i_t}^{i_1,i_2,\ldots, i_t}s_{i_t}\\
   =2v_{i_1'}^{i_1',i_2',\ldots,
     i_t'} s_{i_1'}+2v_{i_2'}^{i_1',i_2',\ldots, i_t'} s_{i_2'}+\cdots
   + 2 v_{i_t'}^{i_1',i_2',\ldots, i_t'}s_{i_t'}.
   \end{multline*}
   Note that $\{i_1,i_2,\ldots, i_t\}\neq \{i_1',i_2',\ldots, i_t'\}$,
   $0\leq v_{i_j}^{i_1,i_2,\ldots, i_t} \leq \binom{n}{t}-1 \leq \kp$
   and $0\leq 2v_{i_j'}^{i_1',i_2',\ldots, i_t'} \leq 2\binom{n}{t}-2
   \leq \kp$. To avoid contradicting the assumption, necessarily
   $v_{i_j}^{i_1,i_2,\ldots, i_t}=v_{i_j'}^{i_1',i_2',\ldots, i_t'}=0$
   for all $1\leq j\leq t$, and so $v_{1}^{i_1,i_2,\ldots,
     i_t}=0$. This contradicts the fact that $\vv^{i_1,i_2,\ldots,
     i_t}$ is a non-zero vector, which completes our proof.
   \qed\end{proof}

   Unlike the other proofs in this section, the next one uses a
   geometric argument.
   
\begin{theorem}
Let $\frac{2}{3}(n-1)\leq t \leq n-3$. Then $\cB(n,t,\kp,0)$ cannot
lattice-tile $\Z^n$ when $\kp\geq 2$.
\end{theorem}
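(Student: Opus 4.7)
I would argue by contradiction, assuming $\cB\eqdef\cB(n,t,\kp,0)$ lattice-tiles $\Z^n$ by a lattice $\Lambda$ with $\Zero\in\Lambda$. Write $\tau\eqdef n-t$, so the hypotheses translate into $\tau\geq 3$ (from $t\leq n-3$) and $3\tau\leq n+2$ (from $t\geq\frac{2}{3}(n-1)$). Since $\wt(\One)=n>t$, there exist $\va\in\Lambda\setminus\{\Zero\}$ and $\ve^*\in\cB$ with $\One=\va+\ve^*$. Put $I\eqdef\mathrm{supp}(\ve^*)$, $F\eqdef[n]\setminus I$, and $f\eqdef|F|\geq\tau$; then $a_i=1$ on $F$, while $a_i=1-e^*_i\in[1-\kp,0]$ on $I$, so $\va$ is forced to sit just below $\One$.

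I would first dispose of the sub-case $f\leq t$ directly. Let $\vx\eqdef\chi_F$ be the $0/1$-indicator of $F$. Then $\vx\in\cB$ because $\wt(\vx)=f\leq t$ and its entries lie in $\{0,1\}\subseteq[0,\kp]$. Moreover, $\vx-\va$ equals $0$ on $F$ and $e^*_i-1\in[0,\kp-1]$ on $I$, with weight at most $|I|\leq t$, so $\vx-\va\in\cB$ as well. Thus $\vx\in(\Zero+\cB)\cap(\va+\cB)$, contradicting uniqueness of the tiling.

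It remains to handle $f\geq t+1$ (equivalently $|I|\leq\tau-1$). Mirroring the strategy of the preceding theorem, I would introduce the vector $\vv\eqdef 2\chi_F+\chi_I$, i.e., $v_i=2$ on $F$ and $v_i=1$ on $I$; the hypothesis $\kp\geq 2$ is what makes the $2$-valued entries admissible and is essential to push $\vv$ out of $\va+\cB$. The Hamming distances from $\vv$ to both $\Zero$ and $\va$ are $n\geq t+1$ (on $F$, $2\neq 1$; on $I$, $1\neq a_i\leq 0$), so $\vv\notin(\Zero+\cB)\cup(\va+\cB)$, and hence some $\vb\in\Lambda\setminus\{\Zero,\va\}$ satisfies $\vv-\vb\in\cB$. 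This forces $b_i\in[2-\kp,2]$ on $F$, $b_i\in[1-\kp,1]$ on $I$, and agreement $b_i=v_i$ at $\geq n-t$ positions. I would then split on $c\eqdef|\{i\in F:b_i=2\}|$: after reordering the $F$-coordinates so the non-matching positions appear together, I would construct a witness $\vy$ that lies in $\vb+\cB$ and simultaneously in $\Zero+\cB$ (when $c$ is large) or in $\va+\cB$ (when $c$ is small), by aligning $\vy$ with $\vb$ on the matched block and with $\Zero$ or with $\va$ on the rest.

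The main obstacle is exactly this last step: the threshold for the case split and the entries of $\vy$ must be chosen so that the weight bound $\wt(\,\cdot\,)\leq t$ holds inside both translates at once. Both numerical hypotheses enter here: $\tau\geq 3$ provides enough room to accommodate the non-matching coordinates without overrunning the weight budget in $\vy$, while $3\tau\leq n+2$ is the quantitative condition that lets a single $\vy$ satisfy both weight bounds simultaneously. I expect the two sub-cases to parallel Cases~1 and~2 in the preceding theorem's proof, the chief complication being the asymmetry of $\vb$'s entries on $F$ versus $I$ that is forced by $\km=0$.
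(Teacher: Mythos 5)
Your reduction to the case where $\va$ has at least $t+1$ coordinates equal to $1$ is fine and matches the paper's (which imports it from the first paragraphs of the proof of Theorem~\ref{thm:nge2t-1}): your $\chi_F$ witness for the sub-case $f\leq t$ is exactly the right observation. The genuine gap is the step you yourself flag as the ``main obstacle,'' and it is not merely a matter of tuning thresholds: the strategy of producing a single witness $\vy$ lying in two of the three balls $\Zero+\cB$, $\va+\cB$, $\vb+\cB$ provably cannot be completed. Concretely, take $\kp=2$, $n=9$, $t=6$ (so $\tau=3$ and the hypotheses hold), and $\va=\One$, so that $F=[n]$ and your $\vv=(2,2,\dots,2)$. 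Then $\vb=(0,2,2,2,2,2,2,2,2)$ satisfies all the constraints you derive ($b_i\in[2-\kp,2]$, agreement with $\vv$ in at least $n-t$ positions, and $\vv\in\vb+\cB$), yet the three balls $\Zero+\cB$, $\One+\cB$, $\vb+\cB$ are pairwise disjoint: any $\vy\in\cB\cap(\vb+\cB)$ would need $y_i\in[0,2]\cap[2,4]=\set{2}$ for $i\geq 2$, forcing $\wt(\vy)\geq 8>t$, and any $\vy\in(\One+\cB)\cap(\vb+\cB)$ would need $y_i\in[2,3]$ for $i\geq 2$, leaving at most one coordinate equal to $1$ and hence $\wt(\vy-\One)\geq 7>t$. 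So in your ``$c$ large'' branch there is simply no $\vy$ to be found, for either target ball.

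The paper's proof shows why more machinery is unavoidable. It chooses the probe vector $\vv=(0,\dots,0,1,\dots,1)$ with $\tau-1$ zeros and $t+1$ ones (not $2\chi_F+\chi_I$), which pins $\vb$ down much more rigidly: it forces $b_i=1$ on the last $t+1$ coordinates and forces some $b_i=-\kp$ among the first $\tau-1$. It then introduces a whole family of $p+1$ further probe vectors $\vu_\ell$, each of which must be covered by a new lattice point $\vc_\ell\notin\set{\Zero,\va,\vb}$, shows each $\vc_\ell$ must carry a $-\kp$ in one of only $p$ available positions, and concludes by pigeonhole that two of the $\vc_\ell$ coincide while provably differing in some entry. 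Your proposal, modeled on the three-point argument of the $\kp=\km$ theorem, stops at the point where that template breaks down; to repair it you would need to introduce such an auxiliary family and a counting argument, i.e., essentially the paper's construction.
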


\begin{proof}
  Suppose to the contrary that there is a lattice $\Lambda\subseteq
  \Z^n$ such that $\cB$ tiles $\Z^n$ by $\Lambda$.    According to the first
  two paragraphs in the proof of Theorem~\ref{thm:nge2t-1}, we may
  assume that $\One \in \va+\cB$, where
  \[\va\eqdef (\underbrace{1,1,\ldots,1}_{t+1}, a_{t+2},\ldots,a_n)   \in \Lambda,\]
  where $1-\kp\leq a_i \leq 1$ for $t+2 \leq i \leq n$. 

  Let $\tau \eqdef n-t$. The assumption $\frac{2}{3}(n-1)\leq t \leq
  n-3$ implies $\tau \geq 3$ and $2\tau-2\leq t$. We consider the
  vector
  \[\vv \eqdef (\underbrace{0,0,\ldots,0}_{\tau-1}, \underbrace{1,1,\ldots,1}_{t+1}).\]
  Since $\wt(\vv)= t+1$ and $\tau-1\geq 1$, neither $\cB$ nor
  $\va+\cB$ contains $\vv$. Thus there is another vector
  \[\vb=(b_1,b_2,\ldots, b_n)\in \Lambda\]
  such that $\vv \in \vb+\cB$, where $-\kp\leq b_i \leq 0$ for $1\leq
  i\leq \tau-1$ and $1-\kp \leq b_i \leq 1$ for $\tau \leq i \leq
  n$. In the following, we further narrow down the range of $b_i$.
  \begin{enumerate}
  \item
    $b_i=1$ for all $\tau \leq i \leq n$. Otherwise, w.l.o.g., assume
    $b_\tau\leq 0$. Note that $\vv\in \vb+\cB$. Then
    $(\underbrace{0,0,\ldots,0}_{\tau},\underbrace{1,1,\ldots,1}_t)\in
    \vb+\cB$, contradicting
    $(\underbrace{0,0,\ldots,0}_{\tau},\underbrace{1,1,\ldots,1}_t)
    \in \cB$.
  \item
    There is at least one $b_i=-\kp$ for some $1\leq i \leq
    \tau-1$. Otherwise, $-\kp< b_i \leq 0$ for all $1\leq i\leq
    \tau-1$. Note that $\tau-1 \leq t$ and we have shown $b_i=1$ for
    all $\tau \leq i \leq n$. It follows that $\One \in \vb+\cB$,
    which contradicts $\One\in \va+\cB$.
%\item There is at least one $b_i=0$ for some $1\leq i \leq \tau-1$. Otherwise, $b_i\leq -1$ for all $1\leq i\leq \tau-1$. We consider the vector 
%\[ \vu=(\underbrace{1,1,\ldots,1}_{\tau-1}, \underbrace{0,0,\ldots, 0}_{\tau-1}, \underbrace{1,1,\ldots,1}_{n-2\tau+2}).\]
%It is easy to see that  $\vu$ is not contained in $\cB$, $\va+\cB$ nor $\vb+\cB$.   Assume  $\vu\in \vc+\cB$ for some $\vc\in \Lambda$. Since $2\tau-2\leq t+1$, according to the symmetry, $\vc$ has the form 
%\[(\underbrace{1,1,\ldots,1}_{\tau-1},c_{\tau},c_{\tau+1}, \ldots, c_{2\tau-2}, \underbrace{1,1,\ldots,1}_{n-2\tau+2}),\]
%where $-\kp \leq c_i\leq 0$ for $\tau\leq i\leq 2\tau-1$ and at least one of them is $-\kp$. W.l.o.g., assume $c_\tau=-\kp.$  Then we look at the vector 
%\[\vb+\vc=(b_1+1,b_2+1, \ldots, b_{\tau-1}+1, 1+c_\tau, 1+c_{\tau+1}, \ldots, 1+c_{2\tau-2}, \underbrace{2,2,\ldots,2}_{n-2\tau+2}) \in \Lambda.\] 
%Note that $-\kp+1 \leq  b_i+1 \leq 0$ for $1\leq i \leq \tau-1$, $1+c_\tau=1-\kp \leq 0$, $1-\kp \leq 1+c_i \leq 1$ for $\tau+1 \leq i \leq 2\tau-2$, and $2\tau-2 \leq t$. It follows that 
%\[(\underbrace{0,0,\ldots,0}_{\tau}, \underbrace{1,1,\ldots,1}_{\tau-2},\underbrace{2,2,\ldots,2}_{n-2\tau+2})\in  \vb+\vc+\cB.\]
%Note that $\kp\geq 2$, we also have 
%\[(\underbrace{0,0,\ldots,0}_{\tau}, \underbrace{1,1,\ldots,1}_{\tau-2},\underbrace{2,2,\ldots,2}_{n-2\tau+2})\in \cB.\]
%Thus, $\vb+\vc=\Zero$. However, the $\tau$-th entry of $\vb+\vc$ is $1-\kp$, which is non-zero as $\kp\geq 2$, a contradiction.
  \end{enumerate}
  
  According to the argument above, by permuting the first $\tau-1$
  elements of $\vb$, we may assume
  \[ \vb=(-\kp,\underbrace{0,\ldots,0}_p,  \underbrace{b_{p+2,},\ldots, b_{\tau-1}}_{q}, \underbrace{1,1,\ldots,1}_{t+1}),\]
  where $p,q\geq 0$, $p+q=\tau-2$ and $-\kp\leq b_i \leq -1$ for $p+2
  \leq i\leq \tau-1$.

  Now, for $0\leq \ell \leq p$, define
  \[\vu_\ell \eqdef(1,\underbrace{0,0,\ldots,0}_p,\underbrace{1,1,\ldots,1}_{q+ \ell}, \underbrace{0,0,\ldots,0}_{q+1}, \underbrace{1,1,\ldots,1}_{n-p-2q-\ell-2}). \]
  There are $n-p-q-1=t+1$ ones in $\vu_\ell$ and so $\vu_\ell$ is not
  contained in $\cB$. Noting that $1+p+q+\ell+q+1 \leq
  2+2(p+q)=2\tau-2\leq t+1$, there are $\tau-1$ zeros in the first
  $t+1$ entries of $\vu_\ell$, and so $\vu_\ell\not\in \va+\cB$.  The
  first entry of $\vu_\ell$ is 1 while the first entry of $\vb$ is
  $-\kp$. Thus, $\vu_\ell\not\in \vb+\cB$.

  Assume $\vu_\ell\in \vc_\ell+\cB$ for some $\vc_\ell\in
  \Lambda$. According to the argument above, necessarily $\vc_\ell
  \not \in \{\Zero, \va,\vb\}$. Since both $\vu_\ell$ and $\vv$ have
  $\tau-1$ zeros in the first $t+1$ entries and ones in all the other
  entries and $\va$ has ones in the first $t+1$ entries, according to
  the symmetry, $\vc_\ell$ has the same form as $\vb$, namely,
  \[\vc_\ell =(1,\underbrace{*,*,\ldots,*}_p,\underbrace{1,1,\ldots,1}_{q+ \ell}, \underbrace{*,*,\ldots,*}_{q+1}, \underbrace{1,1,\ldots,1}_{n-p-2q-\ell-2}),\]
  where the entries marked with $*$ are in $[-\kp,0]$ and at least one
  of them is $-\kp$.

  We claim that all the last $q+1$ entries marked with $*$ in
  $\vc_\ell$ should be $0$. Otherwise, w.l.o.g., assume the first of
  them is negative, i.e.,
  \[\vc_\ell =(1,\underbrace{*,*,\ldots,*}_p,\underbrace{1,1,\ldots,1}_{q+ \ell}, \underbrace{-x,*,\ldots,*}_{q+1}, \underbrace{1,1,\ldots,1}_{n-p-2q-\ell-2}),\]
  where $1 \leq x\leq \kp$.  Then
  \begin{multline*}
    \vb+\vc_\ell=(1-\kp,\underbrace{*,*,\ldots,*}_p,\underbrace{b_{p+2}+1, b_{p+3}+1,\ldots,b_{\tau-1}+1}_{q}, \underbrace{2,2,\ldots,2}_{\ell},
    \\
    \underbrace{1-x,\oast,\ldots,\oast}_{q+1}, \underbrace{2,2,\ldots,2}_{n-p-2q-\ell-2}),
  \end{multline*}
  where the entries marked with $*$ are in $[-\kp,0]$ and the entries marked with $\oast$ are in
  $[1-\kp,1]$.  Note that $-\kp\leq b_i \leq -1$ for $p+2 \leq i\leq
  \tau-1$, and $1+p+q+q+1 \leq 2(\tau-2)+2=2\tau-2\leq t$. It follows
  that
  \[ (\underbrace{0,0,\ldots,0}_{1+p+q = \tau-1}, \underbrace{2,2,\ldots,2}_\ell, 0, \underbrace{1,1,\ldots,1}_q, \underbrace{2,2,\ldots,2}_{n-p-2q-\ell-2} ) \in  \vb+\vc_\ell+\cB.\]
  Since $\kp\geq 2$, the vector above is also contained in $\cB$. Then
  we got $\vb+\vc_\ell=\Zero,$ which contradicts that the first entry
  of $\vb+\vc_\ell$ is $1-\kp\leq -1.$ Therefore,
  \[\vc_\ell =(1,\underbrace{*,*,\ldots,*}_p,\underbrace{1,1,\ldots,1}_{q+ \ell}, \underbrace{0,0,\ldots,0}_{q+1}, \underbrace{1,1,\ldots,1}_{n-p-2q-\ell-2}).\]
  Recall that the entries marked with $*$ are in $[-\kp,0]$ and at
  least one of them is $-\kp$. Necessarily $p\geq 1$. Since there are
  $p+1$ choices of $\ell$, at least two vectors, say $\vc_{\ell_1}$
  and $\vc_{\ell_2}$, have $-\kp$ in the same entry. By permuting the
  $p$ entries marked with $*$, assume both $\vc_{\ell_1}$ and
  $\vc_{\ell_2}$ have $-\kp$ in the first entry marked with $*$.  Then
  \[ (1,\underbrace{-\kp,0,\ldots,0}_p,\underbrace{1,1,\ldots,1}_{n-p-1}) \in (\vc_{\ell_1}+\cB)\cap (\vc_{\ell_2}+\cB), \]
  as $p-1+q+1=\tau-2 \leq t$.  It follows that
  $\vc_{\ell_1}=\vc_{\ell_2}$.  W.l.o.g., assume $\ell_1 <
  \ell_2$. Then the $(n-p-2q-\ell_2-1)$-th entry, from the right side,
  of $\vc_{\ell_2}$ is $0$, while the corresponding entry of
  $\vc_{\ell_1}$ is $1$, a contradiction. \qed\end{proof}

  Continuing our specialization, we turn to tackle the case of $t=2$,
  and present a strong restriction on the dimension $n$.
  
\begin{theorem}
  For any $\kp\geq\km\geq 0$, if $\cB(n,2,\kp,\km)$ lattice-tiles
  $\Z^n$ and also $\abs*{\cB(n,2,\kp,\km)}$ is even, then
  \[ n=\frac{4\ell^2-(\kp+\km-3)^2+8}{4(\kp+\km)},\]
  for some $\ell\in\Z$.
\end{theorem}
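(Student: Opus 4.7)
The plan is to reduce to splittings via Theorem~\ref{th:tiletolat} and then exploit a character of order two on the resulting group. Suppose $\cB(n,2,\kp,\km)$ lattice-tiles $\Z^n$. By Theorem~\ref{th:tiletolat} we obtain a finite Abelian group $G$ of order $\abs{\cB(n,2,\kp,\km)}$ together with $S=\set{s_1,\dots,s_n}\subseteq G$ satisfying $G=M\splt_2 S$ for $M=[-\km,\kp]^*$. Since $\abs{G}$ is even, the structure theorem for finite Abelian groups yields a non-trivial character $\chi:G\to\set{\pm 1}$. The orthogonality relation gives $\sum_{g\in G}\chi(g)=0$, while the splitting condition says every $g\in G$ has a unique representation $g=\vb\cdot(s_1,\dots,s_n)$ with $\vb\in\cB(n,2,\kp,\km)$. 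Writing $\epsilon_i\eqdef\chi(s_i)\in\set{\pm 1}$ and stratifying by $\wt(\vb)\in\set{0,1,2}$ gives
\[ 0=1+\sum_{i=1}^n T_i+\sum_{1\le i<j\le n} T_iT_j,\qquad T_i\eqdef\sum_{b\in[-\km,\kp]^*}\epsilon_i^{b}. \]

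Next I would pin down the values of $T_i$ via a parity argument. Writing $s\eqdef\kp+\km$, one has $\abs{\cB(n,2,\kp,\km)}=1+ns+\binom{n}{2}s^2$, which is odd whenever $s$ is even; the hypothesis therefore forces $s$ to be odd. For $\epsilon_i=+1$ clearly $T_i=s$, while for $\epsilon_i=-1$ a direct alternating-sum computation (using that $\kp+\km+1$ is even) gives $T_i=-1$. Let $r$ denote the number of indices with $\epsilon_i=-1$, and set $u=n-r$; nontriviality of $\chi$ forces $r\ge 1$, for otherwise $\chi$ would be identically $1$ on $S$ and therefore trivial on $G$.

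Substituting into the character identity and applying $\sum_{i<j}T_iT_j=\tfrac12\bigl((\sum_i T_i)^2-\sum_i T_i^2\bigr)$, the displayed equation simplifies to
\[\bigl(u(s+1)-(n-1)\bigr)^2=u(s^2-1)+(n-1).\]
Setting $Y\eqdef u(s+1)\in\Z$ reduces this to the integer quadratic
\[Y^2-(2n+s-3)\,Y+(n-1)(n-2)=0,\]
whose discriminant evaluates to $D=4ns+(s-3)^2-8$. Since $Y$ is an integer, $D$ must be a perfect square; because $s$ is odd, each of $4ns$, $(s-3)^2$, and $-8$ is divisible by $4$, so $D=(2\ell)^2$ for some $\ell\in\Z$. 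Rearranging $4\ell^2=4ns+(s-3)^2-8$ then yields $n=(4\ell^2-(\kp+\km-3)^2+8)/(4(\kp+\km))$, as required.

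The main obstacle I anticipate is the parity bookkeeping that guarantees the clean dichotomy $T_i\in\set{s,-1}$; without the hypothesis that $\abs{\cB(n,2,\kp,\km)}$ is even (which forces $s$ odd), the alternative parities of $\kp$ and $\km$ would yield $T_i\in\set{0,-2}$ and the discriminant would not collapse to the stated closed form. Once this parity reduction is in place, the remaining derivation is routine algebra.
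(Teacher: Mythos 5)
Your proof is correct and is essentially the paper's argument recast in character-sum form: the identity $0 = 1 + \sum_i T_i + \sum_{i<j} T_i T_j$ obtained from the sign character $\chi$ and orthogonality is exactly the paper's count of the odd elements of $G = \Z_{2^r}\times G'$ produced by the $2$-splitting (your $r$ is the paper's $n_1$, and your dichotomy $T_i\in\set{s;-1}$ encodes its $m_0,m_1$ with $m_1-m_0=1$ forced by $\kp+\km$ odd). The resulting quadratic and its discriminant $D=4n(\kp+\km)+(\kp+\km-3)^2-8$ coincide with the paper's (whose radicand is $D/4=\ell^2$), so the two derivations conclude identically.
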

\begin{proof}
  By Theorem~\ref{th:tiletolat} there exists an Abelian group $G$
  whose size is $\abs*{G}=\abs*{\cB(n,2,\kp,\km)}$ such that
  $G=M\splt_2 S$ for some $S\subseteq G$, $\abs*{S}=n$, where $M\eqdef
  [-\km,\kp]^*$. Since $G$ is Abelian and of even order, necessarily
  $G=\Z_{2^r}\times G'$, for some $r\geq 1$. We may therefore write
  any element $g\in G$ as a pair $(a,b)$ where $a\in\Z_{2^\ell}$ and
  $b\in G'$, and we say $g$ is \emph{even} if $a\equiv 0\pmod{2}$, and
  \emph{odd} otherwise.

  Denote by $n_1$ the number of odd elements in $S$. Additionally,
  denote by $m_0\eqdef \floor{\kp/2}+\floor{\km/2}$ (respectively,
  $m_1\eqdef \ceil{\kp/2}+\ceil{\km/2}$) the number of even
  (respectively, odd) numbers in $M$.

  Let us examine how the
  $\frac{1}{2}\parenv*{\binom{n}{2}(\kp+\km)^2+n(\kp+\km)+1}$ odd
  elements of $G$ are obtained via the $2$-splitting. There are three
  possible ways:
  \begin{enumerate}
  \item
    An odd element in $S$ times an odd number in $M$.
  \item
    An odd element in $S$ times an odd number in $M$,
    plus an even element in $S$ times any number from $M$.
  \item
    An odd element in $S$ times an odd number in $M$, plus a different
    odd element in $S$ times an even number from $M$.
  \end{enumerate}
  Thus,
  \begin{multline*}
    n_1 m_1+n_1 m_1 (n-n_1) (m_0+m_1)+n_1 m_1(n_1-1)m_0\\
    =\frac{1}{2}\parenv*{\binom{n}{2}(m_0+m_1)^2+n(m_0+m_1)+1}.
  \end{multline*}
  Solving for $n_1$ we obtain
  \begin{equation}
    \label{eq:sqrt}
    n_1 = \frac{n(m_0+m_1)-m_0+1\pm\sqrt{n(m_1^2-m_0^2)+m_0^2-2m_0-1}}{2m_1}.
  \end{equation}
  We recall that $m_0+m_1=\kp+\km$. Additionally, we note that
  \[\abs*{\cB(n,2,\kp,\km)}=\binom{n}{2}(\kp+\km)^2+n(\kp+\km)+1\]
  is even, which implies that $\kp+\km$ is odd, and then
  $m_1-m_0=1$. It follows that
  $m_1^2-m_0^2=(m_1-m_0)(m_1+m_0)=m_1+m_0=\kp+\km$. Substituting back
  in~\eqref{eq:sqrt}, we use the fact that the square root must be an
  integer $\ell\in\Z$ to obtain the desired claim after some simple
  rearranging.  \qed\end{proof}

  Finally, we focus on the smallest case not studied before -- tiling
  $\cB(n,2,1,0)$. In this case, by a careful study of the possible
  group splittings we obtain a full classification of possible
  tilings. We require some structural lemmas first. These hold for a
  weaker structure than a $t$-splitting: If in
  Definition~\ref{def:split} only the first condition holds, we denote
  it as $G\geq M\splt_t S$.

  \begin{lemma}\label{lm:t=2-difference1}
    Suppose that $G\geq \set{1} \splt_2 S$. Let $n=\abs{S}$. Consider
    the $(n+1)n$ differences $s-s'$, where $s,s' \in S \cup \set{0}$
    and $s\neq s'$. If there are two differences which are equal, then
    they must have the form
    \[s_i-s_j=s_k-s_i,\]
    for some $s_i,s_j$ and $s_k \in S\cup \set{0}$.  Furthermore, if
    $s_i=0$, then we must have $s_j=s_k$.
  \end{lemma}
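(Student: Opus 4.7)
The plan is to write the two equal differences as $a - b = c - d$, equivalently $a + d = b + c$, with $a, b, c, d \in S \cup \set{0}$, $a \neq b$, $c \neq d$, and $(a, b) \neq (c, d)$. I would first show that either $a = d$ or $b = c$, which by symmetry lets me fix the common middle term $s_i$; then exhibit the desired triple $(s_i, s_j, s_k)$; and finally derive the ``furthermore'' clause from the splitting property.

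To show $a = d$ or $b = c$, I would assume the contrary and aim for a contradiction. The only remaining equalities among the four entries would then be $a = c$ or $b = d$, each of which forces $(a, b) = (c, d)$ via $a + d = b + c$, contradicting the hypothesis; hence $a, b, c, d$ are pairwise distinct, and at most one of them equals $0$. If all four lie in $S$, then $\set{a, d}$ and $\set{b, c}$ are distinct $2$-subsets of $S$ with equal sum, contradicting $G \geq \set{1} \splt_2 S$. Otherwise exactly one of them is $0$; by the symmetry of the configuration we may assume $a = 0$, and then $b + c = d$ is a weight-$2$ sum of two distinct elements of $S$ equal to a weight-$1$ element of $S$, again contradicting the splitting.

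Having reduced to $a = d$ or $b = c$, I would put $(s_i, s_j, s_k) = (a, b, c)$ in the first case and check that $s_i - s_j = a - b$ and $s_k - s_i = c - a = a - b$, using $a + d = b + c$ together with $a = d$; the case $b = c$ is symmetric with $(s_i, s_j, s_k) = (b, d, a)$. For the ``furthermore'' clause, if $s_i = 0$ then the common difference value equals both $-s_j$ and $s_k$, forcing $s_j + s_k = 0$. Since the two differences are nonzero, $s_j, s_k \in S$; if they were distinct, then $s_j + s_k = 0$ would be a weight-$2$ sum equalling zero, contradicting $G \geq \set{1} \splt_2 S$. Therefore $s_j = s_k$.

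The main obstacle will be keeping the case analysis organized. The splitting hypothesis furnishes exactly the forbidden equalities I need (two distinct weight-$2$ sums being equal, a weight-$2$ sum equalling a weight-$1$ element, a weight-$2$ sum equalling $0$, and elements of $S$ coinciding or vanishing), while the constraint $(a, b) \neq (c, d)$ is used precisely to eliminate the otherwise symmetric configurations $a = c$ and $b = d$. Each individual sub-case is essentially a one-line application of the relevant splitting property, but some care is needed to ensure that every configuration of zeros and coincidences is accounted for.
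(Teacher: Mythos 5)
Your proof is correct and follows essentially the same route as the paper's: rewrite the equal differences as $s_i+s_\ell=s_k+s_j$ and use the distinctness of weight-$\le 2$ sums guaranteed by $G\geq\set{1}\splt_2 S$ to force a coincidence between the two sides, then handle the $s_i=0$ case by noting $s_j+s_k=0$ is forbidden for $s_j\neq s_k$. The paper compresses your case analysis (pairwise distinctness, location of the zero element) into one line, but the underlying argument is the same.
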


  \begin{proof}
    Assume that there are two distinct pairs $(s_i,s_j), (s_k,s_\ell)
    \in (S\cup\set{0})^2$ with $s_i\neq s_j$ and $s_k\neq s_\ell$ such
    that
    \[s_i-s_j=s_k-s_\ell.\]
    Rearranging the terms, we have 
    \[s_i+s_\ell=s_k+s_j.\]
    Since $G\geq \set{1} \splt_2 S$, $(s_i,s_j)\neq (s_k,s_\ell)$ and
    $\set{s_i,s_\ell}\neq \set{s_k,s_j}$, either $s_i=s_\ell$ or
    $s_k=s_j$. Then the conclusion follows.
  \qed\end{proof}

  \begin{lemma}\label{lm:t=2-difference2}
    Suppose that $G\geq \set{1} \splt_2 S$. For each $s_i \in S$,
    there is at most one unordered  pair $\set{s_j,s_k} \subset S\cup\set{0}$
    with $s_j\neq s_k$ such that
    \[s_i-s_j=s_k-s_i.\]
  \end{lemma}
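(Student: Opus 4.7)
The plan is to argue by contradiction. Suppose $s_i \in S$ admits two distinct unordered pairs $\{s_j,s_k\}$ and $\{s_{j'},s_{k'}\}$ in $S\cup\set{0}$ with $s_j\neq s_k$ and $s_{j'}\neq s_{k'}$, both satisfying the midpoint relation
\[ s_i-s_j = s_k-s_i \quad\text{and}\quad s_i-s_{j'}=s_{k'}-s_i.\]
Rearranging, both sums equal $2s_i$, so the key identity is
\[ s_j+s_k = s_{j'}+s_{k'}.\]
I would then split into three cases according to how many of the four elements $s_j,s_k,s_{j'},s_{k'}$ are equal to $0$, recalling that $G\geq \set{1}\splt_2 S$ forces every element of $S$ to be nonzero and the elements of $S$ to be pairwise distinct (since weight-$1$ combinations must be distinct and nonzero).

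In the first case, none of the four equals $0$. Then $s_j+s_k$ and $s_{j'}+s_{k'}$ are two weight-$2$ linear combinations over $(\set{1}\cup\set{0})^n$ corresponding to distinct index pairs, so the splitting property directly forbids them to be equal. In the second case, exactly one pair contains $0$, say $s_j=0$ with $s_{j'},s_{k'}\in S$. Then $s_k=s_{j'}+s_{k'}$, equating a weight-$1$ combination with a weight-$2$ combination, again contradicting $G\geq \set{1}\splt_2 S$. In the third case, both pairs contain $0$; WLOG $s_j=s_{j'}=0$, which forces $s_k=s_{k'}$ and hence $\set{s_j,s_k}=\set{s_{j'},s_{k'}}$, contradicting the assumption that the two pairs are distinct.

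The argument is essentially bookkeeping; no real obstacle arises, but I would make sure to treat the occurrences of $0$ carefully, since the pairs live in $S\cup\set{0}$ rather than in $S$, and to note that the weight-$1$ versus weight-$2$ distinction in the splitting is precisely what rules out the mixed second case. One could alternatively deduce the lemma from Lemma~\ref{lm:t=2-difference1} applied to the two pairs of equal differences $(s_i-s_j,s_k-s_i)$ and $(s_i-s_{j'},s_{k'}-s_i)$, but the direct case analysis above is cleaner and self-contained.
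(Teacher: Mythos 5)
Your proposal is correct and follows essentially the same route as the paper: both reduce the two midpoint relations to $s_j+s_k=s_{j'}+s_{k'}$ and then invoke the distinctness of the weight-$1$ and weight-$2$ combinations in the splitting to force $\set{s_j;s_k}=\set{s_{j'};s_{k'}}$. Your explicit case analysis on which entries equal $0$ merely spells out the bookkeeping that the paper's one-line conclusion leaves implicit.
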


  \begin{proof}
    Suppose that there is another pair $\set{s_j',s_k'}$ with
    $s_j'\neq s_k'$ such that $s_i-s_j'=s_k'-s_i$. Then
    \[2s_i=s_j+s_k=s_j'+s_k'.\]
    Since $s_j\neq s_k$, $s_j'\neq s_k'$ and $G\geq \set{1} \splt_2
    S$, necessarily $\set{s_j,s_k}=\set{s_j',s_k'}$.
  \qed\end{proof}

  For an Abelian group $G$, let $m_2(G)$ be the number of elements of
  order $2$ in $G$, i.e.,
  \[m_2(G)\eqdef  \abs*{\set*{x\in G ; x\neq 0, 2x=0}}.\]

  \begin{lemma}\label{lem:order2}
    Suppose that $G\geq \set{1} \splt_2 S$, and let
    $n\eqdef\abs{S}$. Then we have
    \[ \abs{G}+m_2(G)\geq n^2 -n+1.\]
  \end{lemma}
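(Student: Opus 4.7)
The plan is to count the $(n+1)n$ nonzero ordered differences $s-s'$ with $s, s' \in X \eqdef S \cup \set*{0}$, $s \neq s'$, and to bound the number of coincidences among them. Writing $m(v)$ for the multiplicity of $v$ in this list, the number of distinct nonzero values produced is $(n+1)n - C$, where $C \eqdef \sum_{v}(m(v) - 1)$ is the total excess. Since these distinct values all live in $G \setminus \set*{0}$, it suffices to prove $C \leq 2n + m_2(G)$, which yields $\abs{G} - 1 \geq (n+1)n - C \geq n^2 - n - m_2(G)$, i.e., the desired bound.

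I would split the excess according to the order of $v$. For $v$ with $2v = 0$, I claim $m(v) \leq 2$: if two distinct unordered pairs $\set*{x, x+v}, \set*{y, y+v} \subseteq X$ both had difference $v$, the ordered pairs $(x, x+v)$ and $(y, y+v)$ would collide, and by Lemma~\ref{lm:t=2-difference1} the collision must take midpoint form, which forces $y \in \set*{x, x+v}$, a contradiction. So each order-$2$ element contributes at most $1$ to $C$, totalling at most $m_2(G)$ from this case.

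For $v$ with $2v \neq 0$, Lemma~\ref{lm:t=2-difference1} forces each collision of ordered pairs with common difference $v$ to come from a shared arithmetic progression in $X$ with common difference $v$. A progression of length $5$ would have its central element serving as the midpoint of two distinct pairs in $X$, violating Lemma~\ref{lm:t=2-difference2}; hence such APs have length at most $4$ and $m(v) \leq 3$. A direct accounting then shows that a maximal AP of length $L \in \set*{3, 4}$ in direction $v$ contributes exactly $2(L-2)$ to $C$ (once to $m(v)$, once to $m(-v)$) and exactly $L-2$ midpoint triples $(m, \set*{a,b})$ in $X$ with $2m = a + b$. Hence the non-order-$2$ part of $C$ equals $2T$, where $T$ is the total number of such midpoint triples.

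It remains to show $T \leq n$. By Lemma~\ref{lm:t=2-difference2} each $m \in X$ is the midpoint of at most one pair, giving $T \leq \abs*{X} = n + 1$. But $m = 0$ can never be a midpoint: if $2 \cdot 0 = a + b$ with $a \neq b$ in $X$, then neither $a$ nor $b$ is zero (else $a = b$), so $a, b \in S$ with $a \neq b$; then $a + b$ is a weight-$2$ sum from the $2$-splitting, which is forced to be nonzero, contradicting $a + b = 0$. Thus $T \leq n$, and $C \leq 2n + m_2(G)$ as needed. The delicate point will be the bookkeeping in the non-order-$2$ case, where one must verify that the excess decomposes cleanly as twice the number of midpoint triples, with no overcounting across different direction pairs $\set*{v, -v}$.
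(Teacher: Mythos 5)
Your proof follows essentially the same strategy as the paper's: count the $(n+1)n$ ordered differences over $S\cup\set{0}$, use Lemma~\ref{lm:t=2-difference1} to force every coincidence into midpoint form, use Lemma~\ref{lm:t=2-difference2} (plus the fact that $0$ is never a midpoint) to cap the number of midpoint configurations at $n$, and give order-$2$ differences a doubled allowance. The paper organizes the same count differently: it deletes at most $2n$ ordered pairs up front (two for each $s_i\in S$ that serves as a midpoint) and observes that every surviving coincidence has the form $s_i-s_j=s_j-s_i$, so each order-$2$ element absorbs at most one extra representation.

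The one real issue is exactly the ``delicate point'' you flagged. The structure of the pairs with common difference $v$ need not be a genuine arithmetic progression (a path); it can wrap around into a cycle when $3v=0$ or $4v=0$. A $3$-cycle is not excluded by either lemma: for instance $S=\set{1,4,7}\subseteq\Z_9$ satisfies $\Z_9\geq\set{1}\splt_2 S$, and in direction $v=3$ one has $m(v)=3$ with \emph{all three} elements serving as midpoints, so the cycle contributes $4$ to $C$ against $3$ midpoint triples --- your claimed identity ``exactly $2(L-2)$ to $C$ and exactly $L-2$ triples'' fails here. A $4$-cycle ($4v=0$, $2v\neq 0$) would even give $m(v)=4$, and your length-$5$ argument does not rule it out because the would-be second pair $\set{a,a+4v}$ degenerates; it \emph{is} impossible, but only because $a+(a+v)=(a+2v)+(a+3v)$ produces two equal weight-$\leq 2$ sums, violating the splitting condition directly. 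Fortunately, in every cyclic case the contribution to $C$ is still at most twice the number of midpoint triples consumed ($4\leq 6$ for the $3$-cycle, $6\leq 8$ for the $4$-cycle), so the inequality $C\leq 2T+m_2(G)$ --- which is all you need --- survives once ``equals $2T$'' is weakened to ``at most $2T$'' and the cyclic cases are checked explicitly. The paper's deletion bookkeeping sidesteps this entirely, since in a cycle every internal ordered pair gets deleted.
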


\begin{proof}
  Denote
  \[\Delta \eqdef \set{(s,s') ; s,s'\in S\cup\set{0} \text{ and } s\neq s' }.\]
  According to Lemma~\ref{lm:t=2-difference2}, for each $s_i \in S$,
  there is at most one unordered pair $\set{s_j,s_k} \subset S\cup \set{0}$ with
  $s_j\neq s_k$ such that $s_i-s_j=s_k-s_i$ (and so
  $s_i-s_k=s_j-s_i$). If such a pair exists,  we remove $(s_k,s_i)$ and
  $(s_j,s_i)$ from $\Delta$. Denote the remaining set as $\Delta'$. Then
  $\abs{\Delta'} \geq (n+1)n -2n$.

  According to Lemma~\ref{lm:t=2-difference1} and the definition of
  $\Delta'$, if there are two pairs in $\Delta'$ whose differences are
  equal, they must have the form $s_i-s_j=s_j-s_i$, and so,
  $2(s_i-s_j)=0$. Hence, for every $g \in G$ of order $2$, there are at most two pairs $(s,s') \in \Delta'$ with $s-s'=g$ and, for every other non-zero element of $G$, there is at most one such representation.  It follows that
  \[\abs{G}-1+m_2(G)\geq  (n+1)n -2n.\]
  Rearranging the terms, we complete the proof.
\qed\end{proof}

\begin{theorem}
  \label{th:no210} Let $n \geq 3$. Then 
  $\cB(n,2,1,0)$ lattice-tiles $\Z^n$ only when $n\in\{3, 5\}$, and
  only by $2$-splitting $\Z_7$ and $2$-splitting  $\F_2^4$,
  respectively.
\end{theorem}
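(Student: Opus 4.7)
The plan is to translate the tiling problem into the language of $2$-splittings via Theorem~\ref{th:tiletolat}: a lattice tiling of $\Z^n$ by $\cB(n,2,1,0)$ is equivalent to a $2$-splitting $G = \{1\} \splt_2 S$ with $|S| = n$ and $|G| = 1 + n + \binom{n}{2} = \tfrac{n^2+n+2}{2}$. Applying Lemma~\ref{lem:order2} then gives $m_2(G) \geq n^2 - n + 1 - |G| = \tfrac{n(n-3)}{2}$. Since $G$ is Abelian, its elements of order dividing $2$ form a subgroup of size $m_2(G)+1$, which must therefore be a power of $2$ dividing $|G|$.

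For $n \geq 7$ these two constraints combine to $|G|/(m_2(G)+1) \leq 1 + 4n/((n-1)(n-2)) < 2$, forcing $|G|$ to equal $m_2(G)+1$ and hence to be a power of $2$. Writing $n^2 + n + 2 = 2^{k+1}$ and multiplying by $4$ produces the Ramanujan--Nagell equation $(2n+1)^2 + 7 = 2^{k+3}$, whose classical complete solution set restricts $n$ to $\{0, 1, 2, 5, 90\}$; only $n = 90$ survives the constraint $n \geq 7$. For the sporadic values $n = 4$ and $n = 6$, the required bound on $m_2(G)$ (namely $\geq 2$ and $\geq 9$) cannot be met because $v_2(|G|)$ equals only $0$ and $1$, respectively.

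To rule out $n = 90$, the bound forces $m_2(G)+1 = 2^{12}$ and hence $G = \F_2^{12}$. I would then view $S$ as the columns of a $12 \times 90$ binary matrix $H$; the splitting condition translates precisely into the statement that the code $C = \ker H \subseteq \F_2^{90}$ contains no non-zero word of Hamming weight at most $4$, since any such word would yield a forbidden coincidence among the weight-$\leq 2$ sums of the $s_i$'s. Thus $C$ would be a $[90,78,\geq 5]$ binary linear code saturating the sphere-packing bound $2^{78}(1 + 90 + \binom{90}{2}) = 2^{90}$, i.e., a perfect binary $2$-error-correcting code of length $90$. The Tiet\"av\"ainen--Van Lint classification of perfect codes over finite fields excludes this possibility.

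Finally, when $n = 3$ the only Abelian group of order $7$ is $\Z_7$, and the classical splitting $\Z_7 = \{1\} \splt_2 \{1, 2, 4\}$ delivers the tiling. When $n = 5$, $|G| = 16$ and $m_2(G) \geq 5$ restrict $G$ to $\Z_4 \times \Z_2^2$ or $\F_2^4$; the latter yields the explicit tiling of Example~\ref{ex:ex2}. The main remaining obstacle is the uniqueness step for $n = 5$, namely ruling out $G = \Z_4 \times \Z_2^2$: I would approach this by partitioning $S$ according to its image in the quotient by the order-$8$ subgroup of $2$-torsion, enumerating the admissible numbers of $2$-torsion and non-$2$-torsion elements of $S$ from parity-of-sum considerations, and then invoking Lemmas~\ref{lm:t=2-difference1}--\ref{lm:t=2-difference2} on each resulting configuration to force a coincidence of two distinct weight-$\leq 2$ sums.
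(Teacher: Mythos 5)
Your overall strategy coincides with the paper's: reduce to a $2$-splitting via Theorem~\ref{th:tiletolat}, apply Lemma~\ref{lem:order2} to get $m_2(G)\geq \frac{1}{2}n(n-3)$, play this off against the fact that the $2$-torsion subgroup has order $m_2(G)+1$ dividing $\abs{G}$ to force $G=\F_2^r$ for $n\geq 7$, and then invoke the nonexistence of perfect binary $2$-error-correcting codes. Your Ramanujan--Nagell detour for $n\geq 7$ is a pleasant way of making explicit which lengths are even candidates (only $n=90$ survives), but it is ultimately the same fact the paper uses: the sphere-packing condition $1+n+\binom{n}{2}=2^m$ \emph{is} that equation, and the paper simply cites the classification of perfect codes (via Theorem~\ref{th:revperfectcode}) to dispose of all $n\geq 7$ at once. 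Your handling of $n=3,4,6$ and the reduction of $n=5$ to the two groups $\F_2^4$ and $\Z_4\times\F_2^2$ all match the paper and are correct.

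The one genuine gap is the exclusion of $G=\Z_4\times\F_2^2$ when $n=5$, which you yourself flag as ``the main remaining obstacle'' and for which you offer only a plan, not a proof. This is not a formality: the paper itself does not dispose of this group by a clean structural argument but by a computer search, which suggests that the parity/quotient bookkeeping you propose will not immediately force a contradiction. Indeed, a quick count shows your quotient argument alone is consistent: writing $H$ for the order-$8$ subgroup of $2$-torsion and $a$ for the number of elements of $S$ outside $H$, the requirement that the $15$ sums hit the $8$ elements of $G\setminus H$ exactly once gives $a(6-a)=8$, which admits the solutions $a=2$ and $a=4$; so the configuration survives the first-order parity considerations and a finer (essentially exhaustive) analysis of the candidate splitter sets is still needed. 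Until that case analysis is actually carried out (or replaced by a verified computation), the uniqueness claim for $n=5$ is not established. Everything else in your argument is sound.
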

\begin{proof}
  By Lemma~\ref{lem:order2}, if we are to have a splitting
  $G=\set{1}\splt_2 S$, then
  \[ \binom{n}{2}+n+1+{m_2(G)}\geq n^2-n+1,\]
  where we used the fact that $G=\set{1}\splt_2 S$ implies
  $\abs{G}=\abs{\cB(n,2,1,0)}$. Rearranging we get,
  \begin{equation}
    \label{eq:mgeq}
    m_2(G)\geq \frac{1}{2}n(n-3).
  \end{equation}

  We now turn to look at $G$. Since it is Abelian, we may write
  \[ G = \Z_{n_1}\times \Z_{n_2} \times \dots \times \Z_{n_\ell},\]
  with $n_1,\dots,n_\ell\geq 2$. We observe that
  \[ m_2(\Z_{n_i})=\begin{cases}
  0 & \text{$n_i$ is odd,} \\
  1 & \text{otherwise.}
  \end{cases}\]
  Thus,
  \[ m_2(G) = \parenv*{\prod_{i=1}^{\ell} (m_2(\Z_{n_i})+1)}-1.\]
  If $G\neq \F_2^r$, then necessarily
  \begin{equation}
    \label{eq:mleq}
    m_2(G)\leq \frac{1}{2}\abs{G}-1= \frac{1}{4}(n^2+n-2),
  \end{equation}
  which is attained by setting exactly one of the $n_i$ to be $4$, and
  the rest to be $2$.
  
  If we compare \eqref{eq:mgeq} and \eqref{eq:mleq}, then for $n\geq
  7$ the lower bound of \eqref{eq:mgeq} is greater than the upper
  bound of \eqref{eq:mleq}, hence, only $G=\F_2^r$ is still
  possible. For $n\leq 6$ we deal with the cases separately:
  \begin{itemize}
  \item
    For $n=3$, $\abs{G}=7$, hence $G=\Z_7$.   A splitting set $S=\{1,2,4\}$ can be found in  \cite[Theorem 6]{BuzEtz12}.  
   \item
    For $n=4$, $\abs{G}=11$, hence $G=\F_{11}$, but $m_2(G)=0$,
    contradicting \eqref{eq:mgeq}.
  \item
    For $n=5$, $\abs{G}=16$, with the following options:
    \begin{itemize}
    \item
      $G=\F_2^4$, for which $m_2(G)=15$, and a $2$-splitting exists by
      Theorem~\ref{th:perfectcode} (see Example~\ref{ex:ex2}).
    \item
      $G=\Z_4\times\F_2^2$, for which $m_2(G)=7$, but a computer
      search rules out such a splitting.
    \item
      $G=\Z_4^2$, for which $m_2(G)=3$, contradicting \eqref{eq:mgeq}.
    \item
      $G=\Z_8\times\F_2$, for which $m_2(G)=3$, contradicting \eqref{eq:mgeq}.
    \item
      $G=\Z_{16}$, for which $m_2(G)=1$, contradicting \eqref{eq:mgeq}.
    \end{itemize}
  \item
    For $n=6$, $\abs{G}=22$, hence $G=\F_2\times\F_{11}$, but
    $m_2(G)=1$, contradicting \eqref{eq:mgeq}.
  \end{itemize}
  Finally, if $n\geq 7$, only $G=\F_2^r$ remains an option, but by
  Theorem~\ref{th:revperfectcode} we must then have a perfect
  $[n,k,5]$ linear code over $\F_2$, and such codes do not exist
  (e.g., see~\cite{MacSlo78}).  \qed\end{proof}

  Using a similar method, we now direct our attention to the case of
  $\cB(n,2,2,0)$.  Let $G$ be an Abelian group and assume that $G \geq
  \{1,2\} \splt_2 S$, for some $S=\{s_1,s_2,\ldots, s_n\} \subseteq
  G$. Denote $s_{n+i}\eqdef 2s_i$ for $1\leq i\leq n$ and $s_\infty \eqdef 0$. 
  Consider the congruence modulo $n$. We assume that $\infty \equiv \infty \pmod{n}$, and $\infty \not \equiv i \pmod{n}$ and $i \not \equiv \infty \pmod{n}$ for all $i \in [1,2n]$.
  Let
  \[\Delta \eqdef \set*{(s_i,s_j) ;  i,j \in [1,2n]\cup \set{\infty}, i\not\equiv j \ppmod{n}}.\]
    Then $\abs{\Delta}= (2n+1)2n-2n=4n^2$. We are to estimate
  the number of the equations
  \[s_i-s_j=s_k-s_\ell,\]
  where $(s_i,s_j),(s_k,s_\ell) \in \Delta$ and $(i,j)\neq (k,\ell)$. Note
  that the equation implies
  \[s_i+s_\ell=s_k+s_j.\]
  Since $G \geq \set{1,2} \splt_2 S$, either $i\equiv \ell \pmod{n}$ or
  $k \equiv j \pmod{n}$.  By exchanging the two sides of the
  equations, we assume that $i\equiv \ell \pmod{n}$ always holds.

  \begin{lemma}\label{lm:t=2-kp=2-difference1}
    In the setting above, the number of the equations
    \[s_i-s_j=s_k-s_\ell,\]
    where $(s_i,s_j),(s_k,s_\ell) \in \Delta$, $i\equiv \ell \pmod{n}$ and
    $k\not \equiv j\pmod{n}$, is at most $8n$.
  \end{lemma}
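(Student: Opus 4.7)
The plan is to rearrange the equation as $s_i + s_\ell = s_j + s_k$ and then parameterize by the left-hand side $(i,\ell)$. I would first count the pairs $(i,\ell) \in ([1,2n]\cup\set{\infty})^2$ with $i \equiv \ell \pmod n$: for each $i \in [1,n]$ we have $\ell \in \set{i, i+n}$, symmetrically for $i \in [n+1,2n]$, and finally $(\infty,\infty)$ is its own pair. This yields exactly $2n + 2n + 1 = 4n+1$ ordered pairs $(i,\ell)$.

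Next, for each fixed $(i,\ell)$, I would bound the number of ordered $(j,k)$ with $k \not\equiv j \pmod n$ and $s_j + s_k = s_i + s_\ell$. The essential step is a natural correspondence between such ordered pairs and non-zero vectors $\ve\in\set{0,1,2}^n$ of weight at most $2$: an index $r\in[1,n]$ encodes coefficient $1$ at position $r$, an index $r\in[n+1,2n]$ encodes coefficient $2$ at position $r-n$, and $r=\infty$ encodes no contribution. Because $j\not\equiv k \pmod n$, the two contributions occupy distinct positions (or only one is active), so $s_j + s_k = \ve\cdot(s_1,\ldots,s_n)$, and the unordered pair $\set{j,k}$ can be recovered from $\ve$.

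I would then invoke the splitting hypothesis $G \geq \set{1,2}\splt_2 S$. Write $g \eqdef s_i + s_\ell$. If $g \neq 0$, the splitting guarantees at most one non-zero $\ve$ with $\wt(\ve)\leq 2$ realizing $g$, so the unordered pair $\set{j,k}$ is uniquely determined and gives at most $2$ ordered pairs $(j,k)$. If $g = 0$ -- which holds at least for $(i,\ell)=(\infty,\infty)$, where $s_i + s_\ell = 0+0$ directly -- then by the splitting no non-trivial $\ve$ can realize $g$, so no valid $(j,k)$ exists for this pair.

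Combining, $(\infty,\infty)$ contributes $0$ to the count while each of the remaining $4n$ pairs contributes at most $2$, yielding the claimed upper bound of $8n$. The main technical obstacle is establishing the index-to-vector correspondence cleanly while keeping the weight-$1$ case (one of $j,k$ equals $\infty$) and the weight-$2$ case (both in $[1,2n]$) aligned with the corresponding representations of $g$; the auxiliary constraints $j \not\equiv i$ and $k \not\equiv \ell$ inherited from $(s_i,s_j),(s_k,s_\ell) \in \Delta$ can only further reduce the count and so do not affect the bound.
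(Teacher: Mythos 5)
Your proof is correct and follows essentially the same route as the paper's: parameterize by the $4n+1$ admissible pairs $(i,\ell)$, rewrite the equation as $s_i+s_\ell=s_j+s_k$, and use the splitting condition to conclude that each fixed left-hand side admits at most one unordered pair $\set{j,k}$ (hence two ordered ones), with the value $0$ admitting none. The only difference is cosmetic: where the paper runs through the four configurations $(\bai,\bai)$, $(\bai,\bai+n)$, $(\bai+n,\bai)$, $(\bai+n,\bai+n)$ separately, you unify them via the correspondence between indices in $[1,2n]\cup\set{\infty}$ and weight-at-most-$2$ vectors over $\set{0,1,2}$, which is a clean way to package the same uniqueness argument.
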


  \begin{proof}
    If $i=\ell=\infty$, then $s_j+s_k=0$. Since $G \geq \set{1,2}
    \splt_2 S$, necessarily $j\equiv k \pmod{n}$, contradicting the
    assumption.
    
    Now, let $\bai$ be the unique integer of $[1,n]$ such that
    $\bai\equiv i \equiv \ell \pmod{n}$.
    \begin{enumerate}
    \item
      If $i=\ell=\bai$, then $2s_{\bai}=s_k+s_j$. Since $G \geq
      \set{1,2} \splt_2 S$, necessarily $(k,j)\in \set*{(\bai+n,\infty),(\infty,\bai+n)}$.
    \item
      If $i=\bai$ and $\ell=n+\bai$, then
      $s_{\bai}-s_j=s_k-2s_{\bai}$. We claim that there is at most one
      pair $\{j,k\}$ with $j\not \equiv k \pmod{n}$ such that the
      equality holds; otherwise, suppose we have another pair
      $\set{j',k'}$ satisfying the conditions, then
      $s_j+s_k=s_{j'}+s_{k'}$, contradicting the fact that $G \geq
      \set{1,2} \splt_2 S$.
    \item
      If $(i,\ell)=(\bai+n,\bai)$ or $(\bai+n,\bai+n)$, we have the
      same claim as that in case 2.
    \end{enumerate}
    According to the argument above, given $\bai \in [1,n]$, if
    $i\equiv \ell\equiv \bai \pmod{n}$, we have at most four pairs
    $\set{j,k}$ such that the equation holds. The conclusion follows
    since each pair can generate two equations.\qed
  \end{proof}

  Let $m_3(G)$ be the number of elements of order $3$ in $G$, i.e.,
  \[m_3(G)\eqdef  \abs*{\set*{x\in G ; x\neq 0, 3x=0}}.\]

  \begin{lemma}\label{lm:t=2-kp=2-difference2}
    In the setting above, further assume that the order of $G$ is odd.
    Then the number of the equations
    \[s_i-s_j=s_k-s_\ell,\]
    where $(s_i,s_j),(s_k,s_\ell) \in \Delta$, $i\equiv \ell \pmod{n}$ and
    $k \equiv j\pmod{n}$, is at most
    \[2m_3(G)+11n+11.\]
  \end{lemma}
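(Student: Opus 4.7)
The plan is to rewrite every equation as $s_i+s_\ell=s_j+s_k$ and to parametrize the count by the common class $\bai\in[1,n]\cup\{\infty\}$ of $i,\ell$ together with the common class $\baj$ of $j,k$; the hypothesis $(s_i,s_j)\in\Delta$ forces $\bai\neq\baj$. For $\bai\in[1,n]$ the sum $s_i+s_\ell$ equals one of $2s_{\bai},3s_{\bai},4s_{\bai}$, realized respectively by $1$, $2$, and $1$ ordered choices of $(i,\ell)$; for $\bai=\infty$ the unique choice $(i,\ell)=(\infty,\infty)$ gives sum $0$. The analogous decomposition applies to $(j,k)$, so each equation is encoded by a pair $(\lambda,\mu)\in\{0,2,3,4\}^2$ (where $0$ records the $\infty$-class) together with a solution of $\lambda s_{\bai}=\mu s_{\baj}$, and the task reduces to counting such solutions.

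A case analysis on $(\lambda,\mu)$ disposes of the ``easy'' sub-cases. When $\lambda,\mu\in\{2,4\}$, invertibility of $2$ and $4$ in $G$ (from $|G|$ odd) collapses the equation to $s_{\bai}=s_{\baj}$ or $s_{\bai}=2s_{\baj}$, both of which contradict $G\ge\{1,2\}\splt_2 S$; these sub-cases are vacuous. When exactly one of $\lambda,\mu$ equals $3$ and the other lies in $\{2,4\}$, the same invertibility pins down $s_{\bai}$ from $s_{\baj}$ (or vice versa) uniquely, giving at most $n$ valid pairs per sub-case and $O(1)$ tuples each; summing these four sub-cases, together with the small constants coming from boundary configurations, produces the linear $11n+11$ piece of the bound. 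Finally, the sub-cases with $\bai=\infty$ or $\baj=\infty$ force $\mu s_{\baj}=0$ (respectively $\lambda s_{\bai}=0$), which by oddness can only happen when $\mu=3$ (resp.\ $\lambda=3$) and the corresponding element of $S$ has order~$3$; their contribution will be bundled with the main case below.

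The essential case is $\lambda=\mu=3$, which is where the $2m_3(G)$ term comes from. The relation $3s_{\bai}=3s_{\baj}$ is equivalent to $s_{\bai}-s_{\baj}$ being a nonzero element of order~$3$, and a splittability argument in the spirit of Lemmas~\ref{lm:t=2-difference1} and~\ref{lm:t=2-difference2} shows that the map $(\bai,\baj)\mapsto s_{\bai}-s_{\baj}$ is injective on ordered pairs with $\bai\neq\baj$: the rearrangement $s_{\bai}+s_{\baj'}=s_{\bai'}+s_{\baj}$ expands to an equality of $\ve\cdot S$ sums whose $2$-splittability forces $(\bai,\baj)=(\bai',\baj')$. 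The same argument shows that the differences realized by the $(3,3)$-pairs and the elements $\pm s_{\bar c}$ realized by the $\infty$ sub-cases (for $s_{\bar c}\in S$ of order~$3$) form disjoint subsets of the nonzero order-$3$ elements of $G$, so together they account for at most $m_3(G)$ distinct group elements. I expect the main obstacle to lie in the final bookkeeping: each ordered $(3,3)$-pair spawns four $4$-tuples $(i,j,k,\ell)$ via the two orderings each of $(i,\ell)$ and $(j,k)$, while each order-$3$ element of $S$ spawns a constant number of $\infty$-type tuples, so one must carefully distribute the multiplicities and invoke the disjointness so that the combined count is squeezed down to $2m_3(G)$ rather than the naive $4m_3(G)$, absorbing any remaining slack into the $11n+11$ allowance.
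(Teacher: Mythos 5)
Your decomposition by $(\lambda,\mu)=(a+d,b+c)$ is essentially the paper's decomposition by the coefficient signature $(a,b,c,d)\in\{1,2\}^4$, and your handling of the cases with $\lambda$ or $\mu$ even is sound --- in fact slightly sharper than the paper's, since you eliminate the four signatures with $\lambda,\mu\in\{2,4\}$ outright where the paper merely bounds each of its eleven non-diagonal signatures by $n+1$ using the same invertibility-of-$2$ pigeonhole. The injectivity of $(\bai,\baj)\mapsto s_{\bai}-s_{\baj}$ on ordered pairs, extracted from $2$-splittability, is also correct and is exactly the ingredient the paper uses.

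The gap is the one you flagged yourself, and it is genuine: your count of the $\lambda=\mu=3$ case is $4m_3(G)$, and the missing factor of $2$ cannot be absorbed into the $11n+11$ slack, since $m_3(G)$ can be as large as $\abs{G}-1=2n^2$ (e.g., $G=\F_3^r$), which dwarfs $3n+3$. Moreover, the mechanism you propose for closing it does not work: among the four signatures with $\lambda=\mu=3$, only $(1,1,2,2)$ and $(2,2,1,1)$ produce a common difference $\pm(s_{\bai}-s_{\baj})$ lying in the order-$3$ elements; the signatures $(1,2,1,2)$ and $(2,1,2,1)$ produce differences $s_{\bai}-2s_{\baj}$ and $2s_{\bai}-s_{\baj}$, which in general have no particular order, so an accounting by ``distinct order-$3$ elements realized as differences'' cannot reach them, and disjointness from the $\infty$-type contributions is beside the point. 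The actual resolution --- implicit in the paper, and forced by the way Lemma~\ref{lm:t=2-kp=2-difference1} and Lemma~\ref{lem:order3} do their bookkeeping --- is that an ``equation'' is an \emph{unordered} pair $\bracenv{(s_i,s_j),(s_k,s_\ell)}$ of elements of $\Delta$ with equal differences. Under that convention the four tuples you generate from the ordered pair $(\bai,\baj)$ are precisely the side-swaps of the four generated from $(\baj,\bai)$, so each unordered pair $\{\bai,\baj\}$ with $3(s_{\bai}-s_{\baj})=0$ yields exactly four equations while contributing two elements to $m_3(G)$, giving the required $2m_3(G)$. The paper packages this as two families of at most $m_3(G)$ each: the signatures $\{(2,2,1,1),(1,1,2,2)\}$, whose equations inject into the order-$3$ elements via their common differences, and the signatures $\{(2,1,2,1),(1,2,1,2)\}$, whose equations are in bijection with those of the first family.
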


  \begin{proof}
    Let $\bai,\baj\in[1,n]\cup\set{\infty}$ such that $\bai\equiv
    i\equiv \ell \pmod{n}$ and $\baj\equiv j \equiv k \pmod{n}$.  By the definition of $\Delta$, we have $i\not \equiv j \pmod{n}$, and so, $\bai\neq \baj$.
    The equation $s_i-s_j=s_k-s_\ell$ implies that
    \[as_{\bai}-bs_{\baj}=cs_{\baj}-ds_{\bai},\]
    for some $a,b,c,d\in \set{1,2}.$ We discuss the number of
    equations for each possible value of $(a,b,c,d)$.

    \begin{enumerate}
    \item
      If $a=b=c=d=1$, then $2s_{\bai}=2s_{\baj}$, contradicting $G
      \geq \{1,2\} \splt_2 S$.
    \item
      If $a+d=2$, then there are at most $n+1$ ordered pairs
      $(\bai,\baj)$ such that the equation holds; otherwise, by the
      pigeonhole principle there exist two ordered pairs
      $(\bai,\baj)$, and $(\bai',\baj)$ satisfying the equation, with
      $\bai\neq\bai'$. Then we get that
      $(a+d)s_{\bai}=(b+c)s_{\baj}=(a+d)s_{\bai'},$ i.e., $2s_{\bai}
      =2s_{\bai'}$ for some $\bai\not =\bai'$, a contradiction.
    \item
      If $a+d=4$, then again there are at most $n+1$ ordered pairs
      $(\bai,\baj)$ such that the equation holds; otherwise, we have
      $4s_{\bai} =4s_{\bai'}$ for some $\bai\not =\bai'$,
      contradicting the assumption that $\abs{G}$ is odd.
    \item
      If $b+c=2$ or $4$, we have the same claim as that in cases 2 and 3.
    \item
      If $(a,b,c,d)=(2,2,1,1)$ or $(1,1,2,2)$, then  
      \[2s_{\bai}-2s_{\baj}=s_{\baj}-s_{\bai}\]
      and 
      \[s_{\bai}-s_{\baj}=2s_{\baj}-2s_{\bai}.\]
      Rearranging the terms, we have $3(s_{\baj}-s_{\bai})=0$ and
      $3(s_{\bai}-s_{\baj})=0$. Thus the total number of such two
      kinds of equations is at most $m_3(G)$.
    \item  If $(a,b,c,d)=(2,1,2,1)$ or $(1,2,1,2)$, then 
      \[2s_{\bai}-s_{\baj}=2s_{\baj}-s_{\bai}\]
      and 
      \[s_{\bai}-2s_{\baj}=s_{\baj}-2s_{\bai}.\]
      If the equations above occur, then the equations in case 5 also
      occur. Thus the total number of such two kinds of equations is
      also at most $m_3(G)$.
    \end{enumerate}
    Note that cases 2,3 and 4 include $2^4-4-1=11$ possible values of
    $(a,b,c,d)$. The conclusion follows by summing up all the numbers
    discussed above.  \qed\end{proof}

    \begin{lemma}\label{lem:order3}
      Suppose that $G \geq \set{1,2} \splt_2 S$, and let $n \eqdef
      \abs{S}$. If $\abs{G}$ is odd, then we have that
      \[\abs{G} +2m_3(G) \geq 4n^2 - 19n-10.\]
    \end{lemma}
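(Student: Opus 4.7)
The plan is a double-counting / pigeonhole argument on the differences $s_i - s_j$ for $(s_i, s_j) \in \Delta$, combined with Lemmas~\ref{lm:t=2-kp=2-difference1} and~\ref{lm:t=2-kp=2-difference2} as upper bounds. Recall from the setup that $\abs{\Delta} = 4n^2$, since the index set $[1,2n]\cup\set{\infty}$ has $2n+1$ elements partitioned into $n$ congruence classes of size $2$ plus the class $\set{\infty}$, so $4n+1$ of the $(2n+1)^2$ ordered pairs are excluded. First I verify that every pair in $\Delta$ has nonzero difference: $s_i = s_j$ with $i \not\equiv j \pmod n$ would produce two distinct weight-$1$ or weight-$2$ $\set{1,2}$-combinations of the $s_k$'s representing the same element, contradicting $G\geq \set{1,2}\splt_2 S$.

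For each $g\in G\setminus\set{0}$, set $N_g\eqdef \abs{\set{(s_i,s_j)\in\Delta ; s_i-s_j=g}}$, so that $\sum_g N_g = 4n^2$ and the support of $N_g$ has size at most $\abs{G}-1$. Since $\binom{N_g}{2} \geq N_g - 1$ whenever $N_g\geq 1$, the number of unordered \emph{collisions} $\set{P_1,P_2}$ of distinct pairs in $\Delta$ with equal difference satisfies
\[
\sum_g \binom{N_g}{2} \;\geq\; \sum_{g : N_g\geq 1}(N_g-1) \;\geq\; 4n^2 - \abs{G} + 1.
\]
The setup preceding Lemma~\ref{lm:t=2-kp=2-difference1} already shows that the rearrangement $s_i+s_\ell=s_k+s_j$, together with the splitting hypothesis, forces at least one of the two orderings of the collision to satisfy $i \equiv \ell \pmod n$. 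Consequently the number of ordered ``standardized'' equations $s_i-s_j=s_k-s_\ell$ with $(s_i,s_j),(s_k,s_\ell)\in\Delta$, $(i,j)\neq(k,\ell)$, and $i\equiv \ell \pmod n$ is also at least $4n^2 - \abs{G} + 1$.

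On the other hand, partitioning these standardized equations by whether $k\equiv j \pmod n$, Lemmas~\ref{lm:t=2-kp=2-difference1} and~\ref{lm:t=2-kp=2-difference2} bound the two parts by $8n$ and $2m_3(G) + 11n + 11$ respectively, so their sum is at most $2m_3(G) + 19n + 11$. Combining with the lower bound yields $4n^2 - \abs{G} + 1 \leq 2m_3(G) + 19n + 11$, which rearranges to the claim. The only delicate point is the passage from collisions to standardized equations, relying on the splitting identity; the remainder is routine counting, and the odd-order hypothesis enters only through its prior use in Lemma~\ref{lm:t=2-kp=2-difference2}.
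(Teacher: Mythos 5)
Your proof is correct and follows essentially the same route as the paper: the paper's own proof is a one-line reference that combines Lemmas~\ref{lm:t=2-kp=2-difference1} and~\ref{lm:t=2-kp=2-difference2} with the double-counting scheme of Lemma~\ref{lem:order2}, which is exactly what you have written out (your $\sum_g \binom{N_g}{2} \geq 4n^2 - \abs{G} + 1$ bound and the standardization step are a faithful, and indeed more explicit, rendering of that argument). The constants check out: $8n + (2m_3(G)+11n+11)$ against $4n^2-\abs{G}+1$ yields precisely the stated inequality.
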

    \begin{proof}
      Combining Lemma~\ref{lm:t=2-kp=2-difference1} and
      Lemma~\ref{lm:t=2-kp=2-difference2}, we repeat the same
      arguments as in Lemma~\ref{lem:order2} to obtain the result.
    \qed\end{proof}

    We can now state and prove the result on $\cB(n,2,2,0)$.
    \begin{theorem}
      \label{th:no220}
      Let $n \geq 3$, then $\cB(n,2,2,0)$ lattice-tiles $\Z^n$ only
      when $n\in\{3,11\}$, and only by $2$-splitting $\Z_{19}$ and
      $2$-splitting $\F_3^{5}$, respectively.
    \end{theorem}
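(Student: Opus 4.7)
Let $\cB \eqdef \cB(n,2,2,0)$, and suppose $\cB$ lattice-tiles $\Z^n$. By Theorem~\ref{th:tiletolat}, there is a finite Abelian group $G$ with $\abs{G} = \abs{\cB} = 2n^2 + 1$ (always odd) and a subset $S \subseteq G$ of size $n$ satisfying $G = \{1,2\} \splt_2 S$. Lemma~\ref{lem:order3} then yields
\[ m_3(G) \geq n^2 - \tfrac{19n + 11}{2}. \]

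To complement this, I would bound $m_3(G)$ from above whenever $G \not\cong \F_3^r$. Decompose $G = G_3 \times H$ into its $3$-primary part and its complementary part (of order coprime to $3$). Since $\abs{G}$ is odd, so is $\abs{H}$. The hypothesis $G \not\cong \F_3^r$ forces either $\abs{H} \geq 3$ or $G_3$ to have an invariant factor of order at least $9$; in both situations the $3$-torsion subgroup has index at least $3$ in $G$, yielding $m_3(G) \leq \abs{G}/3 - 1 = (2n^2-2)/3$. Comparing with the lower bound forces $2n^2 - 57n - 29 \leq 0$, i.e., $n \leq 29$.

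In the complementary case $G \cong \F_3^r$, Theorem~\ref{th:revperfectcode} produces a perfect linear $[n,k,5]$ code over $\F_3$. By the classical classification of perfect linear codes in the Hamming metric (see, e.g.,~\cite{MacSlo78}), the only such code is the $[11,6,5]$ ternary Golay code, which forces $n = 11$ and $G = \F_3^5$, recovering exactly the tiling in Example~\ref{ex:ex4}.

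It then remains to handle $3 \leq n \leq 29$ with $G \not\cong \F_3^r$. For each such $n$, I would factor $2n^2+1$ and enumerate the Abelian groups of that order; the sharper feasibility constraint $m_3(G) \leq 3^{v_3(2n^2+1)} - 1$ combined with the lower bound from Lemma~\ref{lem:order3} dispatches $n \in [12, 29]$ in one sweep (for every such $n$ the $3$-adic valuation of $2n^2+1$ is too small to accommodate the required $m_3(G)$), while $n = 3$ is realized by $G = \Z_{19}$ with an explicit splitting such as $S = \{1, 7, 11\}$. I expect the main obstacles to be (i) the sub-range $n \in [4, 10]$, where the lower bound on $m_3(G)$ is vacuous, and (ii) the three leftover Abelian $3$-groups $G \in \{\F_3^3 \times \Z_9,\, \F_3 \times \Z_9^2,\, \F_3^2 \times \Z_{27}\}$ at $n = 11$ (all three survive the bound $m_3(G) \geq 11$); for both I would try to sharpen the difference-counting in Lemmas~\ref{lm:t=2-kp=2-difference1}--\ref{lm:t=2-kp=2-difference2} by exploiting the nontrivial action of multiplication by $2$ on $\Z_9$-factors, or else resort to a targeted finite search over the candidate splitter sets $S$.
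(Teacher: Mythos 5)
Your proposal follows essentially the same route as the paper: the same lower bound on $m_3(G)$ from Lemma~\ref{lem:order3}, the same upper bound $m_3(G)\leq \abs{G}/3-1$ for $G\not\cong\F_3^r$, the reduction of the elementary-abelian case to perfect ternary codes via Theorem~\ref{th:revperfectcode}, and the $3$-adic valuation argument that isolates $n=11$ (and kills $n=16$) in the range $[12,29]$. The residual cases you flag are closed in the paper exactly as you anticipate --- by finite computer searches over candidate splitter sets for $n\in\{5,\dots,10\}$ (where square-free order forces $G$ cyclic, or nearly so) and for the three non-elementary abelian $3$-groups of order $243$, plus a citation to Buzaglo--Etzion for $n=4$ --- so no sharpening of the difference-counting lemmas is needed.
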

    \begin{proof}
      Note that $\abs{\cB(n,2,2,0)}=2n^2+1$, which is odd.  By
      Lemma~\ref{lem:order3}, if we are to have a splitting
      $G=\set{1,2}\splt_2 S$, then
      \[ 2n^2+1 +2m_3(G) \geq 4n^2 - 19n-10.\]
      Rearranging we get,
      \begin{equation}
        \label{eq:mgeq220}
        m_3(G)\geq \frac{1}{2}(2n^2-19n-11).
      \end{equation}

      We now turn to look at $G$.  Write
      \[ G = \Z_{n_1}\times \Z_{n_2} \times \dots \times \Z_{n_\ell},\]
      where $n_1,\dots,n_\ell\geq 3$, and all of them are odd.  Then
      \[ m_3(G) = \parenv*{\prod_{i=1}^{\ell} (m_3(\Z_{n_i})+1)}-1.\]
      Since 
      \[ m_3(\Z_{n_i})=\begin{cases}
      2 & \text{$n_i$ is divisible by $3$,} \\
      0 & \text{otherwise,}
      \end{cases}\]
      if $G\neq \F_3^r$, then necessarily
      \begin{equation}
        \label{eq:mleq220}
        m_3(G)\leq \frac{1}{3}\abs{G}-1= \frac{2n^2-2}{3},
      \end{equation}
      which is attained by setting exactly one of the $n_i$ to be $9$,
      and the rest to be $3$.
  
      If we compare \eqref{eq:mgeq220} and \eqref{eq:mleq220}, then
      for $n\geq 30$ the lower bound of \eqref{eq:mgeq220} is greater
      than the upper bound of \eqref{eq:mleq220}, hence, only
      $G=\F_3^r$ is still possible. However, if $G=\F_3^r$, by
      Theorem~\ref{th:revperfectcode} we must then have a perfect
      $[n,k,5]$ linear code over $\F_3$, and such codes do not exist
      if $n\neq 11$ (e.g., see~\cite{MacSlo78}).
  
      For $11\leq n \leq 29$, \eqref{eq:mgeq220} implies $m_3(G) \geq
      \frac{1}{2}(n(2n-19)-11)\geq 11.$ Necessarily $27$
      divides $\abs{G}$. The only two possible cases are $n=11$ with
      $\abs{G}=243$, and $n=16$ with $\abs{G}=513$.
  
      We deal with the remaining cases separately:
      \begin{itemize}
      \item
        For $n=3$, $\abs{G}=19$, hence $G=\Z_{19}$.  A splitting set
        $S=\{1,11,7\}$ can be found in \cite[Theorem 6]{BuzEtz12}.
      \item
        For $n=4$, the non-existence is shown in \cite[Corollary
          6]{BuzEtz12}.
      \item
        For $n\in\{5,6,8,9, 10\}$, $\abs{G}$ is square-free, hence $G$ is
        cyclic. A computer search rules out these cases.
      \item 
        For $n=7$, $\abs{G}=99$, hence $G=\Z_{9}\times \Z_{11}$ (which is isomorphic to $\Z_{99}$) or
        $\F_{3}\times \Z_{33}$. A computer search rules out these two
        cases.
      \item For $n=11$, $\abs{G}=243$, with the following options. 
        \begin{itemize}
        \item
          $G=\F_3^5$, for which $m_3(G)=242$.
        \item
          $G=\Z_9\times\F_3^3$, for which $m_3(G)=80$.
        \item 
          $G=\Z_9\times\Z_9 \times \F_3$, for which $m_3(G)=26$.
        \item 
          $G=\Z_{27} \times \F_3^2$, for which $m_3(G)=26$.
        \item 
          $G=\Z_{27} \times \Z_9$, for which $m_3(G)=8$, contradicting
          \eqref{eq:mgeq220}.
        \item 
          $G=\Z_{81} \times \F_3$, for which $m_3(G)=8$, contradicting
          \eqref{eq:mgeq220}.
        \item 
          $G=\Z_{243}$, for which $m_3(G)=2$,
          contradicting \eqref{eq:mgeq220}.
        \end{itemize}
        A computer search rules out the groups $\Z_9\times\F_3^3$,
        $\Z_9\times\Z_9 \times \F_3$ and $\Z_{27} \times \F_3^2$.
        When $G=\F_3^5$, a $2$-splitting exists by
        Theorem~\ref{th:perfectcode} (see Example~\ref{ex:ex4}).
      \item
        For $n=16$, $\abs{G}=513=27\times 19$, hence $m_3(G)\leq 26$,
        contradicting \eqref{eq:mgeq220}. \qed
      \end{itemize}  
    \end{proof}
  
\section{Conclusion}
\label{sec:conclude}

In this paper we studied general tilings as well as lattice tilings of
$\Z^n$ with $\BALL$. These may act as perfect error-correcting codes
over a channel with at most $t$ limited-magnitude errors. We
constructed such lattice tilings from perfect codes in the Hamming
metric, and provided several non-existence results. We summarize some
of our non-existence results for lattice tilings and below, where it is
interesting to note the difference between the cases of
$\frac{t}{n}<\frac{1}{2}$ and $\frac{t}{n}\geq \frac{1}{2}$.

\begin{corollary}
Let $2\leq t < n/2$, and $\kp\geq \km\geq 0$ not both $0$.  Then
$\BALL$ cannot lattice-tile $\Z^n$ when one of the following holds:
\begin{enumerate}
\item $\frac{(\km+1)^2}{\kp+\km+1} \geq \binom{n}{t}^{1/t}$.
\item $t <n/4$, $\km=0$ and $\kp\geq  2 \binom{n}{t}-2$.
\item $t=2$, $\km=0$, $\kp=1$ and  $n\neq 5$.
\item  $t=2$, $\km=0$, $\kp=2$ and $n\neq 11$.
\end{enumerate}
\end{corollary}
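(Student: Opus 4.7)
The plan is to verify that each of the four sufficient conditions is an immediate consequence of a non-existence result already established in Section~\ref{sec:notiling}, after checking that the parameter hypothesis $2\leq t < n/2$ lines up correctly with the hypotheses of each invoked theorem. There is essentially no new work here; the corollary is a clean bookkeeping statement. Item~(1) is the contrapositive of Theorem~\ref{thm:lattice-sym}: that theorem requires $n\geq 2t$, which is implied by $t < n/2$ (indeed, by the stronger $2t\leq n-1$), so if $\BALL$ lattice-tiled $\Z^n$ we would necessarily have $\frac{(\km+1)^2}{\kp+\km+1} < \binom{n}{t}^{1/t}$, contradicting the hypothesis of item~(1). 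Item~(2) is precisely the statement of the earlier theorem about $\cB(n,t,\kp,0)$ under $2\leq t < n/4$ and $\kp\geq 2\binom{n}{t}-2$; one simply restates it.

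For items~(3) and~(4), the strategy is to combine the full classification theorems (Theorem~\ref{th:no210} and Theorem~\ref{th:no220}) with the ambient restriction $t < n/2$. For item~(3), putting $t=2$ makes the ambient condition $n\geq 5$. Theorem~\ref{th:no210} asserts that $\cB(n,2,1,0)$ lattice-tiles $\Z^n$ only when $n\in\{3,5\}$; intersecting with $n\geq 5$ leaves only $n=5$ as a possible tiling dimension, so $n\neq 5$ forces non-existence. For item~(4), again $t=2$ and the constraint $t<n/2$ gives $n\geq 5$. Theorem~\ref{th:no220} says $\cB(n,2,2,0)$ lattice-tiles $\Z^n$ only when $n\in\{3,11\}$; intersecting with $n\geq 5$ leaves only $n=11$, so $n\neq 11$ forces non-existence.

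The only point that requires any care is the boundary checking for items~(3) and~(4): the classification theorems are stated for $n\geq 3$, and they explicitly allow the small values $n=3$ as solutions, but the corollary's blanket hypothesis $2\leq t < n/2$ already excludes $n\in\{3,4\}$ when $t=2$, so no clash arises. There is no genuine obstacle in assembling the proof; the write-up will consist of four short sentences, one per item, each citing the appropriate earlier theorem and confirming the compatibility of hypotheses.
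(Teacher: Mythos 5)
Your proposal is correct and matches the paper's intent: the corollary appears in the concluding section as a summary of the earlier non-existence results, with no separate proof given, and each item follows exactly as you describe (item (1) from Theorem~\ref{thm:lattice-sym} via $t<n/2\Rightarrow n\geq 2t$, item (2) verbatim from the $\km=0$ theorem, and items (3)--(4) from the classifications in Theorems~\ref{th:no210} and~\ref{th:no220} after noting $t=2<n/2$ forces $n\geq 5$). Your boundary check that $n\in\{3,4\}$ is already excluded by the ambient hypothesis is the only point of care, and you handled it correctly.
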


\begin{corollary}
Let  $2\leq  t < n\leq 2t$, and $\kp\geq \km\geq 0$ not both $0$. If $\BALL$ lattice-tiles $\Z^n$, then one of the following holds:
\begin{enumerate}
\item $\km=0$ and one of the following holds:
 \begin{enumerate}
  \item $t=n-1$(such tilings  have been constructed in \cite{Ste90,BuzEtz12});
  \item $(2n-2)/3 \leq t \leq n-3$ and $\kp=1$\footnote{Recall that the entire case of $t=n-2$ has been excluded in  \cite{BuzEtz12}.};
  \item $n/2\leq t<(2n-2)/3$;
 \end{enumerate}
\item $\kp=\km$ and one of the following holds:
 \begin{enumerate}
  \item $(4n-2)/5 \leq t \leq n-1$ and $\kp=\km=1$;
  \item $n/2 \leq t < (4n-2)/5$ and $\sum_{i=1}^t{n\choose i} (2\kp)^{i-1} \geq (\kp+1)^{t}$.
 \end{enumerate}
\end{enumerate}
\end{corollary}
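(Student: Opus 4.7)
The plan is to derive this corollary by a clean case analysis that repackages the non-existence theorems of Section~\ref{sec:notiling} (together with results from \cite{BuzEtz12,Ste90}), exploiting the fact that the hypothesis $t<n\le 2t$ is exactly what is needed to invoke Theorem~\ref{thm:nge2t-1}.

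First, since $2t\ge n\ge t+1$, Theorem~\ref{thm:nge2t-1} rules out any tiling (in particular any lattice tiling) of $\Z^n$ by $\BALL$ whenever $\kp>\km>0$. Combined with the standing hypothesis $\kp\ge\km\ge 0$ not both zero, this forces the dichotomy asserted in the statement: either $\km=0$ (case~(1)) or $\kp=\km\ge 1$ (case~(2)).

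For case~(1), with $\km=0$, I would split on $t$. The boundary $t=n-1$ is precisely the chair/notched-cube situation recalled in the Introduction and constitutes subcase~1(a). The value $t=n-2$ is excluded outright by the result of \cite{BuzEtz12} quoted in the Introduction, so it cannot occur. For $(2n-2)/3\le t\le n-3$, the theorem asserting that $\cB(n,t,\kp,0)$ cannot lattice-tile $\Z^n$ when $\kp\ge 2$ forces $\kp=1$, which is subcase~1(b). Finally, for $n/2\le t<(2n-2)/3$ no further restriction is imposed by any theorem in the paper, and we land in subcase~1(c).

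For case~(2), with $\kp=\km\ge 1$, an analogous split on $t$ completes the argument. For $(4n-2)/5\le t\le n-1$ the theorem of Section~\ref{sec:notiling} showing that $\BALL$ does not tile $\Z^n$ when $\kp=\km\ge 2$ forces $\kp=\km=1$, which is subcase~2(a). For $n/2\le t<(4n-2)/5$, I would apply the first lattice non-existence theorem of Section~\ref{sec:notiling}, namely $\sum_{i=1}^t\binom{n}{i}(\kp+\km)^{i-1}\ge(\km+1)^t$, and specialize to $\kp=\km$ to obtain $\sum_{i=1}^t\binom{n}{i}(2\kp)^{i-1}\ge(\kp+1)^t$, which is precisely subcase~2(b).

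The only real obstacle is bookkeeping rather than mathematics: one has to verify that the enumerated ranges of $t$ jointly cover $\{t\in\Z : n/2\le t\le n-1\}$ and that the hypotheses of each invoked theorem are satisfied in the claimed range (in particular, confirming that the boundary value $t=n-2$ for $\km=0$ is genuinely forbidden and that the ranges $(2n-2)/3$ and $(4n-2)/5$ correctly abut the already-known tiling at $t=n-1$). No new analytic work is required beyond assembling the pieces already proved in Section~\ref{sec:notiling}.
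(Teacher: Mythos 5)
Your proposal is correct and assembles the corollary exactly as the paper intends: Theorem~\ref{thm:nge2t-1} (applicable precisely because $t+1\le n\le 2t$) forces the dichotomy $\km=0$ or $\kp=\km$, the $t=n-2$, $\km=0$ case is killed by the result of \cite{BuzEtz12}, the geometric theorem for $\frac{2}{3}(n-1)\le t\le n-3$ yields 1(b), the $\kp=\km\ge 2$ theorem for $t\ge(4n-2)/5$ yields 2(a), and the first lattice non-existence bound specializes to 2(b); the remaining subcases are just the leftover ranges of $t$. The paper states this corollary without proof as a summary, and your bookkeeping of the range coverage is the only verification needed.
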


It is also interesting to compare the results here, when $t\geq 2$,
with the known results for $t=1$. The non-existence results we have
here rely heavily on geometric arguments, or general algebraic
arguments. The notable exceptions are Theorem~\ref{th:no210} and
Theorem~\ref{th:no220}, which carefully study the structure of the
group being split. This is in contrast with the strong non-existence
results when $t=1$, due to the fact that when $t=1$, if $G$ is split
then so is the cyclic group of the same size, $\Z_{\abs{G}}$. This
does not hold when $t\geq 2$, as evident, for example, during the
proof of Theorem~\ref{th:no210}, where $\F_2^4$ is $2$-split but
$\Z_{16}$ is not.

Whether some strong statement may be said about the structure of the
group being split, remains as an open question for further
research. It is also interesting to ask whether more $t$-splittings
exist, namely, whether $t$-splittings exist which are not derived from
perfect codes in the Hamming metric. Finally, it remains open whether
any other non-lattice tilings of $\BALL$ exist.

\end{document}